\documentclass[10pt]{amsart}
\usepackage{amsmath,amsfonts,amssymb,amsthm,amscd,comment,euscript}
\usepackage[all]{xy}
\usepackage{graphicx}
\usepackage{mathtools}
\usepackage{enumerate} 
\usepackage[colorlinks=true]{hyperref} 
\usepackage[usenames,dvipsnames]{xcolor}
\usepackage{enumitem,multicol}
\usepackage{fancyhdr}
\usepackage[yyyymmdd,hhmmss]{datetime}

\addtolength{\evensidemargin}{-1cm}
\addtolength{\oddsidemargin}{-1cm}
\addtolength{\textwidth}{2cm}

\pagestyle{plain}
\rfoot{Compiled on \today\ at \currenttime}
\cfoot{}
\lfoot{Page \thepage}
\swapnumbers
\theoremstyle{plain}
\newtheorem{lem}{Lemma}[section]

\newtheorem{cor}[lem]{Corollary}
\newtheorem{prop}[lem]{Proposition}
\newtheorem{thm}[lem]{Theorem}

\theoremstyle{definition}

\newtheorem{rem}[lem]{Remark}
\newtheorem{dfn}[lem]{Definition}

\newcommand{\w}{\wedge}
\newcommand{\ZF}{\operatorname{\mathbb{Z}F}}
\newcommand{\PP}{\mathbb{P}}

\newcommand{\Sm}{\mathrm{Sm}}
\newcommand{\A}{\mathbb{A}}
\newcommand{\Os}{\mathcal{O}}
\newcommand{\Z}{\mathbb{Z}}

\newcommand{\Gm}{\mathbb{G}_m}

\newcommand{\colim}{\operatorname{colim}}
\newcommand{\Hom}{\operatorname{Hom}}
\newcommand{\Ext}{\operatorname{Ext}}
\newcommand{\MW}{\mathrm{MW}}
\newcommand{\HZF}{\mathrm{H}_0^{\ZF}}
\newcommand{\HCor}{\mathrm{H}_0^{\MW}}
\newcommand{\SH}{\mathrm{SH}}
\newcommand{\Fr}{\operatorname{Fr}}
\newcommand{\Frn}{\Fr^{\mathcal{V}}}
\newcommand{\hFrn}{\overline{\Fr}^{\mathcal{V}}}

\newcommand{\coker}{\operatorname{coker}}
\newcommand{\Shvb}{\mathrm{Shv}_{\bullet}}

\newcommand{\Spec}{\operatorname{Spec }}
\newcommand{\GW}{\mathrm{GW}}
\newcommand{\id}{\mathrm{id}}

\newcommand{\KstarMW}{\mathrm{K}_*^{\mathrm{MW}}}
\newcommand{\KMW}{\mathrm{K}^{\mathrm{MW}}}
\newcommand{\wCor}{\widetilde{\operatorname{Cor}}}
\newcommand{\Ab}{\mathrm{ShvAb}}
\newcommand{\ShvMW}{\mathrm{ShvAb}_k^\MW}
\newcommand{\Shvfr}{\mathrm{ShvAb}_k^\mathrm{fr}}
\newcommand{\HM}{\mathrm{\Pi}_*(k)}
\newcommand{\HMfr}{\mathrm{\Pi}_*^{\mathrm{fr}}(k)}
\newcommand{\Phifr}{\Phi^{\mathrm{fr}}}
\newcommand{\Psifr}{\Psi^{\mathrm{fr}}}
\newcommand{\HMCor}{\mathrm{\Pi}_*^{\mathrm{MW}}(k)}
\newcommand{\wDM}{\widetilde{\operatorname{DM}}}

\author{Alexey Ananyevskiy}
\address{St. Petersburg Department, Steklov Math. Institute, Fontanka 27, St. Petersburg 191023 Russia, and Chebyshev Laboratory, St. Petersburg State University, 14th Line V.O., 29B, St. Petersburg 199178 Russia}
\email{alseang@gmail.com}
\author{Alexander Neshitov}
\address{Department of Mathematics, University of Southern California,
3620 S. Vermont ave, Los Angeles, CA, 90089}
\email{neshitov@usc.edu}
\title{Framed and MW-transfers for homotopy modules}

\keywords{Homotopy modules, framed correspondences, Milnor-Witt correspondences}

\subjclass[2010]{14F42}

%

\thanks{The first author is supported by Young Russian Mathematics award, by "Native towns", a social investment program of PJSC "Gazprom Neft", and by RFBR grant 18-31-20044.}

\begin{document}

\begin{abstract}
In the paper we use the theory of framed correspondences to construct Milnor-Witt transfers on homotopy modules. As a consequence we identify the zeroth stable $\A^1$-homotopy sheaf of a smooth variety with the zeroth homology of the corresponding MW-motivic complex and prove that the hearts of the homotopy $t$-structures on the stable $\A^1$-derived category and the category of Milnor-Witt motives are equivalent. We also show that a homotopy invariant stable linear framed Nisnevich sheaf admits a unique structure of a presheaf with MW-transfers compatible with the framed structure. 

\end{abstract}
\maketitle
\section{Introduction}
The theory of framed correspondences was introduced by Voevodsky in~\cite{V} and developed by Garkusha and Panin \cite{GP,GPhi}. This theory can be seen as an enhancement of Voevodsky's theory of presheaves with transfers \cite{MVW06} adjusted to the setting of motivic homotopy theory. Presheaves with transfers have wrong-way maps (transfers) along finite surjective morphisms $Z\to X$; for framed presheaves one has transfers along finite surjective morphisms $Z\to X$ equipped with the additional data: (1) a factorization $Z\xrightarrow{i} \mathbb{A}^n\times X\xrightarrow{p} X$, where $i$ is a regular embedding and $p$ is the projection, and (2) trivialization of the normal bundle $N_i$. Every presheaf on the category of smooth varieties over a field that can be factored through the motivic stable homotopy category has a canonical structure of a presheaf with framed transfers. 

Calm\`es and Fasel introduced another enhancement of presheaves with transfers, the so-called presheaves with Milnor--Witt transfers \cite{CF}. These presheaves have wrong-way maps along the finite surjective maps equipped with a certain unramified quadratic datum (that endows the morphism with a kind of an orientation). The theory was subsequently developed in \cite{CF2,DF,FO}.

In this paper we study the relations between the theories based on framed and Milnor--Witt correspondences over an infinite perfect field $k$ of characteristic different from $2$. We construct a ``forgetful'' functor $\ZF_*(k)\to \wCor(k)$ (see Proposition~\ref{prop:functorcom}; there is another construction given in~\cite{DF}) and obtain the following comparison results.

\begin{thm}[{Theorem~\ref{hcor=hzf}}]
For a smooth variety $X$ the forgetful functor induces an isomorphism $\HZF(X)\cong \HCor(X)$ of the zeroth $\ZF$ and $\MW$-homology sheaves.
\end{thm}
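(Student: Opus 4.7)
The plan is to produce an inverse $\Psi$ to the forgetful-induced map $\Phi: \HZF(X) \to \HCor(X)$ using the main $\MW$-transfer construction theorem of the paper (the third result in the abstract), and then to verify both compositions are identities by appealing to the universal properties of $\HZF(X)$ and $\HCor(X)$. Since $\HZF(X)$ is a homotopy invariant stable linear framed Nisnevich sheaf by Garkusha-Panin, the main theorem endows it with a canonical structure of a sheaf with $\MW$-transfers, compatible with its framed structure. The sheaf $\HCor(X)$ represents the functor $\mathcal{F} \mapsto \mathcal{F}(X)$ on strictly $\A^1$-invariant Nisnevich sheaves with $\MW$-transfers, so the class $[\sigma_X] \in \HZF(X)(X)$ of the canonical identity framed correspondence determines a unique morphism $\Psi: \HCor(X) \to \HZF(X)$ of sheaves with $\MW$-transfers sending $[\id_X]$ to $[\sigma_X]$.

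For $\Phi \circ \Psi = \id$, the composition is a self-morphism of the $\MW$-transfer sheaf $\HCor(X)$, so by universality it is determined by its value on $[\id_X]$. A direct computation gives $\Phi([\sigma_X]) = [\id_X]$, since the forgetful functor tautologically sends the identity framed correspondence to the identity $\MW$-correspondence. For $\Psi \circ \Phi = \id$, I would appeal to the analogous universal property of $\HZF(X)$ as the free $\A^1$-invariant framed Nisnevich sheaf generated by $X$. The composition is a morphism of framed sheaves: $\Phi$ is framed by construction, and $\Psi$ is automatically framed by the uniqueness clause of the main theorem, since the $\MW$-transfer structure on $\HZF(X)$ is the unique one extending its framed structure. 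The composition therefore fixes the generator $[\sigma_X]$, and hence equals the identity.

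The principal technical difficulty is the compatibility between the framed and $\MW$-transfer structures on $\HZF(X)$ — specifically, the automatic framed-ness of $\Psi$ used in the second check. This rests on the uniqueness clause of the main $\MW$-transfer construction theorem, which is where the core technical work of the paper is concentrated; without uniqueness, one would be left checking the framed-compatibility of $\Psi$ by hand on explicit generators of $\HCor(X)$, which is precisely the kind of calculation the main theorem was designed to eliminate.
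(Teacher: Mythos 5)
Your overall strategy --- produce the inverse $\Psi$ as the unique morphism of sheaves with MW-transfers sending $1_X$ to the class $\overline{\id}_X\in\HZF(X)(X)$, i.e.\ $\Psi=(-\cup\overline{\id}_X)$, and then check both composites on generators --- is exactly the paper's. But there is a genuine circularity in how you justify the two key compatibilities. The ``main MW-transfer construction theorem'' you invoke (Corollary~\ref{cor:addmw}, the third result of the abstract) is \emph{deduced from} Theorem~\ref{hcor=hzf} in this paper: its proof reduces the construction of $\wCor(-,X)\times M(X)\to M$ to a map $\HCor(X)\times M(X)\to M$ and then uses the isomorphism $\HZF(X)\cong\HCor(X)$ to produce it. In particular the uniqueness clause, which you use to conclude that ``$\Psi$ is automatically framed,'' is not available at this point. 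Your closing remark --- that without uniqueness one would have to check the framed-compatibility of $\Psi$ by hand, ``which is precisely the kind of calculation the main theorem was designed to eliminate'' --- has the logical order backwards: that hand calculation is unavoidable here, and the main theorem is a corollary of it, not a substitute for it.

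What is actually available non-circularly is the following. The MW-action on $\HZF(X)$ does not come from Corollary~\ref{cor:addmw} but from the cup-product pairing of Definitions~\ref{coractiondef} and~\ref{corpairing} applied to the framed homotopy module $\HZF(X)_*$ (which exists by the cancellation theorem of \cite{AGP}); this is the functor $\Psi^{\MW}$ on homotopy modules, whose construction occupies Sections~\ref{pfmocohm}--\ref{mwohm}. The framedness of $\Psi=(-\cup\overline{\id}_X)$ is then exactly Lemma~\ref{cortozfonfr}, the statement that $a^*(\alpha\cup m)=a^*(\alpha)\cup m$ for a framed correspondence $a$; and the MW-linearity of $\Phi$ (which you also need, tacitly, to treat $\Phi\circ\Psi$ as a self-morphism of the MW-sheaf $\HCor(X)$ determined by its value on $1_X$) rests on Lemma~\ref{cortozfoncor} together with the identity axiom Lemma~\ref{idok}. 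Once you replace the appeal to the uniqueness clause by these direct compatibility lemmas, your argument becomes the paper's proof; as written, it assumes a statement that logically depends on the theorem being proved.
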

\noindent Recall that $\pi_0(\Sigma_{\PP^1}^\infty X_+)_0\cong\HZF(X)$ by~\cite[Theorem~11.1 and Corollary~11.3]{GP}, so this theorem provides a reasonable description for the zeroth stable motivic homotopy sheaf of $X$. A similar answer with $X$ being a smooth projective variety was obtained by different techniques in \cite[Theorem~4.3.1]{AH} and \cite[Theorem~1.2]{Ananyevskiy}, see also \cite[Theorem~1.3]{Ananyevskiy} for the case of a smooth curve.

\begin{cor}[Corollary~\ref{cor:addmw}]
Let $M$ be a homotopy invariant stable $\ZF_*$-sheaf. Then $M$ admits a unique structure of a presheaf with MW-transfers compatible with the framed structure.
\end{cor}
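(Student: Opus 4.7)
The plan is to derive the corollary directly from Theorem~\ref{hcor=hzf}, using the standard fact that for any homotopy invariant stable $\ZF_*$-sheaf $M$ the action of $\ZF_*(Y,X)$ on $M(X)$ factors through the sheafified $H_0$: if $\alpha\in\ZF_*(Y,X)$ is sent to zero in $\HZF(X)$, then the induced map $M(X)\to M(Y)$ vanishes. This factorization is what allows the sheaf-level identification $\HZF\cong\HCor$ to be promoted to extra structure on $M$.

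Categorically, equipping $M$ with an MW-transfer structure compatible with its framed structure amounts to lifting the additive presheaf $M\colon\ZF_*(k)^{op}\to\mathrm{Ab}$ along the forgetful functor $\ZF_*(k)\to\wCor(k)$ of Proposition~\ref{prop:functorcom} to an additive presheaf on $\wCor(k)^{op}$. To construct such a lift, for $\beta\in\wCor(Y,X)$ I would work locally on $Y$: after passage to a Nisnevich cover, the class of $\beta$ in $\HCor(X)$ is pulled back, via the isomorphism of Theorem~\ref{hcor=hzf}, to a class in $\HZF(X)$ represented by a linear framed correspondence $\alpha$, and I define the MW-action by $\beta\cdot m := \alpha\cdot m$. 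Local well-definedness is clear from the factorization through $\HZF$ recalled above, and since $M$ is a Nisnevich sheaf, these local actions glue into a global map $M(X)\to M(Y)$. Uniqueness is then automatic: any compatible MW-structure must agree with this one on the image of $\ZF_*(Y,X)$, which covers all of $\HCor(X)$ by Theorem~\ref{hcor=hzf}; additivity follows because the $H_0$-identification is an isomorphism of sheaves of abelian groups.

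The main obstacle I expect is verifying the composition law, $(\beta_2\circ\beta_1)\cdot m = \beta_2\cdot(\beta_1\cdot m)$, for composable MW-correspondences. Given local framed lifts $\alpha_i$ of $\beta_i$, the functoriality of $\ZF_*(k)\to\wCor(k)$ ensures that $\alpha_2\circ\alpha_1$ is itself a framed lift of $\beta_2\circ\beta_1$ at the level of $H_0$-classes, so the composition law for the constructed MW-action reduces to the $\ZF_*$-functoriality of $M$. The technical heart of the argument is the sheaf-level isomorphism $\HZF\cong\HCor$ of Theorem~\ref{hcor=hzf}; without it, a general MW-correspondence would not admit Nisnevich-local framed lifts and the entire construction would collapse. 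The remaining delicate points --- compatibility of local lifts on overlaps and with composition --- are essentially formal once the sheaf-level identification is in hand.
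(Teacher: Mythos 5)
Your proposal is correct and follows essentially the same route as the paper: both arguments reduce the construction and the uniqueness to the fact that the framed action of $\ZF_*(-,X)$ on $M$ factors (by homotopy invariance, stability, and the sheaf property) through the sheaf $\HZF(X)$, and then invoke the isomorphism $\HZF(X)\cong\HCor(X)$ of Theorem~\ref{hcor=hzf} to transport the pairing to $\wCor(-,X)$. Your explicit discussion of Nisnevich-local framed lifts, gluing, and the composition law just spells out details that the paper's proof leaves implicit.
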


\begin{thm}[{Theorem~\ref{hmodeq}}]
The forgetful functors induce equivalences
\[
\HMCor  \xrightarrow{\simeq} \HMfr \xrightarrow{\simeq} \HM
\]
between the categories of homotopy modules, framed homotopy modules and homotopy modules equipped with MW-transfers. 
\end{thm}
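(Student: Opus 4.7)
The plan is to combine Corollary~\ref{cor:addmw} on the uniqueness of MW-transfers compatible with a given framed structure with the fact that every homotopy module carries a canonical framed structure. Both forgetful functors will turn out to be equivalences for essentially the same reason: the extra transfer structures are determined, up to unique isomorphism, by the underlying $\Gm$-stable family of $\A^1$-invariant sheaves.

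For the second arrow $\HMfr \xrightarrow{\simeq} \HM$ I would invoke the identification, due to Garkusha--Panin, of $\HM$ with the heart of the homotopy $t$-structure on $\SH(k)$. Every object of $\SH(k)$ has a canonical structure of a linear framed presheaf, so each component $M_n$ of a homotopy module inherits a framed structure, and the bonding isomorphisms $M_n \cong (M_{n+1})_{-1}$ become morphisms of framed sheaves by naturality. This yields a section of the forgetful functor. Full faithfulness reduces to showing that morphisms of the underlying homotopy modules are automatically framed, which follows from the canonicity of the framed enhancement on objects of $\SH(k)$ restricted to the heart.

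For the first arrow $\HMCor \xrightarrow{\simeq} \HMfr$, starting from a framed homotopy module $(M_n)_{n \in \Z}$ I would apply Corollary~\ref{cor:addmw} componentwise to obtain a unique MW-transfer structure on each $M_n$ compatible with its framed structure. The key step is to verify that the bonding isomorphism $M_n \cong (M_{n+1})_{-1}$ is a morphism of presheaves with MW-transfers. For this, observe that the Voevodsky contraction of an MW-presheaf carries a canonical MW-structure induced by contracting transfers, so $(M_{n+1})_{-1}$ has two a priori MW-structures -- one contracted from $M_{n+1}$ and one transported from $M_n$. Both are compatible with the same underlying framed structure on $(M_{n+1})_{-1}$, so the uniqueness clause of Corollary~\ref{cor:addmw} forces them to coincide. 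The same uniqueness argument shows that every morphism of framed homotopy modules automatically respects the induced MW-transfers, yielding full faithfulness; essential surjectivity is clear since the forgetful functor acts as identity on underlying sheaves.

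The main obstacle is the compatibility check: one needs to know that the framed structure on $(M_{n+1})_{-1}$ obtained by $\Gm$-contraction agrees with the framed structure on $M_n$ transported along the bonding isomorphism. This should follow from the naturality of the canonical framed enhancement for $\Gm$-loops, but in concrete terms it amounts to matching the framed transfers built from the structure of $M_{n+1}$ against those built from the structure of $M_n$, and verifying this cleanly is where the bulk of the technical work goes. Once this compatibility is established, both equivalences fall out as formal consequences of Corollary~\ref{cor:addmw} and the Garkusha--Panin framed enhancement theorem.
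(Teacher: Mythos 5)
Your overall strategy matches the paper's: the second equivalence comes from the canonical framed enhancement of objects of $\SH(k)$, and the first from the uniqueness of MW-transfers compatible with a given framed structure. But there is one genuinely under-justified step in your treatment of $\HMfr\to\HM$. You claim full faithfulness "follows from the canonicity of the framed enhancement on objects of $\SH(k)$ restricted to the heart." A framed homotopy module carries a \emph{chosen} framed structure which a priori need not coincide with the canonical one coming from its Eilenberg--MacLane spectrum; to conclude that morphisms of underlying homotopy modules automatically respect the framed transfers you must first prove $\Psifr\circ\Phifr\cong\id$, i.e.\ that the given structure always \emph{is} the canonical one. This does not follow formally from naturality: in the paper it is the substantive content of Proposition~\ref{unfrstr}, proved by resolving an arbitrary framed homotopy module by sums of $\HZF(X_j)_{*+n_j}$ (Lemma~\ref{frhmsur}) and checking the statement on the generators $\HZF(X)_*$ (Lemma~\ref{onhzfsame}). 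If you intend to cite Proposition~\ref{unfrstr} rather than reprove it, the gap disappears — the paper's own proof of the theorem does exactly that — but as a free-standing argument your sketch is missing the key input.

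For the first arrow your route differs slightly from the paper's and the difference is worth noting. You apply Corollary~\ref{cor:addmw} componentwise and then force the bonding maps to be MW-morphisms by comparing two MW-structures on $(M_{n+1})_{-1}$ and invoking uniqueness. This works, but it requires the additional (unproved in your sketch) observation that the contraction of a presheaf with MW-transfers, with its contracted MW-structure, is compatible with the contracted framed structure — i.e.\ that $\Phi^{\MW}$ commutes with $(-)_{-1}$. The paper sidesteps this entirely: the MW-action is constructed as a pairing $\wCor(-,X)\times M_*(X)\to M_*$ built from external products and pushforwards that is natural in the homotopy module $M_*$ (Definitions~\ref{coractiondef} and~\ref{corpairing}), so the bonding isomorphisms $\phi_i$ are MW-morphisms automatically and $\Psi^{\MW}$ descends to $\HMCor$ with no separate compatibility check. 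Your uniqueness argument for full faithfulness of $\HMCor\to\HMfr$ is the same as the paper's, which reduces everything to the fact that the descended pairing $\HCor(X)\times M_*(X)\to M_*$ is determined by the framed structure via the isomorphism $\HCor(X)\cong\HZF(X)$ of Theorem~\ref{hcor=hzf}.
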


\noindent 
The category of homotopy modules \cite[Section~5.2]{Morelintro} can be identified with the heart of the homotopy $t$-structure on the stable $\A^1$-derived category (see, for example, \cite[Theorem~3.3.3]{AH}). In a similar way one identifies the category $\HMCor$ with the heart of the homotopy $t$-structure on the category of $\MW$-motives $\wDM(k)$. Thus we have the following corollary.

\begin{cor}[{Corollary~\ref{prop:hearts}}]
The forgetful functor $\wDM(k)\to \mathrm{D}_{\A^1}(k)$ induces an equivalence of the hearts of the homotopy $t$-structures.
\end{cor}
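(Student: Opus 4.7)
The plan is to reduce the statement to Theorem~\ref{hmodeq} by first identifying the two hearts with categories of homotopy modules and then checking that the forgetful functor on triangulated categories restricts on hearts to the forgetful functor $\HMCor\to\HM$ that Theorem~\ref{hmodeq} already shows to be an equivalence.

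First I would invoke the identification of the heart of the homotopy $t$-structure on $\mathrm{D}_{\A^1}(k)$ with $\HM$, which is \cite[Theorem~3.3.3]{AH}, and its MW-motivic analogue identifying the heart of the homotopy $t$-structure on $\wDM(k)$ with $\HMCor$. The latter is obtained by mimicking the argument of loc.\ cit., using that $\wDM(k)$ is compactly generated by the motives $\widetilde{M}(X)\otimes\Gm^{\w n}$ for $X\in\Sm/k$ and $n\in\Z$, and that for a strictly $\A^1$-invariant Nisnevich sheaf with MW-transfers the underlying sheaf is already strictly $\A^1$-invariant in the usual sense; taking $\pi_0^{\mathrm{MW}}$-sheaves of a homotopy bounded-below MW-motive then produces a $\Z$-graded collection of such sheaves glued by the contraction isomorphisms, i.e.\ a homotopy module with MW-transfers.

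Next I would verify that the forgetful functor $\wDM(k)\to\mathrm{D}_{\A^1}(k)$ is $t$-exact for the homotopy $t$-structures and that, under the above identifications of the hearts, it corresponds to the forgetful functor $\HMCor\to\HM$. The $t$-exactness is immediate because both $t$-structures are characterized, on compact generators, by vanishing of the same Nisnevich cohomology sheaves in positive (resp.\ negative) degrees: the forgetful functor merely forgets the MW-transfer structure on the underlying complex of Nisnevich sheaves, without altering the sheaves themselves. Consequently the induced functor on hearts sends an MW-homotopy module to its underlying homotopy module, which is exactly the forgetful functor appearing in Theorem~\ref{hmodeq}.

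Finally, since Theorem~\ref{hmodeq} asserts that $\HMCor\to\HM$ is an equivalence, combining it with the two identifications of hearts gives the desired equivalence, proving the corollary. The only potentially delicate point is the second step, namely matching the two forgetful functors at the level of hearts; once the $t$-exactness and the compatibility of the two descriptions of the hearts are in place, the corollary follows formally from Theorem~\ref{hmodeq}, which carries all the substantive content.
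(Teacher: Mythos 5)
Your proposal is correct and follows essentially the same route as the paper: the paper's proof is the single line ``Follows from Theorem~\ref{hmodeq},'' preceded by a remark identifying the heart of $\mathrm{D}_{\A^1}(k)$ with $\HM$ via \cite[Theorem~3.3.3]{AH} and asserting that the heart of $\wDM(k)$ is identified with $\HMCor$ ``in a similar way.'' Your additional care in checking $t$-exactness of the forgetful functor and matching it with $\Phi^{\MW}\circ\Phifr$ on hearts only makes explicit what the paper leaves implicit.
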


The paper is organized as follows. In section~\ref{hiss} we recall the main definitions of the theory of framed correspondences and list important properties of presheaves with framed transfers that follow from~\cite{GPhi}. In section~\ref{fchm} we construct framed transfers on a homotopy module and prove that every homotopy module has a unique structure of a stable $\ZF_*$-sheaf such that the suspension isomorphisms agree with framed transfers. In section~\ref{ftns} we provide a description of framed transfers via a trivialization of the conormal sheaf for the support (see Corollary~\ref{lem:frameeq}). In sections~\ref{pfmocohm} and~\ref{sect:pushgen} we use the framed structure on homotopy modules to construct the pushforward map on cohomology groups of homotopy modules. For a map $f\colon X\to Y$ of smooth varieties with a closed subset $Z\subset X$ finite over $Y$ we locally construct a framed enhancement for $f$ giving a wrong-way framed correspondence then use Cousin complex to glue the induced maps into a pushforward map from the twisted cohomology of $X$ with support on $Z$ to the cohomology of $Y$ (Definition~\ref{dfn:pushc}). The resulting pushforward maps coincide with the transfer maps constructed in~\cite[\S 3,4]{Morel}, but we need the framed interpretation to prove a special case of the base change theorem~\ref{basechange2}. In section~\ref{mwc} we recall the main definitions of the theory of Milnor--Witt correspondences introduced in~\cite{CF} and discuss the natural action of the framed correspondences on the Milnor--Witt ones. Then we use the pushforward maps introduced in section~\ref{sect:pushgen} to define an action of Milnor--Witt correspondences on the cohomology groups of homotopy modules. In Proposition~\ref{prop:functorcom} we construct a functor from the category of linear framed correspondences to the category of $\MW$-correspondences. Such functor was also constructed in~\cite{DF}, but our approach is a different one and is based on the natural action of framed correspondences on the cohomology of Milnor--Witt K-theory. In the section~\ref{mwohm} we prove that every homotopy module has a unique structure of a presheaf with Milnor--Witt transfers that agrees with the framed structure. Then we derive the two main results of the article: establish an isomorphism between the zeroth stable motivic homotopy sheaf of $X$ with the zeroth homology of the $\MW$-motive of $X$ and show that the categories of homotopy modules, homotopy modules with $\MW$-transfers and framed homotopy modules are equivalent.

Throughout the paper we employ the following notation.
\vspace{0.05in}

\begin{tabular}{l|l}
	$k$ & an infinite perfect field of $\operatorname{char} \neq 2$ \\
	$\Sm_k$ & the category of smooth quasi-projective schemes over $k$\\
	$\Shvb$ & the category of pointed Nisnevich sheaves of sets on $\Sm_k$\\
	$\Ab$ & the category of Nisnevich sheaves of abelian groups on $\Sm_k$\\ 
	$X_+$ & the pointed scheme $X\sqcup\Spec k$ considered as an object of $\Shvb$\\
	$\PP^{1}$ & the pointed sheaf represented by the pointed scheme $(\PP^1,\infty)$\\
	$T$ & the quotient $\A^1/\Gm$ considered as an object of $\Shvb$\\
	$\sigma\colon \PP^{1}\to T$ & the composition $\PP^{1}\to \PP^1/(\PP^{1}-0) \cong \A^1/\Gm = T$ \\
	$\PP^{\wedge n}, T^n$ & the $n$-fold smash power of the respective objects of $\Shvb$ \\
	$\sigma^n\colon\PP^{\w n}\to T^n$ & the $n$-fold smash power of $\sigma$ \\
	sheaf & Nisnevich sheaf
\end{tabular}
\vspace{0.05in}
\noindent

\subsection{Acknowledgements} This work was initiated during the program "Algebro-Geometric and Homotopical Methods" at the Institut Mittag-Leffler in 2017. Authors would like to thank the organizers Eric Friedlander, Lars Hesselholt, and Paul Arne \O stvaer.
\section{Homotopy invariant stable $\ZF_*$-sheaves}\label{hiss}

The notion of framed correspondences was introduced by Voevodsky in~\cite{V} and studied by Garkusha and Panin in~\cite{GP}. We briefly recall the main definitions and constructions and refer the reader to~\cite{GP} and~\cite{GPhi} for the details.

\begin{dfn}
For $X,Y\in \Sm_k$ and closed subsets $A\subset X$ and $B\subset Y$ let $\Fr(X/A,Y/(Y-B))$ be the set of equivalence classes of triples $(Z,U,g)$ where 
\begin{enumerate}
\item 
$Z\subset X$ is a closed subset such that $Z\cap A=\emptyset$, 
\item
$U$ is an \'etale neighborhood of $Z$ in $X$,
\item
$g\colon U\to Y$ is a regular morphism such that $Z=g^{-1}(B)$. 
\end{enumerate}
The triples $(Z,U,g)$ and $(Z',U',g')$ are said to be equivalent if $Z=Z'$ and there is an \'etale neighborhood $V$ of $Z$ in $U\times_X U'$ such that $g$ and $g'$ coincide on $V.$
\end{dfn}

\begin{lem}[{Voevodsky; see also \cite[Corollary~3.3]{GP}}]\label{Vlemma}
For $X,Y\in \Sm_k$ and closed subsets $A\subset X$ and $B\subset Y$ there is a canonical bijection
\[
\Hom_{\Shvb}(X/A,Y/(Y-B))\cong \Fr(X/A,Y/(Y-B)).
\]
\end{lem}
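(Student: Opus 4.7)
The plan is to construct inverse maps between $\Fr(X/A, Y/(Y-B))$ and $\Hom_{\Shvb}(X/A, Y/(Y-B))$ by exploiting the Nisnevich-local structure of the cofiber sheaf $Y/(Y-B)$.

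For the map $\Fr \to \Hom$, I would start with a triple $(Z, U, g)$ and observe that $\{U \to X,\ X\setminus Z \hookrightarrow X\}$ is a Nisnevich cover of $X$ (because $U \to X$ is \'etale and restricts to an isomorphism over $Z$, and $X\setminus Z$ is the open complement). On $U$ use the composite $U \xrightarrow{g} Y \to Y/(Y-B)$, and on $X\setminus Z$ use the constant basepoint. These agree on the overlap $U\times_X(X\setminus Z) = U\setminus Z$ because $Z = g^{-1}(B)$ forces $g(U\setminus Z)\subset Y-B$, which is collapsed to the basepoint in $Y/(Y-B)$. Since $Z\cap A = \emptyset$ gives $A\subset X\setminus Z$, the resulting morphism sends $A$ to the basepoint and descends to $X/A \to Y/(Y-B)$. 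Equivalent triples yield identical sheaf morphisms: an \'etale refinement $V$ on which $g$ and $g'$ agree realizes both constructions as restrictions of the same map on a common Nisnevich cover.

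For the reverse map $\Hom \to \Fr$, given $f\colon X/A \to Y/(Y-B)$ in $\Shvb$, I use that the quotient map $q\colon Y_+ \to Y/(Y-B)$ is a Nisnevich-local epimorphism whose ``preimage of the basepoint'' is the open subscheme $Y-B \subset Y$. Nisnevich-locally on $X$, $f$ factors through $q$: there is a Nisnevich cover $\{U_\alpha \to X\}$ equipped with lifts $\tilde{f}_\alpha \colon U_\alpha \to Y$, and on overlaps $U_\alpha \times_X U_\beta$ the two lifts either agree or land in $Y-B$. The closed subsets $Z_\alpha := \tilde{f}_\alpha^{-1}(B) \subset U_\alpha$ then agree on overlaps (where the lifts must actually coincide) and descend to a closed subset $Z \subset X$; the assumption $f(A)=*$ forces $Z\cap A = \emptyset$. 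Passing to the disjoint union of the $U_\alpha$ over a suitable \'etale neighborhood of $Z$ assembles the $\tilde{f}_\alpha$ into a single morphism $g\colon U \to Y$ with $g^{-1}(B) = Z$, producing the desired triple $(Z,U,g)$.

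Finally, checking that the two constructions are mutually inverse is essentially tautological: starting from $(Z,U,g)$, running the forward map and then extracting the support, the \'etale neighborhood, and the lift recovers $(Z,U,g)$ up to the prescribed \'etale-refinement equivalence, while starting from $f$ and reassembling it on the two-piece cover $\{U, X\setminus Z\}$ returns $f$ by the Nisnevich sheaf condition. The main technical obstacle is the reverse direction, specifically verifying that the locally-defined closed supports $Z_\alpha$ glue to a genuine closed subset of $X$ and that the local lifts amalgamate into a single \'etale-local lift $g$ near $Z$; both points rely on the open-complement description of the basepoint in $Y/(Y-B)$ and on standard facts about \'etale descent of closed subschemes and morphisms.
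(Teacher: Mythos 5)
The paper itself does not prove this lemma --- it is quoted from Voevodsky via \cite[Corollary~3.3]{GP} --- so your argument has to stand on its own. Your forward map $\Fr\to\Hom$ is the standard and correct one: $\{U\to X,\ X\setminus Z\hookrightarrow X\}$ is a Nisnevich cover (indeed the associated square is an elementary distinguished square), and the two local sections of $Y/(Y-B)$ glue. One overlap you do not check is $U\times_X U$: there the two pullbacks of $g$ agree because the off-diagonal clopen component of $U\times_X U$ misses the preimage of $Z$ (which equals $Z\times_Z Z=Z$, sitting in the diagonal) and hence maps into $Y-B$; this is the same mechanism you invoke for $U\times_X(X\setminus Z)$ and is easily supplied.

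The genuine gap is in the reverse direction, at the step ``passing to the disjoint union of the $U_\alpha$ over a suitable \'etale neighborhood of $Z$ assembles the $\tilde f_\alpha$ into a single morphism $g\colon U\to Y$.'' An \'etale neighborhood of $Z$ in the sense required by the definition of $\Fr(X/A,Y/(Y-B))$ is an \'etale map $p\colon U\to X$ restricting to an isomorphism $p^{-1}(Z)\xrightarrow{\sim}Z$; the disjoint union $\bigsqcup_\alpha U_\alpha$ has preimage $\bigsqcup_\alpha Z_\alpha$, which covers $Z$ but is not isomorphic to it, and the $\tilde f_\alpha$ do not agree on overlaps away from $Z$ (two lifts of a basepoint section can be arbitrary maps into $Y-B$), so they neither glue nor descend --- ``\'etale descent of morphisms'' does not apply because the descent datum fails off $Z$. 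The missing ingredient, which you use implicitly for the descent of the $Z_\alpha$ but never isolate, is that over a Henselian local $S$ the fibre of $Y(S)\sqcup\{*\}\to Y(S)/(Y-B)(S)$ over a non-basepoint class is a singleton; hence the lift is \emph{unique} at each point of $Z$, the local lifts agree on a further \'etale neighborhood of each point of $(U_\alpha\times_X U_\beta)\times_X Z$ (agreement over a Henselization spreads out only \'etale-locally, since the equalizer of two maps to $Y$ is merely a closed subscheme), and one must then shrink each $U_\alpha$ so that exactly one point, with the same residue field, lies over each point of $Z$ before patching. This shrinking-and-patching is the real content of the lemma and is where the proof in \cite{GP} spends its effort. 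The remaining steps of your outline (descent of the closed subsets $Z_\alpha$, the verification $Z\cap A=\emptyset$, and the mutual-inverse check) are fine.
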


\begin{dfn}\label{def:frn} For $F,G\in\Shvb$ put
\[
\Fr_n(F,G)=\Hom_{\Shvb}(F\wedge\PP^{\wedge n},G\wedge T^n)
\]
For $F,G,H\in \Shvb$ and $a\in \Fr_n(F,G),\,b\in \Fr_m(G,H)$ denote $b\circ a\in \Fr_{n+m}(F,H)$
the composition
\[
F\wedge\PP^{\w n}\wedge\PP^{\w m}\stackrel{a\w \id}\to G\wedge T^n\wedge\PP^{\w m}\stackrel{tw_1}\to G\wedge\PP^{\w m}\wedge T^n\stackrel{b\w \id}\to H\wedge T^m\wedge T^n\stackrel{tw_2}\to H\wedge T^n\wedge T^m
\]
where $tw_1$ permutes $\PP^{\w m}$ with $T^n$ and $tw_2$ permutes $T^m$ with $T^n$.
\end{dfn}

\begin{dfn}\label{def:frnxy}
For $X,Y\in\Sm_k$ we write $\Fr_n(X,Y)$ for $\Fr_n(X_+,Y_+)$. The set $\Fr_n(X,Y)$ is pointed by the trivial morphism sending everything to the distinguished point. Lemma~\ref{Vlemma} in this particular case identifies $\Fr_n(X,Y)$ with the set of triples $(Z,U,g)$ where $Z$ is a closed subset of $X\times (\mathbb{P}^1)^{\times n}$ that does not intersect the complement of $\A^n_X$. Thus $Z$ is simultaneously affine and projective over $X$, hence finite. Writing the map $g\colon U\to\A^n_Y$ as a pair $g=(\phi,f), \phi\colon U\to\A^n_k, f\colon U\to Y,$ we get the following description: the set $\Fr_n(X,Y)$ equals to the set of equivalence classes of quadruples $(Z,U,\phi,f)$ where 
\begin{enumerate}
\item 
$Z$ is a closed subset of $\A^n_X$ finite over $X$,
\item
$U$ is an \'etale neighborhood of $Z$ in $\A^n_X$,
\item
$\phi\colon U\to\A^n_k$ is a regular morphism such that $\phi^{-1}(0)=Z$,
\item
$f\colon U\to Y$ is a regular morphism.
\end{enumerate}
The quadruples $(Z,U,\phi,f)$ and $(Z',U',\phi',f')$ are equivalent if $Z=Z'$ and there is an \'etale neighborhood $V$ of $Z$ in $U\times_X U'$ such that $\phi$ and $f$ coincide on $V$ with $\phi'$ and $f'$ respectively.
\end{dfn}

\begin{dfn}[{\cite[Definition~2.3]{GP}}] The category $\Fr_+(k)$ has the objects those of $\Sm_k$, the hom sets given by 
\[
\Hom_{\Fr_+(k)}(X,Y)=\bigvee\limits_{n\geqslant 0} \Fr_n(X,Y)
\]
and the composition given by Definition~\ref{def:frn}. There is an obvious functor $\Sm_k\to\Fr_+(k)$ identical on objects and tautological on the morphisms
\[
\Hom_{\Sm_k}(X,Y)\to \Hom_{\Shvb}(X_+,Y_+) = \Fr_0(X,Y).
\]
A presheaf on the category $\Fr_+(k)$ is called a framed presheaf. For $X\in\Sm_k$ we denote $\Fr_+(X)$ the framed presheaf given by $U\mapsto \Fr_+(U,X)$.
\end{dfn}

\begin{dfn}[{\cite[Definition~2.4]{GP}}] \label{def:frextprod}
For $F,F',G,G'\in\Shvb$ and $a\in \Fr_n(F,G),\, b\in  \Fr_m(F',G')$ the external product $a\boxtimes b\in \Fr_{n+m}(F\w F',G\w G')$ is given by the composition
\[
F\w F'\w \PP^{\w n}\w \PP^{\w m}\stackrel{tw_1}\to F\w \PP^{\w n}\w F'\w \PP^{\w m}\stackrel{a\w b}\to G\w T^n\w G'\w T^m\stackrel{tw_2}\to G\w G'\w T^n\w T^m
\]
where $tw_1$ twists $\PP^{\w n}$ with $F'$ and $tw_2$ twists $T^n$ with $G'$.
\end{dfn}

\begin{dfn} \label{def:susp}
The external product with $\sigma\colon \PP^1\to T$ considered as an element of $\Fr_1(k,k)$ gives rise to the suspension map
\[
\Sigma\colon\Fr_n(F,G)\to\Fr_{n+1}(F,G),\quad
a\mapsto  a\boxtimes\sigma.
\]
For $F\in\Shvb$ denote $\sigma_F$ the element $\Sigma(\id_F)\in\Fr_1(F,F)$. Then for $a\in\Fr_n(F,G)$ one has 
\[
\Sigma (a)=\sigma_G\circ a.
\]
\end{dfn}

\begin{dfn}[{\cite[Definition~8.3]{GP},~\cite[\S~2]{GPhi}}]
For $X,Y\in\Sm_k$ put
\[
\ZF_n(X,Y)=\mathbb{Z}[\Fr_n(X,Y)]/A
\] 
with $A$ being the subgroup generated by the formal linear combinations 
\[
(Z_1\sqcup Z_2,U,\phi,f) - (Z_1,U-Z_2,\phi,f) -(Z_2,U-Z_1,\phi,f).
\]
The category $\ZF_*(k)$ has the objects those of $\Sm_k$, hom sets given by 
\[
\Hom_{\ZF_*(k)}(X,Y)=\bigoplus_{n\geqslant 0}\ZF_n(X,Y)
\]
and the composition induced by the composition in $\Fr_+(k)$. Note that $\ZF_*(k)$ is an additive category with the direct sum given by $X\sqcup Y$ and that there are obvious functors $\Sm_k\to\Fr_+(k)\to\ZF_*(k)$. An additive contravariant functor $\ZF_*(k)\to\mathrm{Ab}$ is called a $\ZF_*$-presheaf (or a linear framed presheaf). A $\ZF_*$-presheaf restricting to a sheaf on $\Sm_k$ is called a $\ZF_*$-sheaf. We say that a presheaf of abelian groups $F\colon \Sm_k^{op}\to \mathrm{Ab}$ admits a structure of a $\ZF_*$-presheaf if it can be extended to a $\ZF_*$-presheaf,
\[
\xymatrix{
	\Sm_k^{op} \ar[r] \ar[dr]_{F} & \ZF_*(k)^{op} \ar[d] \\
	&  \mathrm{Ab}
}
\]
\end{dfn}

\begin{dfn}
For $X\in\Sm_k$ denote $\ZF_n(X)$ the presheaf given by
\[
U\mapsto \ZF_n(U,X).
\]
The suspension map of Definition~\ref{def:susp} descends to a morphism of presheaves 
\[
\Sigma\colon\ZF_n(X)\to\ZF_{n+1}(X)
\]
and gives rise to a $\ZF_*$-presheaf
\[
\ZF(X)=\colim(\ZF_0(X)\xrightarrow{\Sigma}\ZF_1(X)\xrightarrow{\Sigma}\ldots \xrightarrow{\Sigma} \ZF_n(X)\xrightarrow{\Sigma}\ldots).
\]
\end{dfn}

\begin{lem}[{cf. \cite[the discussion above Theorem~1.1]{GPhi}}] \label{lem:additivity}
A framed presheaf of abelian groups $F$ admits a structure of a $\ZF_*$-presheaf if and only if the natural map $F(X\sqcup Y)\to F(X)\oplus F(Y)$ is an isomorphism for all $X,Y\in\Sm_k$.
\end{lem}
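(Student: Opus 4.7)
The ``only if'' direction is a formality: $X \sqcup Y$ is the biproduct of $X$ and $Y$ in the additive category $\ZF_*(k)$, so any additive extension of $F$ to $\ZF_*(k)^{op}$ must send it to a direct sum in $\mathrm{Ab}$, yielding the required isomorphism.

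For the ``if'' direction, the plan is to extend $F$ from $\Fr_+(k)^{op}$ to $\ZF_*(k)^{op}$ by first declaring it to be $\mathbb{Z}$-linear on the formal combinations $\mathbb{Z}[\Fr_n(X,Y)] \twoheadrightarrow \ZF_n(X,Y)$, and then verifying that the defining relation of $\ZF_n$ descends. Writing the three framed correspondences appearing in the relation as $c = (Z_1 \sqcup Z_2, U, \phi, f)$, $c_1 = (Z_1, U - Z_2, \phi, f)$, $c_2 = (Z_2, U - Z_1, \phi, f) \in \Fr_n(X, Y)$, everything reduces to the identity $F(c) = F(c_1) + F(c_2)$ in $\Hom(F(Y), F(X))$.

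The key construction will be an auxiliary correspondence $c_{12} \in \Fr_n(X, Y \sqcup Y)$ that separates the two components of the support into the two copies of $Y$: it has support $Z_1 \sqcup Z_2$, \'etale neighborhood $(U - Z_2) \sqcup (U - Z_1) \subset \mathbb{A}^n_X$, and second-factor map given by $\iota_1 \circ f|_{U - Z_2}$ on the first summand and $\iota_2 \circ f|_{U - Z_1}$ on the second, where $\iota_j \colon Y \to Y \sqcup Y$ are the inclusions. Alongside the fold $\nabla \colon Y \sqcup Y \to Y$ (a scheme morphism, hence in $\Fr_0$), I would use the ``pointed projections'' $p_i \in \Fr_0(Y \sqcup Y, Y) = \Hom_{\Shvb}(Y_+ \vee Y_+, Y_+)$, each being the identity on one wedge summand and the basepoint-collapse on the other; these are valid framed correspondences because $\Fr_+(k)$ is enriched in pointed sets. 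A direct unwinding via Lemma~\ref{Vlemma} then yields the compatibilities $\nabla \circ c_{12} = c$ and $p_i \circ c_{12} = c_i$ in $\Fr_n(X, Y)$.

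To finish, apply $F$ and use the additivity isomorphism $(F(\iota_1), F(\iota_2)) \colon F(Y \sqcup Y) \xrightarrow{\sim} F(Y) \oplus F(Y)$. Under this identification the maps $F(\nabla), F(p_1), F(p_2) \colon F(Y) \to F(Y \sqcup Y)$ are forced to be the diagonal and the two coordinate inclusions respectively, since $\nabla \circ \iota_j = \id_Y$ and $p_i \circ \iota_j \in \{\id_Y, 0\}$ in $\Fr_0(Y, Y)$. Hence $F(\nabla) = F(p_1) + F(p_2)$ inside the abelian group $\Hom(F(Y), F(Y \sqcup Y))$, and
\[
F(c) = F(c_{12}) \circ F(\nabla) = F(c_{12}) \circ F(p_1) + F(c_{12}) \circ F(p_2) = F(c_1) + F(c_2).
\]
The main obstacle I expect is the on-the-nose verification that $\nabla \circ c_{12}$ and $c$ represent the same element of $\Fr_n(X, Y)$: the composition natively yields a representative with \'etale neighborhood $(U - Z_2) \sqcup (U - Z_1)$ rather than $U$, and one has to exhibit the explicit Voevodsky equivalence between them, using that the diagonal of an \'etale morphism is an open immersion to produce the required \'etale neighborhood inside the fiber product.
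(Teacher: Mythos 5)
Your proposal is correct and follows essentially the same route as the paper: your $c_{12}$ is exactly the paper's auxiliary correspondence $c'=(Z_1\sqcup Z_2,(U-Z_2)\sqcup(U-Z_1),\phi\sqcup\phi,f\sqcup f)\in\Fr_n(X,Y\sqcup Y)$, and your $\nabla,p_1,p_2$ are its maps $p,p_1,p_2\in\Fr_0(Y\sqcup Y,Y)$, with the relation $F(c)=F(c_1)+F(c_2)$ deduced from $p^*=p_1^*+p_2^*$ in the same way. The only difference is that you flag (correctly) the Voevodsky-equivalence check $\nabla\circ c_{12}\sim c$ via the open diagonal of the \'etale neighborhood, which the paper asserts without comment.
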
 
\begin{proof}
A $\ZF_*$-presheaf is an additive functor, so one implication is clear.
	
Suppose that $F$ is a framed presheaf and $F(X\sqcup Y)\to F(X)\oplus F(Y)$ is an isomorphism for every $X,Y\in\Sm_k$. For $Y\in\Sm_k$ let 
\[
p,p_1,p_2\in\Fr_0(Y\sqcup Y,Y)=\Hom_{\Shvb}(Y_+\vee Y_+,Y_+)
\]
be the following morphisms:
\begin{itemize}
\item
$p$ is the identity on both copies of $Y$,
\item
$p_1$ is the identity on the first copy of $Y$ and sends the second copy of $Y$ to the distinguished point,
\item
$p_2$ is the identity on the second copy of $Y$ and sends the first copy of $Y$ to the distinguished point. 
\end{itemize}
By the assumption we have 
\[
p^*=p_1^*+p_2^*\colon F(Y)\to F(Y\sqcup Y).
\]

Consider $c=(Z_1\sqcup Z_2,U,\phi,f)\in\Fr_n(X,Y)$. Then for
\[
c'=(Z_1\sqcup Z_2,(U-Z_2)\sqcup (U-Z_1),\phi\sqcup\phi,f\sqcup f)\in\Fr_n(X,Y\sqcup Y)
\]
we have
\[
p\circ c'=(Z_1\sqcup Z_2,U,\phi,f),\quad
p_1\circ c'=(Z_1,U-Z_2,\phi,f),\quad p_2\circ c'=(Z_2,U-Z_1,\phi,f)
\]
in $\Fr_n(X,Y)$. Put $c_1=p_1\circ c'$, $c_2=p_2\circ c'$. By the above we have
\[
c^*= (c')^*\circ p^*= (c')^*\circ (p^*_1+p_2^*)=c_1^*+c_2^*,
\]
thus $F$ descends to a functor on $\ZF_*(k)$.
\end{proof}

\begin{dfn}[{\cite[\S 1]{GPhi}}] A $\ZF_*$-presheaf $M$ is called 
\begin{itemize}
\item stable if $\sigma_X^*\colon M(X)\to M(X)$ equals to the identity map on $M(X)$,
\item homotopy invariant if for every $X\in\Sm_k$ the projection $p\colon \A^1\times X\to X$ induces an isomorphism $p^*\colon M(X)\to M(X\times\A^1)$.
\end{itemize}
\end{dfn}

\begin{dfn} For $X\in\Sm_k$ denote $\HZF(X)$ the sheaf associated with the presheaf 
\[
U\mapsto \coker\left[\ZF(U\times\A^1,X)\xrightarrow{i_0^*-i_1^*}\ZF(U,X)\right].
\]
Here $i_0,i_1\colon U\to U\times \A^1$ are given by $i_0(u)=(u,0)$ and $i_1(u)=(u,1)$ respectively.
For $n\geqslant 0$ put 
\[
\HZF(X\times\Gm^{\w n}) = \coker \left[ \bigoplus_{i=1}^n\HZF(X\times (\Gm)^{\times (n-1)})\xrightarrow{\sum f_i}\HZF(X\times(\Gm)^{\times n})\right]
\]
with $f_i$ induced by the embedding $\Gm^{n-1}\to\Gm^n$ inserting $1$ at the $i$-th place.
\end{dfn}

\begin{lem}\label{barmap}
For $X\in \Sm_k$ the sheaf $\HZF(X)$ admits a canonical structure of a homotopy invariant stable $\ZF_*$-presheaf.
\end{lem}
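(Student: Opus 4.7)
The plan is to build the $\ZF_*$-presheaf structure directly, then read off homotopy invariance and stability from the colimit description of $\ZF(U,X)$.

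\textbf{Framed transfers.} First I would observe that the presheaf $U\mapsto \ZF(U,X)$ is tautologically a $\ZF_*$-presheaf, namely the representable functor: for $c\in\ZF_*(V,U)$ the pullback is $c^*(a)=a\circ c$, and this extends the restriction functor along $\Sm_k\to\ZF_*(k)$. The sections $i_0,i_1\colon U\to U\times\A^1$ live in $\Sm_k\subset\ZF_*(k)$, so $i_0^*-i_1^*$ is a morphism of $\ZF_*$-presheaves; hence its cokernel inherits a $\ZF_*$-presheaf structure. It remains to argue that the Nisnevich sheafification $\HZF(X)$ still carries framed transfers. By Lemma~\ref{lem:additivity} it suffices to show that the sheafification of an additive framed presheaf is additive, which follows from the fact that the Nisnevich topology on $\Sm_k$ is compatible with finite coproducts and that sheafification commutes with finite products.

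\textbf{Homotopy invariance.} Let $p\colon U\times\A^1\to U$ be the projection. Since $p\circ i_j=\id_U$, the composition $i_j^*\circ p^*$ is the identity on $\ZF(U,X)$, so $p^*$ is a split injection on $\HZF(X)(U)\to\HZF(X)(U\times\A^1)$. For surjectivity, pick $a\in\ZF(U\times\A^1,X)$ and consider
\[
\widetilde{a}=a\circ(\id_U\times m)\in \ZF(U\times\A^1\times\A^1,X),
\]
where $m\colon\A^1\times\A^1\to\A^1$ is the multiplication. Pulling back $\widetilde{a}$ along the two sections of the projection $(U\times\A^1)\times\A^1\to U\times\A^1$ at $t=0,1$ gives $p^*i_0^*a$ and $a$ respectively. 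Hence $a$ and $p^*(i_0^*a)$ become equal in $\HZF(X)(U\times\A^1)$, proving surjectivity.

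\textbf{Stability.} By Definition~\ref{def:susp} the suspension $\Sigma\colon\ZF_n(U,X)\to\ZF_{n+1}(U,X)$ is the external product with $\sigma$, and $\sigma_U=\id_U\boxtimes\sigma\in\Fr_1(U,U)$. Unwinding Definitions~\ref{def:frn} and~\ref{def:frextprod} one checks that $\sigma_U^*(a)=a\circ\sigma_U$ coincides with $\Sigma(a)=a\boxtimes\sigma$ in $\ZF_{n+1}(U,X)$ (the only discrepancy is the canonical shuffle of the $T$-factors, which is built into Definition~\ref{def:frn}). Since $\ZF(U,X)$ is the colimit along the structure maps $\Sigma$, the induced endomorphism $\sigma_U^*$ is the identity on $\ZF(U,X)$, and this descends to the quotient and to the associated sheaf.

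\textbf{Main obstacle.} The only points that need care are (i) the descent of the $\ZF_*$-structure through Nisnevich sheafification, which I would reduce to additivity via Lemma~\ref{lem:additivity}, and (ii) the identification $\sigma_U^*=\Sigma$ at the level of $\ZF_n$, which amounts to bookkeeping with the permutations $tw_1,tw_2$ in Definitions~\ref{def:frn} and~\ref{def:frextprod}; neither should pose a genuine difficulty, but the second is where one must be attentive to avoid a sign/shuffle error.
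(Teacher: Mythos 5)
Your outline follows the paper's (cokernel of a morphism of $\ZF_*$-presheaves, then homotopy invariance, stability, and sheafification), and your homotopy-invariance argument via the multiplication map $m$ is correct. But both of the points you dismissed as routine are genuine gaps, and they are exactly the two points the paper delegates to external results.

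First, the descent through Nisnevich sheafification. Lemma~\ref{lem:additivity} upgrades a framed presheaf to a $\ZF_*$-presheaf, but to apply it to the sheafification you must first know that the sheafification carries framed transfers at all, i.e.\ that each $c^*$ for $c\in\Fr_n(V,U)$ extends to the associated sheaf. That is the hard part, not additivity: morphisms of $\Fr_+(k)$ are not morphisms of $\Sm_k$, so there is no formal reason for them to act on the sheafification. The paper handles this by invoking \cite[Theorem~2.1]{GPhi}, whose hypotheses include homotopy invariance and stability of the presheaf; accordingly the paper first establishes homotopy invariance and stability of the cokernel \emph{presheaf} and only then sheafifies. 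Your compatibility-with-coproducts remark addresses only additivity and leaves the extension of the transfers unproved.

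Second, stability. The elements $\sigma_U^*(a)=a\circ\sigma_U$ and $\Sigma(a)=a\boxtimes\sigma=\sigma_X\circ a$ are \emph{not} equal in $\Fr_{n+1}(U,X)$. Unwinding Definition~\ref{def:frn}, the extra coordinate (the new $\PP^1$ and $T$ factors, and the extra copy of $\A^1$ containing the support) sits in the first slot for $a\circ\sigma_U$ and in the last slot for $\Sigma(a)$; the two quadruples differ by a cyclic permutation of the $n+1$ coordinates acting simultaneously on source and target, which is not absorbed by $tw_1,tw_2$. This discrepancy is killed only up to $\A^1$-homotopy (it is a conjugation, so the two determinant factors cancel), which is precisely why the paper proves stability for the homotopy quotient $\coker(i_0^*-i_1^*)$ rather than for $\ZF(-,X)$ itself, and why it cites the explicit homotopy from \cite[the proof of Remark~2.3]{Neshitov}. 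Without that $\A^1$-homotopy your stability argument does not close.
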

\begin{proof}
The presheaf 
\[
U\mapsto \coker\left[\ZF(U\times\A^1,X)\xrightarrow{i_0^*-i_1^*}\ZF(U,X)\right]
\] 
admits a canonical structure of a $\ZF_*$-presheaf being the cokernel of a morphism of $\ZF_*$-presheaves. It is immediate to see that this presheaf is homotopy invariant and~\cite[the proof of Remark~2.3]{Neshitov} yields that it is stable. Thus \cite[Theorem~2.1]{GPhi} gives the claim.
\end{proof}

\begin{lem} Let $M$ be a homotopy invariant stable $\ZF_*$-sheaf. Then for $X\in\Sm_k$ and $p\geqslant 0$ the canonical morphism 
\[
H^p_{Zar}(X,M)\to H^p_{Nis}(X,M)
\] 
is an isomorphism.
\end{lem}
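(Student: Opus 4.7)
The plan is to adapt the standard Voevodsky-style argument (as used in \cite{MVW06} for presheaves with transfers) to the framed setting, using the Garkusha--Panin machinery already in play. The key input is that by \cite[Theorem~2.1]{GPhi} (the same result cited in the proof of Lemma~\ref{barmap}), a homotopy invariant stable $\ZF_*$-sheaf is \emph{strictly} homotopy invariant on the Nisnevich site. In particular, the presheaves $U\mapsto H^p_{Nis}(U,M)$ are themselves homotopy invariant, and moreover inherit a canonical structure of $\ZF_*$-presheaves.

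First, I would introduce the coniveau filtration on $X$ by codimension of support, yielding the standard coniveau spectral sequence
\[
E_1^{p,q} = \bigoplus_{x\in X^{(p)}} H^{p+q}_x(\mathcal{O}_{X,x}^h, M) \;\Longrightarrow\; H^{p+q}_{Nis}(X,M),
\]
and the analogous Zariski version with henselizations replaced by local rings. The goal is to identify these spectral sequences via a Gersten-type resolution.

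Next, I would use the framed transfers on $M$ to establish the purity vanishing $H^q_x(\mathcal{O}_{X,x}^h, M)=0$ for $q\neq \mathrm{codim}(x)$; this is the key technical point which again follows from the homotopy invariance / strict homotopy invariance results of Garkusha--Panin applied to localizations of $M$, i.e.\ from the fact that a homotopy invariant stable $\ZF_*$-sheaf on a henselian local essentially-smooth scheme has trivial higher Nisnevich cohomology. Combining this with the coniveau spectral sequence produces a Cousin-type resolution
\[
0\to M \to \bigoplus_{x\in X^{(0)}} (i_x)_* M_{x} \to \bigoplus_{x\in X^{(1)}} (i_x)_* H^1_x(-,M) \to \cdots
\]
each term of which is a direct sum of skyscraper sheaves supported at points of $X$, hence flasque in \emph{both} the Zariski and the Nisnevich topologies.

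Finally, since the Cousin complex is a flasque resolution of $M$ in each topology, it computes $H^*_{Zar}(X,M)$ and $H^*_{Nis}(X,M)$ by the same complex of global sections, yielding the desired isomorphism. The main obstacle is the purity/Gersten vanishing for framed sheaves in step two; this is exactly where the $\ZF_*$-transfer structure (as opposed to bare homotopy invariance) is essential, and it is handled precisely by the results of \cite{GPhi} already invoked in Lemma~\ref{barmap}.
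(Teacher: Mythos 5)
Your route --- build the Cousin (Gersten) resolution of $M$ and observe that its terms are acyclic for both topologies --- is genuinely different from the paper's argument and can in principle be made to work, but as written the justification of its key step is incorrect. The vanishing of higher Nisnevich cohomology of a henselian local essentially smooth scheme holds for \emph{every} Nisnevich sheaf of abelian groups (henselian local schemes are points of the Nisnevich topos) and has nothing to do with the framed structure; it does not yield the purity statement $H^q_x(X,M)=0$ for $q\neq\operatorname{codim}(x)$, nor the exactness of the Cousin complex. What is actually needed there is the effacement theorem of \cite{CTHK}, whose hypotheses for the theory $(U,Z)\mapsto H^*_Z(U,M)$ are verified using Gabber's geometric presentation lemma together with the injectivity, excision and strict homotopy invariance theorems for homotopy invariant stable $\ZF_*$-sheaves from \cite{GPhi} (Theorems~2.15, 16.10, 16.11) --- not merely \cite[Theorem~2.1]{GPhi} as invoked in Lemma~\ref{barmap}. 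Once that input is supplied, the machinery of \cite[\S 5.1]{CTHK} does give a resolution by sheaves acyclic in both topologies and hence the desired comparison; so the gap is precisely at the step you yourself flag as the main obstacle, and your proposed source for it is the wrong fact.

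For comparison, the paper's own proof is much lighter: it runs the Leray spectral sequence $H^p_{Zar}(X,\mathcal{H}^q_{Nis}(M))\Rightarrow H^{p+q}_{Nis}(X,M)$ for the change of topology and kills the rows $q>0$ by noting that the restriction $H^q_{Nis}(\Spec\Os_{X,x},M)\to H^q_{Nis}(\Spec k(X),M)$ is injective (this is the only place the framed transfers enter, via \cite[Theorems~2.15(3), 16.10 and~16.11]{GPhi}) while the target vanishes for $q>0$ because it is the Nisnevich cohomology of the spectrum of a field. Thus a single injectivity-on-the-generic-point statement replaces the entire Gersten resolution. If you wish to keep your approach, replace the purported justification of purity by an explicit appeal to the effacement machinery of \cite{CTHK}; otherwise, the shorter Leray argument is preferable.
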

\begin{proof}
Consider the Leray spectral sequence
\[
H^p_{Zar}(X,\mathcal{H}^q_{Nis}(M))\Rightarrow H^{p+q}_{Nis}(X,M)
\]
where $\mathcal{H}^q_{Nis}(M)$ is the Zariski sheafification of the presheaf $U\mapsto H^q_{Nis}(X,M)$. The homomorphism 
\[
H^q_{Nis}(\Spec \Os_{X,x},M)\to H^q_{Nis}(\Spec k(X),M)
\]
is injective by~\cite[Theorems~2.15(3), 16.10 and~16.11]{GPhi}. Thus for $q>0$ we have  
\[
H^q_{Nis}(\Spec \Os_{X,x},M)=0
\]
whence $\mathcal{H}^q_{Nis}(M)=0$.  The spectral sequence degenerates yielding the claim.
\end{proof}

\begin{dfn}
For a presheaf $M$ let $M_{-1}$ be the presheaf given by 
\[
X\mapsto \ker \left[M(\Gm\times X)\xrightarrow{i_1^*}M(X)\right]
\]
for the embedding $i_1\colon X\to\Gm\times X$, $i_1(x)=(1,x)$. It is straightforward to see that if $M$ is a homotopy invariant stable $\ZF_*$-sheaf then $M_{-1}$ is also homotopy invariant stable $\ZF_*$-sheaf.
\end{dfn}

\begin{lem}\label{minusexact}
The functor $M\mapsto M_{-1}$ is exact on the category of homotopy invariant stable $\ZF_*$-sheaves.
\end{lem}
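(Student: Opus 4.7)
The plan is to split the exactness verification into left exactness (which is formal) and right exactness (which reduces to a cohomological vanishing statement at Nisnevich stalks).

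Given a short exact sequence $0 \to M' \to M \to M'' \to 0$ of homotopy invariant stable $\ZF_*$-sheaves, left exactness of $M\mapsto M_{-1}$ is automatic, since $M_{-1}(X)=\ker[i_1^*\colon M(\Gm\times X)\to M(X)]$ is a kernel of left-exact functors in $M$; the snake lemma then yields exactness of $0\to M'_{-1}\to M_{-1}\to M''_{-1}$ as sheaves. The essential content is the surjectivity $M_{-1}\twoheadrightarrow M''_{-1}$ as sheaves, which I would verify on Nisnevich stalks, that is, on sections over Henselization spectra $\Spec R$ with $R=\mathcal{O}^h_{X,x}$ for $X\in\Sm_k$ and $x\in X$.

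The central tool is the canonical natural splitting $M(\Gm\times X)=p^*M(X)\oplus M_{-1}(X)$, arising from $p\circ i_1=\id_X$ for the projection $p\colon \Gm\times X\to X$. Combining this with strict $\A^1$-invariance of $M$ (a consequence of the main results of~\cite{GPhi}) and the localization long exact sequence for $\{0\}\times X\subset \A^1\times X$ — whose middle map $H^i(X,M)=H^i(\A^1\times X,M)\to H^i(\Gm\times X,M)$ is split injective via $i_1^*$ — the localization sequence collapses into split short exact sequences, producing
\[
H^i_{Nis}(\Gm\times X,M)\;\cong\;H^i_{Nis}(X,M)\oplus H^i_{Nis}(X,M_{-1})
\]
for every $i\ge 0$. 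The long exact Nisnevich cohomology sequence for $0\to M'\to M\to M''\to 0$ evaluated on $Y=\Gm\times\Spec R$ then yields $M(Y)\twoheadrightarrow M''(Y)$, since $H^1_{Nis}(Y,M')\cong H^1_{Nis}(\Spec R,M')\oplus H^1_{Nis}(\Spec R,M'_{-1})=0$ by Henselianity of $R$. Restricting this surjection along the natural splitting to its $M_{-1}$-summand gives $M_{-1}(\Spec R)\twoheadrightarrow M''_{-1}(\Spec R)$, completing the sheaf-level epimorphism.

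The main technical obstacle will be establishing the cohomological splitting for $i\ge 1$: it rests on a purity/contraction identification $H^{i+1}_{\{0\}\times X}(\A^1\times X,M)\cong H^i_{Nis}(X,M_{-1})$, which should be a standard consequence of the framework of homotopy invariant stable $\ZF_*$-sheaves developed in~\cite{GPhi}; modulo this identification, the remainder of the argument is a direct diagram chase combining the two splittings with the long exact cohomology sequence.
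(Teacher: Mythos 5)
Your proposal is correct in outline and shares the paper's first two steps (left exactness is formal; the splitting $M(\Gm\times X)\cong M(X)\oplus M_{-1}(X)$ reduces everything to surjectivity of $M(\Gm\times U)\to M''(\Gm\times U)$ for $U$ local), but it diverges at the key step. The paper stays entirely at the level of presheaves: it forms the cokernel presheaf $Q$ of $M\to M''$, observes that $Q$ and $Q(\Gm\times -)$ are again homotopy invariant stable $\ZF_*$-presheaves, and invokes the injectivity-into-the-generic-fibre theorems of Garkusha--Panin (\cite[Theorem~2.15(1),(3)]{GPhi}, applied once over $k$ and once over $k(U)$) to conclude $Q(\Gm\times U)\hookrightarrow Q(\Spec k(\Gm\times U))=0$. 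You instead run the long exact Nisnevich cohomology sequence over a Henselian local base and need the vanishing $H^1_{Nis}(\Gm\times\Spec R,M')=0$, which you obtain from the localization sequence for $\{0\}\times X\subset\A^1\times X$ together with the purity identification $H^{i+1}_{\{0\}\times X}(\A^1\times X,M)\cong H^i_{Nis}(X,M_{-1})$. That identification is true for strictly $\A^1$-invariant sheaves (Morel's purity/Rost--Schmid machinery, applicable here since homotopy invariant stable $\ZF_*$-sheaves are strictly homotopy invariant by \cite[Theorems~16.10, 16.11]{GPhi}), so your argument can be completed; but it is by far the heaviest ingredient in your proof, you leave it as an acknowledged black box, and it is exactly what the paper's route avoids. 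Note also that you only need the case $i=1$ of the splitting, not all $i\geq 1$. The trade-off: your argument is the ``expected'' cohomological one and generalizes immediately to twisted coefficients and supports, whereas the paper's is more elementary given the toolkit it already cites, using only section-level injectivity statements and no purity.
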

\begin{proof}
It is clear that the functor is left exact. 

Suppose that $M\to N$ is an epimorphism of homotopy invariant stable $\ZF_*$-sheaves. The projection $p\colon \Gm\times X\to X$ gives a splitting
\[
M(\Gm\times X)\cong M_{-1}(X)\oplus M(X)
\]
functorial in $X$, thus it is sufficient to show that $M(\Gm\times U)\to N(\Gm\times U)$ is surjective for a local $U$.

The cokernel presheaf $Q=\coker(M\to N)$ is a homotopy invariant stable $\ZF_*$-presheaf as well as the presheaf $Q(\Gm\times -)$. By \cite[Theorem~2.15(3)]{GPhi} the pullback 
\[
Q(\Gm\times U) \to Q(\Gm\times \Spec k(U))
\]
is injective. Applying \cite[Theorem~2.15(1)]{GPhi} to the presheaf $Q$ considered as a $\ZF_*(k(U))$-presheaf we obtain that the pullback
\[
Q(\Gm\times k(U)) \to Q(\Spec k(\Gm\times U))
\]
is injective whence the pullback
\[
Q(\Gm\times U)\to Q(\Spec k(\Gm\times U))
\]
is injective as well. By the assumption $ Q(\Spec k(\Gm\times U))=0$ whence the claim.
\end{proof}

\section{Framed structure for homotopy modules}\label{fchm}

\begin{dfn} Let $\HM$ denote the category of homotopy modules, i.e. pairs $(M_*,\phi_*)$ where $M_*$ is a $\Z$-graded strictly homotopy invariant sheaf and $\phi_i\colon M_i\to (M_{i+1})_{-1}$ are isomorphisms. We usually shorten the notation and refer to a homotopy module $(M_*,\phi_*)$ as $M_*$.
\end{dfn}

\begin{rem} By~\cite[Theorem~5.2.6]{Morelintro} there is a pair of functors
\[\pi_0(-)_*\colon\SH(k)\leftrightarrows \HM\colon H\]
that provides an equivalence between $\HM$ and the heart of the homotopy t-structure on $\SH(k)$. Here $\pi_0(E)_*$ is the graded sheaf associated to the family of presheaves
\[
\{U\mapsto \Hom_{\SH(k)}(\Sigma^{\infty}_{\PP^1}U_+,E\w\Gm^{\w i})\}_{i\in\Z}
\]
and $HM_*$ is the Eilenberg-Maclane spectrum of the homotopy module $M_*$ \cite[the discussion above Theorem~5.2.6]{Morelintro}.
The main result of this section claims that every homotopy module $M_*$ carries a unique structure of a homotopy invariant stable $\ZF_*$-sheaf such that the isomorphisms $\phi_i\colon M_i\to (M_{i+1})_{-1}$ are compatible with the framed transfers.
\end{rem}

\begin{dfn}
For $X,Y\in \Sm_k$ a framed correspondence
\[
a\in \Fr_n(X,Y)=\Hom_{\Shvb}(X_+\w\PP^{\w n}, Y_+\w T^n)
\]
gives rise to a morphism
\[
a_*=\Sigma^{-n}_{\PP^1}((\Sigma^{\infty}_{\PP^1} (\id_{Y_+}\wedge \sigma^n))^{-1}\circ \Sigma^{\infty}_{\PP^1}a)\colon \Sigma^{\infty}_{\PP^{1}}X_+\to \Sigma^{\infty}_{\PP^{1}}Y_+
\]
in the stable motivic homotopy category $\SH(k)$,
\[
\xymatrix{
\Sigma^{\infty}_{\PP^{1}}(X_+\wedge \PP^{\w n}) \ar[r]^{\Sigma^{\infty}_{\PP^1}a} \ar[dr]_{\Sigma^{n}_{\PP^1}a_*} & \Sigma^{\infty}_{\PP^{1}}(Y_+\wedge T^{n}) \\
& \Sigma^{\infty}_{\PP^{1}}(Y_+\wedge \PP^{\w n}) \ar[u]_{\Sigma^{\infty}_{\PP^{1}}(\id_{Y_+}\wedge \sigma^n)}
}
\]
Here $\id_{Y_+}\wedge \sigma^n$ is the isomorphism induced by the canonical $\A^1$-equivalence $\sigma\colon \PP^1\to T$.
\end{dfn}

\begin{lem}\label{frhmfunctor} The assignment $X\mapsto\Sigma^{\infty}_{\PP^{1}} X_+$, $a\mapsto a_*$ defines a functor
\[r\colon\ZF_*(k)\to\SH(k)\]
that sends $\sigma_X$ to $\id_{\Sigma^\infty_{\PP^1} X_+}$.
\end{lem}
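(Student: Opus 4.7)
The plan is to verify the functor axioms in turn: well-definedness on morphisms, the identity and $\sigma_X$ cases, composition, and the additivity relation defining $\ZF_n$ out of $\Fr_n$.

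Well-definedness is immediate: $\sigma\colon \PP^{1}\to T$ is an $\A^1$-weak equivalence, hence $\Sigma^{\infty}_{\PP^1}(\id_{Y_+} \wedge \sigma^n)$ is invertible in $\SH(k)$ and the formula for $a_*$ makes sense. The case $a = \id_X \in \Fr_0(X,X)$ is trivial (no $\sigma$ to invert, no desuspension to perform). For $\sigma_X = \id_{X_+} \wedge \sigma \in \Fr_1(X,X)$ from Definition~\ref{def:susp}, substitution into the defining formula gives
\[
(\sigma_X)_* = \Sigma_{\PP^1}^{-1}\bigl((\Sigma^{\infty}_{\PP^1}(\id_{X_+}\wedge \sigma))^{-1} \circ \Sigma^{\infty}_{\PP^1}(\id_{X_+}\wedge \sigma)\bigr) = \Sigma_{\PP^1}^{-1}(\id) = \id_{\Sigma^\infty_{\PP^1} X_+},
\]
which is the second claim of the lemma.

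The main obstacle will be verifying composition. Given $a \in \Fr_n(X,Y)$ and $b \in \Fr_m(Y,Z)$, I would expand $(b \circ a)_*$ using Definition~\ref{def:frn}, which writes $b \circ a$ as a four-fold composition involving the twists $tw_1$, $tw_2$, and compare with $b_* \circ a_*$ after suspending both sides back to $\PP^{\wedge(n+m)}$. Setting $\psi^W_\ell = \Sigma^{\infty}_{\PP^1}(\id_{W_+}\wedge \sigma^\ell)$, the two sides are related by inserting $\psi^Y_n$ and its inverse in the middle. The reconciliation boils down to the fact that in the symmetric monoidal category $\SH(k)$ the twists $tw_1$ (permuting $T^n$ past $\PP^{\wedge m}$) and $tw_2$ (permuting $T^m$ past $T^n$) become braidings compatible with $\Sigma^\infty\sigma$ by naturality, and to the obvious identification $\psi^Z_{n+m}=\psi^Z_n\wedge\psi^Z_m$ up to braidings. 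Tracking the smash factor reorderings yields $(b \circ a)_* = b_* \circ a_*$.

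For the additivity relation, I would follow the strategy of Lemma~\ref{lem:additivity}: given $c = (Z_1 \sqcup Z_2, U, \phi, f) \in \Fr_n(X,Y)$, write $c = p \circ c'$ and $c_i = p_i \circ c'$ for $c' \in \Fr_n(X, Y \sqcup Y)$ and for $p, p_1, p_2 \in \Fr_0(Y \sqcup Y, Y)$ being the fold map and its two components. Having established composition, it suffices to verify $p_* = (p_1)_* + (p_2)_*$ in $\Hom_{\SH(k)}(\Sigma^{\infty}_{\PP^1}(Y \sqcup Y)_+, \Sigma^{\infty}_{\PP^1} Y_+)$. This is immediate from additivity of $\SH(k)$: the functor $\Sigma^\infty_{\PP^1}$ sends disjoint unions to biproducts, so $(Y \sqcup Y)_+ \cong Y_+ \oplus Y_+$ and the fold map decomposes as the sum of its components $p_1, p_2$. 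This shows that $r$ descends from $\Fr_+(k)$ to $\ZF_*(k)$, completing the construction.
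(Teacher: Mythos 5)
Your proposal is correct and follows essentially the same route as the paper: the paper likewise treats functoriality on $\Fr_+(k)$ and the computation of $(\sigma_X)_*$ as straightforward checks, and then descends to $\ZF_*(k)$ by the additivity argument of Lemma~\ref{lem:additivity}, using that $\Sigma^\infty_{\PP^1}$ turns disjoint unions into biproducts. You simply spell out the details (the braiding/naturality bookkeeping for composition and the reduction of the relation $c_*=c_{1*}+c_{2*}$ to $p_*=p_{1*}+p_{2*}$) that the paper leaves implicit.
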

\begin{proof}
It is straightforward to see that the assignment gives rise to a functor $\tilde{r}\colon \Fr_+(k)\to\SH(k)$ with $\tilde{r}(\sigma_X)=\id_{\Sigma^\infty_{\PP^1} X_+}$. Moreover, $\tilde{r}$ satisfies 
\[
\tilde{r}(X\sqcup Y)=\tilde{r}(X)\oplus \tilde{r}(Y)
\]
for all $X,Y\in\Sm_k$. The claim follows by the same reasoning as in Lemma~\ref{lem:additivity}.
\end{proof}

\begin{dfn} \label{dfn:phifr} Let $\HMfr$ denote the category of framed homotopy modules, i.e. the category of pairs $(M_*,\phi_*)$ where $M_*$ is a $\Z$-graded homotopy invariant stable $\ZF_*$-sheaf and $\phi_i\colon M_i\to (M_{i+1})_{-1}$ are isomorphisms of $\ZF_*$-sheaves. We usually shorten the notation and refer to a framed homotopy module $(M_*,\phi_*)$ as $M_*$. Recall that a homotopy invariant stable $\ZF_*$-sheaf is strictly homotopy invariant by \cite[Theorems~16.10 and~16.11]{GPhi}. Forgetting about the framed structure we obtain a forgetful functor 
\[
\Phifr\colon \HMfr\to \HM.
\]
\end{dfn}

\begin{lem}\label{fronhmodcan}
The functor $\pi_0(-)_*\colon \SH(k)\to \HM$ factors through the category $\HMfr$ making the following diagram commute.
\[
\xymatrix{
\SH(k) \ar[d] \ar[dr]^{\pi_0(-)_*} & \\
\HMfr \ar[r]_{\Phifr} & \HM
}
\]	
\end{lem}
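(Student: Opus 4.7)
The plan is to apply Lemma~\ref{frhmfunctor} to pull back the presheaves defining $\pi_0(E)_*$ through $r\colon\ZF_*(k)\to\SH(k)$. For $E\in\SH(k)$ and $i\in\Z$, the assignment
\[
X\mapsto \Hom_{\SH(k)}(r(X), E\w\Gm^{\w i})
\]
is a contravariant additive functor $\ZF_*(k)\to\mathrm{Ab}$: additivity is automatic because the proof of Lemma~\ref{frhmfunctor} shows that $r$ sends disjoint unions to direct sums in $\SH(k)$. This defines a $\ZF_*$-presheaf whose underlying presheaf of abelian groups is precisely the one appearing in the definition of $\pi_0(E)_i$. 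By a standard argument the Nisnevich sheafification of a $\ZF_*$-presheaf carries a canonical $\ZF_*$-sheaf structure extending the one on the presheaf, so $\pi_0(E)_i$ inherits a $\ZF_*$-sheaf structure.

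Next I would verify the remaining axioms of a framed homotopy module. Stability is immediate from Lemma~\ref{frhmfunctor}: the identity $r(\sigma_X)=\id$ in $\SH(k)$ means that $\sigma_X^*$ acts as the identity on the presheaf, hence on its sheafification. Homotopy invariance is inherited from the fact that $\pi_0(E)_i$ is already strictly $\A^1$-invariant as an object of $\HM$. For the bonding isomorphisms $\phi_i\colon\pi_0(E)_i\to(\pi_0(E)_{i+1})_{-1}$, I would realize $(\pi_0(E)_{i+1})_{-1}(X)$ as a direct summand of $\Hom_{\SH(k)}(r(\Gm\times X), E\w\Gm^{\w(i+1)})$ via the splitting induced by the unit section of $\Gm$; then $\phi_i$ is induced at the presheaf level by smashing with $\id_{\Gm}$, while the $\ZF_*$-action on the target for $a\in\ZF_n(X,Y)$ is precomposition with $r(\id_{\Gm}\boxtimes a)$. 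The $\ZF_*$-equivariance of $\phi_i$ thus reduces to checking that $r$ is compatible with external products of the form $\id_{\Gm}\boxtimes(-)$, which follows by inspecting Definition~\ref{def:frextprod} together with the explicit formula for the functor $r$.

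Functoriality in $E$ is automatic: a morphism $E\to F$ in $\SH(k)$ induces a natural transformation of the corresponding $\ZF_*$-presheaves by postcomposition, hence a morphism of framed homotopy modules after sheafification. The triangle with $\Phifr$ commutes by construction, since forgetting the $\ZF_*$-structure returns the original presheaves defining $\pi_0(E)_*$. The main obstacle is the verification that the bonding maps $\phi_i$ are $\ZF_*$-equivariant; this hinges on the compatibility of $r$ with external products, a point that is implicit in, but not singled out by, Lemma~\ref{frhmfunctor}, and which would require unwinding the definitions of $\boxtimes$ and $a_*$ to compare $r(\id_{\Gm}\boxtimes a)$ with $\id_{\Sigma^{\infty}_{\PP^1}\Gm}\w r(a)$ in $\SH(k)$.
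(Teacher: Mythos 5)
Your proposal is correct and follows essentially the same route as the paper: use Lemma~\ref{frhmfunctor} to make $U\mapsto \Hom_{\SH(k)}(\Sigma^\infty_{\PP^1}U_+, E\w\Gm^{\w i})$ a homotopy invariant stable $\ZF_*$-presheaf, sheafify (the ``standard argument'' you invoke is \cite[Theorem~2.1]{GPhi}, which the paper cites at this point), and observe that the bonding isomorphisms are induced by morphisms in $\SH(k)$ and hence respect the framed structure. You are in fact slightly more explicit than the paper about the one nontrivial compatibility, namely that the adjunction identification of $(\pi_0(E)_{i+1})_{-1}$ intertwines the action of $a$ with that of $\id_{\Gm}\boxtimes a$, which the paper leaves implicit.
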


\begin{proof}
Lemma~\ref{frhmfunctor} yields that for $E\in\SH(k)$ and every $i\in\Z$ the presheaf 
\[
U\mapsto \Hom_{\SH(k)}(\Sigma^{\infty}_{\PP^1}U_+,E\w\Gm^{\w i})
\]
is a stable $\ZF_*$-presheaf which is obviously homotopy invariant. The associated sheaves $\pi_0(E)_i$ are homotopy invariant stable $\ZF_*$-sheaves by~\cite[Theorem~2.1]{GPhi}. For every morphism $E\to E'$ in $\SH(k)$ the induced map $\pi_0(E)_i\to\pi_0(E')_i$ is a morphism of $\ZF_*$-sheaves, thus the isomorphisms $\pi_0(E)_i\to (\pi_0(E)_{i+1})_{-1}$ associated to the isomorphisms $E\to \underline{\Hom}(\Sigma^\infty_{\PP^1}\Gm^{\w 1},E\w\Gm^{\w 1})$ respect framed structure.
\end{proof}

\begin{dfn}\label{dfn:frtransferication}
Let $\pi^{\mathrm{fr}}_0(-)_*\colon \SH(k)\to \HMfr $ be the functor given by Lemma~\ref{fronhmodcan}. Then the composition with the Eilenberg--Maclane functor gives rise to a functor
\[
\Psifr=\pi^{\mathrm{fr}}_0(-)_* \circ H\colon \HM \to \HMfr.
\]
\end{dfn}

\begin{dfn}\label{dfn:hzf}
For $X\in\Sm_k$ let $\HZF(X)_*$ be the $\Z$-graded $\ZF_*$-sheaf given by
\[
\HZF(X)_i=
\left[
\begin{array}{ll}
\HZF(X\times\Gm^{\w i}), & i\geqslant 0, \\ 
\left(\HZF(X)\right)_{-|i|}, & i<0.
\end{array}
\right.
\]
Then $\HZF(X)_*$ is a framed homotopy module by \cite[Theorem~C]{AGP}.
\end{dfn}

\begin{lem}\label{onhzfsame}
There is a canonical isomorphism of framed homotopy modules 
\[
\HZF(X)_*\cong \Psifr  \Phifr \HZF(X)_*.
\]
\end{lem}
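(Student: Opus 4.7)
The plan is to identify $\HZF(X)_*$ with the framed homotopy module $\pi_0^{\mathrm{fr}}(\Sigma_{\PP^1}^\infty X_+)_*$ and then deduce the isomorphism from Morel's equivalence between $\HM$ and the heart of $\SH(k)$.

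First I would establish a canonical isomorphism of framed homotopy modules $\HZF(X)_* \cong \pi_0^{\mathrm{fr}}(\Sigma_{\PP^1}^\infty X_+)_*$. On underlying sheaves this is the identification $\HZF(X\times\Gm^{\wedge i}) \cong \pi_0(\Sigma_{\PP^1}^\infty X_+)_i$ for $i\geqslant 0$ coming from \cite[Theorem~11.1 and Corollary~11.3]{GP}, combined with Lemma~\ref{minusexact} for the negative degrees. Compatibility of the framed structures is essentially the content of \cite[Theorem~C]{AGP} (already invoked in Definition~\ref{dfn:hzf}): for a framed correspondence $a\in \Fr_n(U,X)$ the stable map $r(a)=a_*$ of Lemma~\ref{frhmfunctor} induces on $\pi_0$-sheaves exactly the action of $a$ in the tautological $\ZF_*$-module structure on $\HZF(U,X)$.

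Applying $\Phifr$ to this identification yields $\Phifr \HZF(X)_* \cong \pi_0(\Sigma_{\PP^1}^\infty X_+)_*$ in $\HM$ (cf.\ Lemma~\ref{fronhmodcan}). By Morel's equivalence recalled above Definition~\ref{dfn:frtransferication}, the natural comparison map $H\pi_0(E)_* \to E$ in the heart is an isomorphism for every $E\in\SH(k)$. Since the $\ZF_*$-action of Lemma~\ref{fronhmodcan} on $\pi_0(E)_*$ is natural in $E$, applying $\pi_0^{\mathrm{fr}}$ to this comparison map gives a natural isomorphism of framed homotopy modules
\[
\pi_0^{\mathrm{fr}}(H\pi_0(E)_*)_* \cong \pi_0^{\mathrm{fr}}(E)_*.
\]
Specializing $E=\Sigma_{\PP^1}^\infty X_+$ and chaining the preceding identifications gives
\[
\Psifr\Phifr\HZF(X)_* = \pi_0^{\mathrm{fr}}(H\Phifr\HZF(X)_*)_* \cong \pi_0^{\mathrm{fr}}(H\pi_0(\Sigma_{\PP^1}^\infty X_+)_*)_* \cong \pi_0^{\mathrm{fr}}(\Sigma_{\PP^1}^\infty X_+)_* \cong \HZF(X)_*,
\]
and naturality in the variables ensures the isomorphism is canonical.

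The main obstacle is the first step: showing that the explicit framed structure on $\HZF(X)_*$ from Definition~\ref{dfn:hzf} agrees with the one that $\pi_0^{\mathrm{fr}}(\Sigma_{\PP^1}^\infty X_+)_*$ inherits through Lemma~\ref{frhmfunctor}. The underlying sheaf identification is classical, but compatibility of framed structures requires unpacking the construction of $a\mapsto a_*$ and comparing it to the tautological composition of framed correspondences in the cokernel defining $\HZF$. Once this is in hand, everything else is a formal consequence of Morel's equivalence and the naturality of the assignments $E\mapsto \pi_0^{\mathrm{fr}}(E)_*$ and $M_*\mapsto HM_*$.
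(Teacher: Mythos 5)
Your proposal is correct and follows essentially the same route as the paper: identify $\HZF(X)_i$ with $\pi_0^{\mathrm{fr}}(\Sigma^\infty_{\PP^1}X_+)_i$ as framed sheaves via \cite[Theorem~11.1 and Corollary~11.3]{GP}, then use that the truncation map $\Sigma^\infty_{\PP^1}X_+\to H\pi_0(\Sigma^\infty_{\PP^1}X_+)_*$ induces an isomorphism on $\pi_0^{\mathrm{fr}}$. (Only a cosmetic point: the comparison map goes $E\to H\pi_0(E)_*$ rather than the other way, but this does not affect the argument.)
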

\begin{proof}
For every $i\in \Z$ the functor $\ZF_*(k)\to\SH(k)$ of Lemma~\ref{frhmfunctor} gives rise to a morphism of $\ZF_*$-presheaves 
\[
\ZF_*(X\times\Gm^{\w i})\to \Hom_{\SH(k)}(-,\Sigma^{\infty}_{\PP^1}X_+\w\Gm^{\w i})
\]
that descends to an isomorphism of framed sheaves
\[
\HZF(X)_i\xrightarrow{\simeq} \pi^{\mathrm{fr}}_{0}(\Sigma^{\infty}_{\PP^1}X_+)_{i}
\]
by \cite[Theorem~11.1 and Corollary~11.3]{GP}. The canonical map 
\[
\Sigma^{\infty}_{\PP^1}X_+ \to H\pi_{0}(\Sigma^{\infty}_{\PP^1}X_+)_{*}
\]
induces an isomorphism 
\[
\pi^{\mathrm{fr}}_{0}(\Sigma^{\infty}_{\PP^1}X_+)_i\to \pi^{\mathrm{fr}}_{0}(H\pi_{0}(\Sigma^{\infty}_{\PP^1}X_+)_{*})_i
\]
whence
\[
\HZF(X)_i\cong \pi^{\mathrm{fr}}_{0}(\Sigma^{\infty}_{\PP^1}X_+)_{i}\cong  \Psifr  \Phifr \pi^{\mathrm{fr}}_{0}(\Sigma^{\infty}_{\PP^1}X_+)_{i} \cong \Psifr  \Phifr\HZF(X)_i. \qedhere
\]
\end{proof}

\begin{lem}\label{frhmsur} For a framed homotopy module $M_*$ there exists is a surjection of framed homotopy modules $\bigoplus_{j\in J}\HZF(X_j)_{*+n_j}\to M_*$ for some $J$, $X_j$ and $n_j$.
\end{lem}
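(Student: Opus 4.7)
The strategy is to exhibit every section of $M_*$ as lying in the image of a canonical morphism from an appropriately shifted "free" object $\HZF(X)_*$. Concretely, for every triple $(X,n,m)$ with $X \in \Sm_k$, $n \in \Z$, and $m \in M_n(X)$, I will construct a morphism of framed homotopy modules $\alpha_{X,n,m}\colon \HZF(X)_{*-n} \to M_*$ whose degree-$n$ component sends the Yoneda class $[\id_X] \in \HZF(X)_0(X) = \HZF(X)(X)$ to $m$. Taking $J$ to be the set of all such triples (so that $X_{(X,n,m)}=X$ and $n_{(X,n,m)}=-n$) then yields the required surjection.

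\textbf{Construction of $\alpha_{X,n,m}$.} The key input is that $\HZF(X)$ is the free homotopy invariant stable $\ZF_*$-sheaf on $X$: for any such sheaf $F$ there is a Yoneda-type identification $\Hom(\HZF(X), F) \cong F(X)$, sending $\beta$ to $\beta([\id_X])$. Applied to $F = M_n$ and the given section $m$, this produces a $\ZF_*$-sheaf morphism $\beta_m\colon \HZF(X) \to M_n$. I upgrade $\beta_m$ to a family $\{\alpha_i\}_{i\in\Z}$ of $\ZF_*$-sheaf morphisms, degree by degree. For $i \leq n$, iterating the structure isos yields canonical identifications $(M_n)_{-(n-i)} \cong M_i$ and $(\HZF(X))_{-(n-i)} = \HZF(X)_{i-n}$, and one sets $\alpha_i := (\beta_m)_{-(n-i)}$, the $(n-i)$-fold contraction of $\beta_m$. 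For $i \geq n$, the inverse of the iterated structure iso gives $M_n(X) \cong (M_i)_{-(i-n)}(X) \hookrightarrow M_i(X \times \Gm^{i-n})$; this produces $\tilde m_i \in M_i(X \times \Gm^{i-n})$ lying in the ``reduced part'', and the Yoneda identification then yields $\alpha_i\colon \HZF(X \times \Gm^{\wedge(i-n)}) = \HZF(X)_{i-n} \to M_i$, with factoring through the reduced quotient guaranteed by $\tilde m_i \in (M_i)_{-(i-n)}$.

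\textbf{Surjectivity.} Let $\alpha\colon \bigoplus_{(X,n,m)\in J} \HZF(X)_{*-n}\to M_*$ be the assembled map. For any $Y\in\Sm_k$, $i\in\Z$, and $m'\in M_i(Y)$, the summand indexed by $(Y,i,m')\in J$ has degree-$i$ component equal to $\HZF(Y)_0 = \HZF(Y)$, and by construction $\alpha_i$ sends the class $[\id_Y]$ there to $m'$. Thus $m'$ lies in the image of the degree-$i$ part of $\alpha$ evaluated on $Y$, giving sectionwise and hence sheaf-theoretic surjectivity.

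\textbf{Main obstacle.} The most delicate step is verifying that the family $\{\alpha_i\}_{i\in\Z}$ is compatible with the structure isomorphisms of both $\HZF(X)_{*-n}$ and $M_*$, especially at $i=n$ where one passes from the ``contraction side'' ($i<n$) to the ``$\Gm$-suspension side'' ($i>n$). This compatibility rests on the explicit description of the framed homotopy module structure on $\HZF(X)_*$ provided by Theorem~C of~\cite{AGP} and the naturality of the Yoneda identification: under the transporting identifications the contraction and $\Gm$-suspension structure isos on both sides match tautologically, reducing each compatibility square to a naturality statement for the Yoneda map associated to the images of $m$ under the iterated $\phi^M$.
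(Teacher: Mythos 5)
Your proposal is correct and follows essentially the same route as the paper: the paper likewise uses the Yoneda-type factorization of $\ZF_*(X)\to M_i$ through $\HZF(X)$ for each section $a\in M_i(X)$, propagates to the other degrees via the structure isomorphisms $\phi_i$ to obtain a morphism of framed homotopy modules $\HZF(X)_{*-i}\to M_*$, and takes the direct sum over all sections and all $X$. Your treatment is somewhat more explicit about the negative-degree (contraction) components and the compatibility checks, which the paper compresses into ``iterating we obtain compatible morphisms.''
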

\begin{proof}
Every $a\in M_i(X)$ defines a morphism of $\ZF_*$-presheaves $\ZF_*(X)\to M_i$. Since $M_i$ is homotopy invariant and stable, the morphism factors through $\HZF(X)$ yielding a morphism $\rho_a\colon \HZF(X)\to M_i$. The isomorphism 
\[
\phi_i\colon M_i(X)\cong  (M_{i+1})_{-1}(X) =M_{i+1}(X\times\Gm^{\w 1})
\]
combined with the reasoning above gives rise to a morphism  of $\ZF_*$-sheaves
\[
\rho_{\phi_i(a)}\colon \HZF(X)_{1}\to M_{i+1}.
\]
Iterating we obtain compatible morphisms of $\ZF_*$-sheaves
\[
\HZF(X)_{m}\to M_{i+m},
\]
i.e. a morphism of framed homotopy modules $\HZF(X)_{*-i}\to M_*$. The direct sum of such morphisms for all $a\in M_*(X)$ and all (isomorphism classes) of $X$ gives the claim.
\end{proof}

\begin{prop}\label{unfrstr} The functors 
\[
\Phifr\colon \HMfr \rightleftarrows \HM\colon \Psifr
\]
are inverse equivalences of categories.
\end{prop}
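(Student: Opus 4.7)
The plan is to exhibit $\Phifr$ as an equivalence of categories with quasi-inverse $\Psifr$, by separately verifying (i) that $\Phifr\Psifr\cong\id_{\HM}$ and (ii) that $\Phifr$ is fully faithful; together these imply the stated assertion.

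For (i), Lemma~\ref{fronhmodcan} gives a factorization $\pi_0(-)_* = \Phifr\circ \pi^{\mathrm{fr}}_0(-)_*$, so
\[
\Phifr\Psifr \;=\; \Phifr\circ \pi^{\mathrm{fr}}_0(-)_*\circ H \;=\; \pi_0(-)_*\circ H \;\cong\; \id_{\HM}
\]
by the equivalence of~\cite[Theorem~5.2.6]{Morelintro}. This already yields essential surjectivity of $\Phifr$. Faithfulness is immediate since $\Phifr$ is a forgetful functor: a morphism in $\HMfr$ is a morphism of underlying graded sheaves together with compatibility with framed transfers, so any two morphisms in $\HMfr$ that agree in $\HM$ coincide.

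The substantive step is fullness. Given $M_*,N_*\in\HMfr$ and $f\colon\Phifr M_*\to\Phifr N_*$ in $\HM$, I plan to iterate Lemma~\ref{frhmsur} (once on $M_*$, once on the kernel of the first surjection inside the abelian category $\HMfr$) to produce a two-term presentation
\[
\bigoplus_l \HZF(Y_l)_{*+m_l} \xrightarrow{d} \bigoplus_j \HZF(X_j)_{*+n_j} \xrightarrow{p} M_* \to 0
\]
in $\HMfr$. The key ingredient is the following representability identity: for every $X\in\Sm_k$, $n\in\Z$ and $L_*\in\HMfr$, both $\Hom_{\HMfr}(\HZF(X)_{*+n},L_*)$ and $\Hom_{\HM}(\Phifr\HZF(X)_{*+n},\Phifr L_*)$ are naturally isomorphic to $L_{-n}(X)$, and the forgetful map between them is the identity. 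The first identification is the shifted/graded version of the construction opening the proof of Lemma~\ref{frhmsur}; the second follows from Lemma~\ref{onhzfsame}, which identifies $\Phifr\HZF(X)_*$ with $\pi_0(\Sigma^{\infty}_{\PP^1}X_+)_*$, combined with the usual adjunction in $\SH(k)$. Granting this, $f\circ\Phifr p$ lifts canonically to a morphism $q\colon\bigoplus_j\HZF(X_j)_{*+n_j}\to N_*$ in $\HMfr$; because $\Phifr(q\circ d)=f\circ\Phifr(p\circ d)=0$ and $\Phifr$ is faithful, $q\circ d=0$, whence $q$ descends to the desired lift $\tilde f\colon M_*\to N_*$.

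The main obstacle I anticipate is the representability identity on the $\HM$ side: it amounts to comparing a Yoneda-type pairing for $\HZF(X)_{*+n}$ in $\HMfr$ with an $\SH(k)$-Hom under the heart equivalence, while keeping careful track of the grading shifts, the $\Gm$-suspensions, and the framed-versus-underlying identifications. A subsidiary technical check is that $\HMfr$ is abelian so that the two-term presentation makes sense; this reduces to the exactness of $M\mapsto M_{-1}$ on homotopy invariant stable $\ZF_*$-sheaves (Lemma~\ref{minusexact}) and the standard closure of strict homotopy invariance under kernels and cokernels of sheaves of abelian groups.
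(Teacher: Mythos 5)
Your proposal is correct and rests on exactly the same two pillars as the paper's proof --- the generation of framed homotopy modules by the $\HZF(X_j)_{*+n_j}$ (Lemma~\ref{frhmsur}) and the canonical identification $\HZF(X)_*\cong \Psifr\Phifr\HZF(X)_*$ (Lemma~\ref{onhzfsame}) --- but it packages them differently. The paper takes the full resolution from Lemma~\ref{frhmsur} and asserts that $\Psifr\Phifr\cong\id$ holds termwise and hence on $M_*$, which implicitly uses naturality of the isomorphisms of Lemma~\ref{onhzfsame} and compatibility of $\Psifr\Phifr$ with the cokernel; you instead prove $\Phifr$ is fully faithful from a two-term presentation together with the representability identity $\Hom_{\HMfr}(\HZF(X)_{*+n},L_*)\cong L_{-n}(X)\cong \Hom_{\HM}(\Phifr\HZF(X)_{*+n},\Phifr L_*)$, and combine this with $\Phifr\Psifr\cong\id$ for essential surjectivity. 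What your route buys is that the naturality the paper leaves implicit is isolated into a single Yoneda-type statement; the price is that you must actually prove \emph{bijectivity} of the map $L_{-n}(X)\to\Hom_{\HMfr}(\HZF(X)_{*+n},L_*)$ (the proof of Lemma~\ref{frhmsur} only constructs the map, so you need that a morphism of framed homotopy modules out of $\HZF(X)_{*+n}$ is determined by its value on $\overline{\id}_X$ in degree $-n$, using that $\HZF(X\times\Gm^{\wedge m})$ is generated as a $\ZF_*$-sheaf by the image of the tautological class and that negative degrees are contractions), and you must match it with the $\SH(k)$-adjunction computation on the $\HM$ side, which uses connectivity of $\Sigma^\infty_{\PP^1}X_+$. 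You correctly flag both of these as the remaining work, and your subsidiary check that kernels exist in $\HMfr$ (via left exactness of $M\mapsto M_{-1}$) is the right one, so the argument goes through.
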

\begin{proof}
The isomorphism $\Phifr\circ \Psifr\cong \id$ is immediate.

For a framed homotopy module $M_*$ Lemma~\ref{frhmsur} provides a resolution
\[
\hdots \to \bigoplus_{j\in J_2}\HZF(X_j)_{*+n_j}\to \bigoplus_{j\in J_1}\HZF(X_j)_{*+n_j}\to \bigoplus_{j\in J_0}\HZF(X_j)_{*+n_j}\to M_*.
\]
Lemma~\ref{onhzfsame} shows that $\Psifr\circ \Phifr\cong \id$ for the resolution whence the claim.
\end{proof}

\begin{dfn} \label{dfn:cohext} Let $M_*$ be a homotopy module. For a sheaf of abelian groups $F$ put
\[
\Ext^n(F,M_*)=\Hom_{\mathrm{D}(\Ab)}(F,M_*[n])
\]
with $M_*=\bigoplus_{i\in \Z}M_i$ considered as a complex concentrated in the zeroth degree. Then for a closed subset $S\subset X\in\Sm_k$ we have the following isomorphisms:
\begin{multline*}
H^i_S(X,M_*)\cong H^i_{S\times\Gm^{\w n}}(X\times\Gm^{\w n},M_{*+n}) \cong \\ 
\cong \Hom_{\mathrm{D}(\Ab)}(\Z[(X/X-S)\wedge\Gm^{\w n}],M_{*+n}[i])\cong \\
\cong\Ext^{n+i}(\Z[(X/X-S)\w T^n],M_{*+n})\cong \\
\cong \Ext^{n+i}(\Z[(X/X-S)\w\PP^{\w n}],M_{*+n}).
\end{multline*}
The first isomorphism is given by the proof of Lemma~\ref{minusexact} combined with Proposition~\ref{unfrstr}, the second one is well-known and the rest are induced by the canonical $\A^1$-equivalences $\PP^{\w n}\sim T^n\sim \Gm^{\w n}\wedge S^n$.
\end{dfn}

\begin{lem}\label{fronhmod} For a homotopy module $M_*$ and $a\in\Fr_n(X,Y)$ the restriction $a^*\colon (\Psifr M_*)(Y)\to (\Psifr M_*)(X)$ equals to the composition
\[
M_*(Y)\cong \Ext^n(\Z[Y\w T^n],M_{*+n})\to \Ext^n(\Z[X\w \PP^{\w n}],M_{*+n})\cong M_*(X).
\]
\end{lem}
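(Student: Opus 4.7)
The plan is to unwind the framed action on $\Psi^{\mathrm{fr}} M_*$ and match it step by step with the cohomological formula in the statement, using only the definition of $a_*$, the Eilenberg--Maclane identification of the homotopy module $M_*$ with $\pi_0^{\mathrm{fr}}$ of $HM_*$, and the canonical identifications entering Definition~\ref{dfn:cohext}. By Lemma~\ref{fronhmodcan} and Definition~\ref{dfn:frtransferication}, the pullback by $a\in\Fr_n(X,Y)$ on $(\Psifr M_*)(Y)$ is, after identifying $M_i(Z)\cong \Hom_{\SH(k)}(\Sigma^{\infty}_{\PP^1} Z_+, HM_*\wedge\Gm^{\wedge i})$, simply precomposition with the morphism $a_*\colon \Sigma^{\infty}_{\PP^1} X_+\to \Sigma^{\infty}_{\PP^1} Y_+$ of Definition~\ref{dfn:frtransferication}.

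First I would $\PP^1$-suspend $n$ times, so that precomposition with $a_*$ becomes precomposition with $\Sigma^n_{\PP^1} a_*$, and invoke the defining formula
\[
\Sigma^n_{\PP^1} a_*=(\Sigma^{\infty}_{\PP^1}(\id_{Y_+}\wedge\sigma^n))^{-1}\circ\Sigma^{\infty}_{\PP^1} a,
\]
where $a\colon X_+\wedge\PP^{\wedge n}\to Y_+\wedge T^n$ is the underlying morphism of pointed sheaves from Lemma~\ref{Vlemma}. Next, I would convert the $\SH(k)$-Hom groups in sight into $\Ext$-groups in $\Ab$ via the homotopy module equivalence $HM_*\wedge\Gm^{\wedge n}\simeq HM_{*+n}$ and the standard Eilenberg--Maclane adjunction, recovering precisely the canonical identifications $M_*(Z)\cong \Ext^n(\Z[Z_+\wedge T^n], M_{*+n})\cong \Ext^n(\Z[Z_+\wedge\PP^{\wedge n}], M_{*+n})$ of Definition~\ref{dfn:cohext}, with the two $\Ext$-groups related through $\sigma^n$. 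Under these identifications, the two factors in the decomposition of $\Sigma^n_{\PP^1} a_*$ match the two pieces of the composition in the statement: precomposition with $\Sigma^{\infty}_{\PP^1} a$ realizes the middle arrow $\Ext^n(\Z[Y\wedge T^n], M_{*+n})\to \Ext^n(\Z[X\wedge \PP^{\wedge n}], M_{*+n})$, while $(\Sigma^{\infty}_{\PP^1}(\id_{Y_+}\wedge\sigma^n))^{-1}$ is absorbed into the identification on the source.

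The main obstacle will be a coherence check: verifying that the iterated suspension isomorphism $M_i(Z)\cong M_{i+n}(Z\times\Gm^{\wedge n})$ used in Definition~\ref{dfn:cohext}, which is built from Lemma~\ref{minusexact} and Proposition~\ref{unfrstr}, agrees with the one read off the Eilenberg--Maclane spectrum $HM_*$ via the homotopy module structure maps $\phi_*$. I would verify this by running it through the equivalence $\Psifr\circ\Phifr\simeq\id$ of Proposition~\ref{unfrstr} together with the explicit comparison in Lemma~\ref{onhzfsame}; once the coherence is in place, the steps above assemble into the stated equality.
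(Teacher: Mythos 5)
Your proposal is correct in outline, but it takes a genuinely different and considerably more laborious route than the paper. The paper's proof is a two-line uniqueness argument: it observes that the displayed composition through $\Ext^n(\Z[Y\w T^n],M_{*+n})\to \Ext^n(\Z[X\w \PP^{\w n}],M_{*+n})$ itself defines a framed homotopy module structure on $M_*$ (functoriality in $a$, stability and compatibility with the $\phi_i$ being the ``straightforward'' part), and then invokes $\Psifr\circ\Phifr\cong\id$ from Proposition~\ref{unfrstr} to conclude that a homotopy module carries at most one framed structure, so the two must coincide. You instead unwind $a^*$ as precomposition with $a_*$ in $\SH(k)$ and translate through the Eilenberg--Maclane adjunction; this works, but it forces you to import the identification of $\Hom_{\SH(k)}(\Sigma^\infty_{\PP^1}(-),HM_*\w\Gm^{\w i})$ with $\Ext$-groups in $\mathrm{D}(\Ab)$ (a standard fact about $HM_*$, but one the paper never needs to invoke), and it concentrates all the difficulty in exactly the coherence check you flag: matching the suspension isomorphisms of Definition~\ref{dfn:cohext} with those encoded in the bonding data of $HM_*$. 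Your proposed resolution of that check via $\Psifr\circ\Phifr\simeq\id$ and Lemma~\ref{onhzfsame} is somewhat vague as stated (Lemma~\ref{onhzfsame} concerns $\HZF(X)_*$ specifically, not an arbitrary $M_*$), and once you lean on Proposition~\ref{unfrstr} for the coherence you are effectively reconstructing the paper's uniqueness argument inside your computation; it would be cleaner either to note that the coherence is built into the construction of $HM_*$ from the $\phi_i$, or simply to adopt the uniqueness argument wholesale. What your approach buys is an explicit, definition-level verification rather than an appeal to abstract uniqueness; what the paper's buys is brevity and independence from the representability of sheaf cohomology by $HM_*$.
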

\begin{proof}
It is straightforward to see that the above rule endows $M_*$ with the structure of a framed homotopy module. The equivalence $\Psifr\circ \Phifr \cong \id$ from Proposition~\ref{unfrstr} yields that such structure is unique.
\end{proof}

\begin{dfn}\label{frcohaction}
Similar to Definition~\ref{def:frnxy}, for $X,Y\in\Sm_k$ an element $a\in \Fr_n(X_+,Y_+\wedge T^l)$ is defined by a quadruple $(Z,U,\phi,f)$ where
\begin{enumerate}
	\item 
	$Z$ is a closed subset of $\A^n_X$ finite over $X$,
	\item
	$U$ is an \'etale neighborhood of $Z$ in $\A^n_X$,
	\item
	$\phi\colon U\to\A^{l+n}_k$ is a regular morphism such that $\phi^{-1}(0)=Z$,
	\item
	$f\colon U\to Y$ is a regular morphism.
\end{enumerate}
For a closed subset $S\subset Y$ put $a^{-1}(S)=\pi(f^{-1}(S)\cap Z)$ with $\pi\colon U\to X$ being the projection map. Then $a$ takes $(X-a^{-1}(S))_+\w\PP^{\w n}$ to $(Y-S)_+\w T^{n+l}$ and induces a morphism of sheaves
\[
(X/X-a^{-1}(S))\w \PP^{\w n} \to (Y/Y-S)\w T^{n+l}.
\]
For a homotopy module $M_*$ the morphism yields a homomorphism
\[
\Ext^{n+i}(\Z[(Y/Y-S)\w T^{n+l}],M_{*+n})\to\Ext^{n+i}(\Z[(X/X-a^{-1}(S))\w \PP^{\w n}],M_{*+n}).
\]
Applying the isomorphisms from Definition~\ref{dfn:cohext} we obtain a map
\[
a^*\colon H^{i-l}_{S}(Y,M_{*+n-l})\to H^{i}_{a^{-1}(S)}(X,M_{*+n}).
\]
\end{dfn}

\begin{dfn}\label{KMWmodule}
Let $M$ be a framed presheaf. The pairing 
\[
M(U\times\Gm^{\times n})\times\Fr_+(U,\Gm^{\times n})\to M(U)
\]
given by $(m,a)\mapsto \Delta^*(\id_U\boxtimes a)^{*}(m)$ with $\Delta\colon U\to U\times U$ being the diagonal map gives rise to a pairing of framed presheaves
\[
M_{-n}\times\Fr_+(\Gm^{\wedge n})\to M.
\] 
If $M$ is a homotopy invariant stable $\ZF_*$-sheaf this pairing descends to a morphism of $\ZF_*$-sheaves
\[
M_{-n}\times\HZF(\Gm^{\wedge n})\to M.
\]
Let $M_*$ be a homotopy module. Endow it with the framed structure via the functor $\Psifr$.  In view of the identification $\HZF(\Gm^{\wedge n})\cong\KMW_n$ given by \cite[Corollary~11.3]{GP} combined with \cite[Theorem~6.4.1]{Morelintro} the pairing endows $M_*$ with a $\KMW_*$-module structure $M_{*}\times \KMW_*\to M_*$
yielding a $\KMW_*$-module structure
\[
H^i_S(Y,M_*)\times \KMW_* \to H^i_S(Y,M_*)
\]
for a closed subset $S\subset Y\in \Sm_k$.
Moreover, for $Y_1,Y_2\in \Sm_k$ and closed subsets $S_1\subset Y_1$, $S_2\subset Y_2$ we get external product
\[
H^i_{S_1}(Y_1,M_*)\times H^j_{S_2}(Y_2,\KMW_*) \xrightarrow{\times}  H^{i+j}_{S_1\times S_2}(Y_1\times Y_2,M_{*})
\]
induced by the external product in cohomology.
\end{dfn}

\begin{lem}\label{frcomextprod}
For a framed homotopy module $M_*$, smooth varieties $X_1,Y_1,X_2,Y_2\in \Sm_k$, closed subsets $S_1\subset Y_1, S_2\subset Y_2$ and $a_1\in \Fr_+(X_1,Y_1)$, $a_2\in \Fr_+(X_2,Y_2)$ the following diagram commutes.
\[
\xymatrix{
H^i_{S_1}(Y_1,M_*)\times H^j_{S_2}(Y_2,\KstarMW)\ar[r]^(0.55)\times \ar[d]^{a^*_1\times a_2^*} & H^{i+j}_{S_1\times S_2}(Y_1\times Y_2,M_{*})\ar[d]^{(a_1\boxtimes a_2)^*}\\
H^i_{\widetilde{S}_1}(X_1,M_*)\times H^j_{\widetilde{S}_2}(X_2,\KstarMW)\ar[r]^(0.55)\times & H^{i+j}_{\widetilde{S}_1\times \widetilde{S}_2}(X_1\times X_2,M_{*})
}
\]
Here $\widetilde{S}_1=a_1^{-1}(S_1)$, $\widetilde{S}_2=a_2^{-1}(S_2)$ in the notation of Definition~\ref{frcohaction}.
\end{lem}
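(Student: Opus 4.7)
The plan is to express every map in the diagram via the $\Ext$-description of Definition~\ref{dfn:cohext}, where the action of a framed correspondence becomes literal precomposition with an explicit morphism of pointed sheaves and the external product becomes smash product followed by the $\KMW_*$-module pairing. Once both routes through the square are written in this form, commutativity will amount to a bifunctoriality statement for morphisms in $\mathrm{D}(\Ab)$.

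More precisely, I would first fix representatives $a_i = (Z_i, U_i, \phi_i, f_i) \in \Fr_{n_i}(X_i,Y_i)$, write $\widetilde S_i = a_i^{-1}(S_i)$, and use Definition~\ref{dfn:cohext} to identify
\[
H^i_{S_1}(Y_1,M_*) \cong \Ext^{n_1+i}\bigl(\Z[(Y_1/Y_1{-}S_1)\w\PP^{\w n_1}],\,M_{*+n_1}\bigr)
\]
and similarly for $\KMW$-cohomology (using $\HZF(\Gm^{\w n})\cong \KMW_n$ from Definition~\ref{KMWmodule}). Under this identification, by Definition~\ref{frcohaction}, $a_i^*$ is precomposition with the sheaf map $\alpha_i\colon (X_i/X_i{-}\widetilde S_i)\w\PP^{\w n_i} \to (Y_i/Y_i{-}S_i)\w T^{n_i}$ induced by $a_i$. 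The external product pairing of Definition~\ref{KMWmodule}, meanwhile, is obtained by smashing representatives and postcomposing with the module multiplication $\mu\colon M_{*}\otimes \KMW_*\to M_*$ that encodes the framed action of $\HZF(\Gm^{\w n})$ on $M_{-n}$.

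The main computation is then to compare the sheaf morphism induced by $a_1\boxtimes a_2$ with $\alpha_1\w\alpha_2$. By Definition~\ref{def:frextprod}, the external product $a_1\boxtimes a_2$ is the composition
\[
X_{1+}\w X_{2+}\w\PP^{\w n_1}\w\PP^{\w n_2} \xrightarrow{tw_1} X_{1+}\w\PP^{\w n_1}\w X_{2+}\w\PP^{\w n_2} \xrightarrow{a_1\w a_2} Y_{1+}\w T^{n_1}\w Y_{2+}\w T^{n_2}\xrightarrow{tw_2} Y_{1+}\w Y_{2+}\w T^{n_1+n_2},
\]
so passing to the pointed quotients by the supports, the sheaf map induced by $a_1\boxtimes a_2$ is literally $tw_2\circ(\alpha_1\w\alpha_2)\circ tw_1$, with the same permutations of $\PP^{\w n_i}$ and $T^{n_i}$ factors that appear in the standard definition of the external product in cohomology.

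Finally, both routes through the square are given by a single morphism in $\mathrm{D}(\Ab)$: smash the two chosen representatives of the cohomology classes, permute with $tw_1$ to reassemble the $X_i$ and $\PP^{\w n_i}$ factors, apply $\alpha_1\w\alpha_2$, permute back with $tw_2$, and then apply the module pairing $\mu$. The top-then-right route performs these steps in the order (pair $\mu$, then precompose with $a_1\boxtimes a_2$), while the left-then-bottom route does them as (precompose with $\alpha_1$ and $\alpha_2$ separately, then pair with $\mu$); equality is bifunctoriality of Ext together with the fact, built into Definition~\ref{KMWmodule}, that $\mu$ is a morphism of framed sheaves (which is why $a_2^*$ on $\KMW_*$ interacts correctly with $\mu$). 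The main subtlety is keeping careful track of the twists $tw_1,tw_2$ and of the permutation between $\PP^{\w n}$, $T^n$ and $\Gm^{\w n}\w S^n$ implicit in the isomorphisms of Definition~\ref{dfn:cohext}; these all match because the external product in cohomology was defined using the same standard convention that produced $tw_1,tw_2$ in Definition~\ref{def:frextprod}.
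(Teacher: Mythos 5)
Your proposal is correct and follows exactly the route the paper takes: the paper's entire proof is the one-line observation that $a_1^*$, $a_2^*$ and $(a_1\boxtimes a_2)^*$ are all induced by morphisms of sheaves, so that commutativity reduces to compatibility of the external product in cohomology with such pullbacks. You have simply written out in full the bookkeeping (the $\Ext$-identifications and the twists $tw_1,tw_2$) that the paper declares straightforward.
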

\begin{proof}
Straightforward, since all the maps $a_1^*,a_2^*$ and $(a_1\boxtimes a_2)^*$ are induced by the respective morphisms of sheaves.
\end{proof}

\begin{lem} \label{lem:GWviacor}
Let $M_*$ be a homotopy module, $S\subset Y\in \Sm_k$ be a closed subset and $a\in \Fr_n(Y,k)$. Then
\[
((\id_Y\boxtimes a)\circ \Delta_Y)^*(\alpha) = \alpha \cdot \overline{a}
\]
for every $\alpha\in H^i_S(Y,M_*)$. Here
\begin{itemize}
	\item 
	$\Delta_Y\colon Y\to Y\times Y$ is the diagonal morphism,
	\item
	$((\id_Y\boxtimes a)\circ \Delta_Y)^*$ is given by Definition~\ref{frcohaction},
	\item
	$\overline{a}\in \KMW_0(Y)$ is the image of $a$ under the composition
	\[
	\Fr_n(Y,k)\to \HZF(k)(Y) \cong \KMW_0(Y),
	\]
	\item
	$\alpha \cdot \overline{a}$ is given by the module structure of Definition~\ref{KMWmodule}.
\end{itemize}
\end{lem}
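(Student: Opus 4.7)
The plan is to show that both sides of the claimed equality equal the composite $\Delta_Y^* \circ (\id_Y \boxtimes a)^*$ computed on cohomology with support in the sense of Definition~\ref{frcohaction}.

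Starting with the right-hand side, I would invoke Definition~\ref{KMWmodule}. The degree-zero part of the $\KMW_*$-module structure on $M_*$ arises from the framed pairing $M(U) \times \Fr_+(U, \Spec k) \to M(U)$ given by $(m,b) \mapsto \Delta_U^* (\id_U \boxtimes b)^*(m)$, and this pairing descends through the identification $\HZF(k) \cong \KMW_0$. Since $\overline{a}$ is by construction the image of $a$ along $\Fr_n(Y,k) \to \HZF(k)(Y)$, the class $\overline{a}$ may be evaluated on $\alpha$ using $a$ itself as a lift. Promoting this formula from sections to cohomology with support via the Ext description of Definition~\ref{dfn:cohext} yields $\alpha \cdot \overline{a} = \Delta_Y^* (\id_Y \boxtimes a)^*(\alpha)$, with the starred operations interpreted as in Definition~\ref{frcohaction}.

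For the left-hand side, I would appeal to the functoriality of the $\Fr_+$-action on cohomology: for composable framed correspondences $b$ and $c$ one has $(c \circ b)^* = b^* \circ c^*$, because the action in Definition~\ref{frcohaction} is induced by the underlying morphism of pointed sheaves (Lemma~\ref{Vlemma} together with the canonical equivalence $\PP^{\wedge n} \sim T^n$), while the composition in $\Fr_+(k)$ is precisely composition of such morphisms (Definition~\ref{def:frn}); the relevant supports satisfy $(c \circ b)^{-1}(S) \subseteq b^{-1}(c^{-1}(S))$, making the composite well-defined at the level of cohomology with support. Taking $b = \Delta_Y$ and $c = \id_Y \boxtimes a$ gives $((\id_Y \boxtimes a) \circ \Delta_Y)^* = \Delta_Y^* \circ (\id_Y \boxtimes a)^*$, which matches the right-hand side.

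The main obstacle is to confirm that the descent of the framed pairing from $\Fr_+(-,\Spec k)$ to $\HZF(k)$ remains compatible with the Ext-based extension of the action to cohomology with support; in other words, that $\alpha \cdot \overline{a}$, originally defined by a module structure on sheaf sections, admits the explicit lift formula $\Delta_Y^*(\id_Y \boxtimes a)^*(\alpha)$ in $H^i_S(Y, M_*)$ as well. Once this bookkeeping through the isomorphisms of Definition~\ref{dfn:cohext} is completed, the identity follows essentially tautologically.
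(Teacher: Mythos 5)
Your proposal is correct and is essentially the paper's own argument: the paper simply observes that both sides are given by the single composite of sheaf morphisms $\alpha \circ (\id_{Y/Y-S} \wedge a)\circ (\Delta_Y\wedge \id)$, which is exactly your identification of both sides with $\Delta_Y^*\circ(\id_Y\boxtimes a)^*$ after unwinding Definitions~\ref{frcohaction} and~\ref{KMWmodule}. The ``bookkeeping'' you flag (compatibility of the module structure on cohomology with support with the lift of $\overline{a}$ to $a$) is precisely what the paper's one-line proof absorbs by writing the common composite explicitly.
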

\begin{proof}
	Both sides of the equality are given by the composition $\Xi=\alpha \circ (\id_{Y/Y-S} \wedge a)\circ (\Delta_Y\wedge \id)$,
	\[
	\Xi\colon (Y/Y-S)\wedge \PP^{\w n} \to (Y/Y-S)\wedge Y_+ \wedge \PP^{\w n} \to (Y/Y-S)\wedge T^n \to M_{*+n}[n+i]. \qedhere
	\]
\end{proof}

\begin{lem}\label{matrixdet} For $Y\in \Sm_k$ and $A\in\mathrm{GL}_{n}(Y)$ consider the correspondence 
	\[
	c_A=(Y\times\{0\}, \A^n_Y, (x_1,\hdots,x_n)\cdot A,\pi_Y)\in\Fr_n(Y,Y)
	\]
	with $x_i$ being the coordinates on $\A^n$ and $\pi_Y\colon \A^n_Y\to Y$ being the projection. Then for a framed homotopy module $M_*$ and a closed subset $S\subset Y$ the induced map
	\[
	c_A^*\colon H^i_S(Y,M_*)\to H^i_S(Y,M_*)
	\]
	coincides with the multiplication by $\langle\det(A)\rangle \in \underline{\GW}(Y)$.
\end{lem}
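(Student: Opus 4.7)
The plan is to decompose $c_A$ via the diagonal and an external product with a correspondence into $\Spec k$, apply Lemma~\ref{lem:GWviacor} to reduce to a computation in $\KMW_0(Y)\cong\underline{\GW}(Y)$, and then identify the resulting class with $\langle\det A\rangle$ via $\A^1$-homotopy invariance of $\underline{\GW}$ and a reduction to the one-dimensional case.

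First I would verify the identity
\[
c_A = (\id_Y \boxtimes a_A) \circ \Delta_Y,
\]
where $a_A=(Y\times\{0\},\A^n_Y,(x_1,\ldots,x_n)\cdot A,\pi_k)\in\Fr_n(Y,k)$ and $\pi_k\colon \A^n_Y\to\Spec k$ is the structure map. This is a direct unwinding of Definitions~\ref{def:frextprod} and~\ref{def:frnxy}: the external product $\id_Y\boxtimes a_A$ is a correspondence in $\Fr_n(Y\times Y,Y)$ with support $Y\times Y\times\{0\}$, framing $(y_1,y_2,x)\mapsto x\cdot A(y_2)$ and projection onto the first factor, which after precomposition with $\Delta_Y$ produces exactly $c_A$. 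Lemma~\ref{lem:GWviacor} then gives $c_A^*(\alpha)=\alpha\cdot\overline{a_A}$, where $\overline{a_A}\in\HZF(k)(Y)\cong\KMW_0(Y)\cong\underline{\GW}(Y)$.

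It remains to show $\overline{a_A}=\langle\det A\rangle$. Since $\underline{\GW}(Y)\hookrightarrow\underline{\GW}(k(Y))$ is injective by \cite[Theorem~2.15(3)]{GPhi}, we may assume $Y=\Spec F$ is the spectrum of a field. Gaussian elimination over $F$ writes $A=UDV$ with $U,V$ products of elementary matrices $I_n+tE_{ij}$ and $D=\operatorname{diag}(d_1,\ldots,d_n)$ satisfying $\prod d_i=\det A$. Replacing each factor $I_n+tE_{ij}$ by the linear path $I_n+stE_{ij}$ over $\A^1_F$ assembles into a family $A_s\in\mathrm{GL}_n(F[s])$ with $A_0=D$ and $A_1=A$, hence into a framed correspondence $a_{A_s}\in\Fr_n(\A^1_F,k)$. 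The strict $\A^1$-invariance of $\underline{\GW}$ forces $i_0^*\overline{a_{A_s}}=i_1^*\overline{a_{A_s}}$, so $\overline{a_A}=\overline{a_D}$. The external product of framed correspondences (Definition~\ref{def:frextprod}) gives $\overline{a_D}=\overline{a_{d_1}}\cdots\overline{a_{d_n}}$, and combining the one-dimensional identification $\overline{a_u}=\langle u\rangle$ with multiplicativity $\langle a\rangle\langle b\rangle=\langle ab\rangle$ in $\GW$ completes the computation.

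The main obstacle is the one-dimensional base case $\overline{a_u}=\langle u\rangle$ for $u\in F^\times$. This amounts to an explicit identification of the Garkusha--Panin isomorphism $\HZF(k)(F)\cong\GW(F)$ of \cite[Corollary~11.3]{GP} on the class of a linear framed correspondence supported at a rational point, and tracing through the constructions requires some care with sign and orientation conventions in Morel's Milnor--Witt $K$-theory.
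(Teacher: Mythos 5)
Your proof follows the same route as the paper's: the identity $c_A=(\id_Y\boxtimes a)\circ\Delta_Y$, an appeal to Lemma~\ref{lem:GWviacor} to reduce to computing the class of $a$ in $\HZF(k)(Y)\cong\underline{\GW}(Y)$, and a reduction of $A$ modulo elementary matrices and $\A^1$-homotopy to the rank-one diagonal case. The only (immaterial) differences are that the paper works over the local rings $\Os_{Y,y}$ and quotes \cite[III, Lemma 1.4]{Weibel} to write $A=((\det A)\oplus I_{n-1})\cdot B_y$ with $B_y$ elementary, rather than passing to $k(Y)$ and performing Gaussian elimination, and it treats your ``main obstacle'' $\overline{a_u}=\langle u\rangle$ as part of the known identification $\HZF(k)\cong\KMW_0$ from \cite[Corollary~11.3]{GP} and \cite{Neshitov}, so no gap is created by leaving it to the cited literature.
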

\begin{proof}
	We have
	\[
	c_A=(\id_Y \boxtimes a)\circ \Delta_Y
	\]
	for $a=(Y\times\{0\}, \A^n_Y, (x_1,\hdots,x_n)\cdot A,\pi)\in \Fr_n(Y,k)$ with $\pi\colon \A^n_Y\to \Spec k$ being the projection.
	Lemma~\ref{lem:GWviacor} yields that $c_A^*$ coincides with the multiplication by the image of $a$ in 
	\[
	\HZF(k)(Y)\cong \KMW_0(Y)=\underline{\GW}(Y).
	\]
	Over a local ring every matrix of determinant $1$ is elementary \cite[III, Lemma 1.4]{Weibel}, whence for every $y\in Y$ we have $A=((\det A)\oplus I_{n-1})\cdot B_y$ for the unit matrix $I_{n-1}$ and some elementary matrix $B_y\in \mathrm{GL}(\Os_{Y,y})$. Since $B_y$ is $\A^1$-homotopy equivalent to the identity matrix then the image of $a$ in $\underline{\GW}(Y)$ coincides with $\langle\det(A)\rangle$.
\end{proof}

\section{Framing as a trivialization of the conormal sheaf}\label{ftns}

\begin{dfn}
For $X,Y\in\Sm_k$ let $\Frn_n(X,Y\wedge T^l)$ be the set of triples $c=(Z,\phi,g)$ where 
\begin{itemize}
\item $Z$ is a closed (not necessarily reduced) subscheme  of $\A^n_X$ finite over $X$,
\item $\phi=(\phi_1,\ldots,\phi_{n+l})$, $\phi_i\in H^0(Z,I/I^2)$, is a collection of global sections that generate $I/I^2$ as an $\Os_Z$-module with $I$ being the sheaf of ideals defining $Z$,
\item $g\colon Z\to Y$ is a regular map.
\end{itemize}
We refer to $Z^{red}$ as the support of $c$. For $c=(Z,\phi,g)\in\Frn_n(X,Y\wedge T^l)$ put
\[
\Sigma(c)=(Z\times 0,\Sigma(\phi),g)\in\Frn_{n+1}(X,Y\wedge T^l),
\]
where $Z\times 0$ is the closed subset in $\A^{n+1}_X=\A^n_X\times\A^1$ and $\Sigma(\phi)=(\phi_1,\ldots,\phi_{n+l},t)$ with $t$ being the coordinate function on the last copy of $\A^1$.
\end{dfn}

\begin{dfn}\label{rem:frtonorm}
	For $X,Y\in \Sm_k$ take $(Z,U,(\phi_1,\ldots,\phi_{n+l}),g)\in \Fr_n(X_+,Y_+\wedge T^l)$. Here
\begin{enumerate}
\item 
$Z$ is a closed subset of $\A^n_X$ finite over $X$,
\item
$U$ is an \'etale neighborhood of $Z$ in $\A^n_X$,
\item
$\phi=(\phi_1,\ldots,\phi_{n+l})\colon U\to\A^{n+l}_k$ such that $\phi^{-1}(0)=Z$,
\item
$g\colon U\to Y$ is a regular morphism.
\end{enumerate}
Let $\widetilde{Z}=U\times_{\A^{n+l}_k}\Spec k$ be the scheme-theoretic preimage of $\{0\}$ under the morphism $\phi\colon U\to \A^{n+l}_k$. Note that $Z=\widetilde{Z}^{red}$ whence the composition $\widetilde{Z}\to U\to\A^n_X$ is a closed embedding. Since the projection $U\to \A^n_X$ is \'etale then the canonical morphism of $\Os_{\widetilde{Z}}$-modules $\rho\colon J/J^2\to I/I^2$ is an isomorphism, where $J$ and $I$ are the sheaves of ideals defining $\widetilde{Z}$ in $U$ and $\A^n_X$ respectively. 
The classes $\overline{\phi}_1,\ldots,\overline{\phi}_{n+l}\in H^0(\widetilde{Z},J/J^2)$ generate $H^0(\widetilde{Z},J/J^2)$ as an $\Os_{\widetilde{Z}}$-module thus $\rho(\overline{\phi}_1),\ldots,\rho(\overline{\phi}_{n+l})$ generate $H^0(\widetilde{Z},I/I^2)$ as an $\Os_{\widetilde{Z}}$-module. Put 
\[
\Theta\colon \Fr_n(Y_+,X_+\wedge T^l)\to\Frn_n(Y,X\wedge T^l)
\]
to be the map given by
\[
(Z,U,(\phi_1,\ldots,\phi_{n+l}),g) \mapsto (\widetilde{Z}, \rho(\overline{\phi}_1),\ldots,\rho(\overline{\phi}_{n+l}), g|_{\widetilde{Z}}).
\]
Note that $\Sigma\circ \Theta = \Theta\circ \Sigma$.
\end{dfn}

\begin{dfn}\label{frp} Let $X,Y\in\Sm_k$ and $p\colon Y\to X$ be a finite morphism. Put $l=\dim X-\dim Y$. 
	
Denote $\Fr_n(X\xleftarrow{p} Y)$ the subset of $\Fr_n(X_+,Y_+\wedge T^l)$ consisting of $(Z,U,\phi,g)$ satisfying:
\begin{enumerate}
	\item 
	The diagram
	\[
	\xymatrix{
	U \ar[d]_\pi \ar[r]^{g} & Y \ar[d]^p \\
	\A^n_X \ar[r]^{\pi_X} &	X,
}
	\]
	commutes. Here $\pi$ and $\pi_X$ are the canonical projections.
	\item
	For the scheme-theoretic preimage $\widetilde{Z}=\phi^{-1}(0)$ the morphism $	g\colon \widetilde{Z}\to Y$	is an isomorphism.
\end{enumerate}
We refer to the elements of $\Fr_n(X\xleftarrow{p} Y)$ as framed enhancements of $p$.

Denote $\Frn_n(X\xleftarrow{p} Y)$ the subset of $\Frn_n(X,Y\wedge T^l)$ consisting of $(Z,\phi,g)$ satisfying:
\begin{enumerate}
	\item 
	The diagram
	\[
	\xymatrix{
		Z \ar[d]_i \ar[r]^{g} & Y \ar[d]^p \\
		\A^n_X \ar[r]^{\pi_X} &	X,
	}
	\]
	commutes. Here $i$ is the closed embedding and $\pi_X$ is the projection.
	\item
	The morphism $g\colon Z\to Y$ is an isomorphism.
\end{enumerate}
In other words, the elements of $\Frn_n(X\xleftarrow{p} Y)$ are pairs $(i,\phi)$ with $i\colon Y\to\A^n_X$ being a closed embedding over $X$ and $\phi$ being a trivialization of the (co-)normal bundle of $i$.

Note that $\Theta$ from Definition~\ref{rem:frtonorm} can be restricted to a map 
\[
\Theta_p\colon \Fr_n(X\xleftarrow{p}Y)\to \Frn_n(X\xleftarrow{p}Y).
\]
\end{dfn}

\begin{dfn}\label{dfn:det}
	Let $p\colon Y\to X$ be a finite map with $X,Y\in\Sm_k$. For $a\in\Fr_n(X\xleftarrow{p} Y)$ we denote
	\[
	\det a \colon \omega_{Y/X}\cong \det N_i\xrightarrow{\simeq} \Os_Y
	\]
	the trivialization induced by the trivialization $\phi$ with $\Theta_p(a)=(i,\phi)$.
\end{dfn}

\begin{lem}\label{lem:enhancefunct}
Let $X,Y,V\in \Sm_k$ and $p\colon V\to Y$, $q\colon Y\to X$ be finite morphisms. Put $l=\dim X-\dim Y$. Then for $a\in \Fr_n(X\xleftarrow{p} Y), \, b\in \Fr_m(Y\xleftarrow{q} V)$ one has
\[
(b\boxtimes \id_{T^{l}})\circ a\in\Fr_{n+m}(X\xleftarrow{q\circ p} V).
\]
\end{lem}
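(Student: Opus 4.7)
First I would unpack the data. Writing $a=(Z_a,U_a,\phi_a,f_a)\in\Fr_n(X_+,Y_+\wedge T^l)$ and $b=(Z_b,U_b,\phi_b,f_b)\in\Fr_m(Y_+,V_+\wedge T^{l'})$ with $l'=\dim Y-\dim V$, the enhancement hypotheses from Definition~\ref{frp} translate to: the finite map $Y\to X$ composed with $f_a$ equals $\pi_X\circ\pi_{U_a}$, with $f_a$ restricting to an isomorphism $\widetilde{Z}_a\xrightarrow{\simeq}Y$; and symmetrically $V\to Y$ composed with $f_b$ equals $\pi_Y\circ\pi_{U_b}$, with $f_b|_{\widetilde{Z}_b}\colon\widetilde{Z}_b\xrightarrow{\simeq}V$.

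Next I would describe the composite $c=(b\boxtimes\id_{T^l})\circ a$ explicitly as a quadruple in $\Fr_{n+m}(X_+,V_+\wedge T^{l+l'})$. Following Definition~\ref{def:frn} through Lemma~\ref{Vlemma}, the natural representative has etale neighborhood $U_c=U_a\times_Y U_b$ (using $f_a\colon U_a\to Y$ and the structural map $U_b\to\A^m_Y\to Y$), mapping etale to $\A^{n+m}_X$ through $U_c\to U_a\times\A^m\to\A^n_X\times\A^m$; framing $\phi_c=(\phi_a\circ\mathrm{pr}_1,\phi_b\circ\mathrm{pr}_2)$, up to the coordinate permutation intrinsic to the external product of Definition~\ref{def:frextprod}; target map $f_c=f_b\circ\mathrm{pr}_2\colon U_c\to V$; and support $Z_c$ equal to the underlying closed subset of $\phi_c^{-1}(0)$, which is finite over $X$ by the successive finiteness of $Z_a\to X$ and $Z_b\to Y$.

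Condition~(1) of Definition~\ref{frp} for $c$ and the composite $q\circ p$ is then a routine diagram chase using the fibre-product structure of $U_c$: one has $\pi_Y\circ\pi_{U_b}\circ\mathrm{pr}_2=f_a\circ\mathrm{pr}_1$, hence $(q\circ p)\circ f_c=q\circ f_a\circ\mathrm{pr}_1=\pi_X\circ\pi_{U_a}\circ\mathrm{pr}_1=\pi_X\circ\pi_{U_c}$, as required.

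The main obstacle is condition~(2), that $f_c$ restricts to an isomorphism $\widetilde{Z}_c\xrightarrow{\simeq}V$. Scheme-theoretically $\widetilde{Z}_c=\phi_c^{-1}(0)$ is the intersection in $U_c$ of the two vanishing loci of the components of $\phi_c$, which by base change identifies with $\widetilde{Z}_a\times_Y\widetilde{Z}_b$; the isomorphism $f_a|_{\widetilde{Z}_a}\colon\widetilde{Z}_a\xrightarrow{\simeq}Y$ then collapses this fibre product to $\widetilde{Z}_b$, and $f_b|_{\widetilde{Z}_b}\colon\widetilde{Z}_b\xrightarrow{\simeq}V$ gives the target isomorphism; by construction the composite is the restriction of $f_c$. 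The delicate scheme-theoretic point is that the vanishing of $\phi_a\circ\mathrm{pr}_1$ on $U_c$ really is the flat pullback of $\widetilde{Z}_a\subset U_a$, and that the two loci intersect with the expected scheme structure; this follows because $U_b\to\A^m_Y\to Y$ is smooth, so $U_c\to U_a$ is flat, and the conormal-sheaf interpretation of Definition~\ref{rem:frtonorm} propagates through the composition.
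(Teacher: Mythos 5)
Your proposal is correct and is exactly the direct verification that the paper dismisses with ``Straightforward'': unpack the composite as the quadruple on $U_a\times_Y U_b$, check the commuting square from the two enhancement squares and the fibre-product relation, and identify $\widetilde{Z}_c$ with $\widetilde{Z}_a\times_Y\widetilde{Z}_b\cong\widetilde{Z}_b\cong V$. The only remark is that your final flatness worry is unnecessary: the scheme-theoretic zero locus of the pulled-back generators is the fibre product $\mathrm{pr}_1^{-1}(\widetilde{Z}_a)\cap\mathrm{pr}_2^{-1}(\widetilde{Z}_b)$ for \emph{any} morphism, since extending an ideal along a ring map is generated by the images of its generators.
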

\begin{proof}
Straightforward.
\end{proof}

\begin{lem}\label{lem:frameeqsur}
Let $X,Y\in\Sm_k$ and $p\colon Y\to X$ be a finite morphism. Suppose that $Y$ is affine and that there is an \'etale map $\pi\colon Y\to\A^d_k$. Then the map
\[
\Theta_p\colon \Fr_n(X\xleftarrow{p} Y)\to\Frn_n(X\xleftarrow{p} Y)
\]
introduced in Definition~\ref{rem:frtonorm} is surjective.
\end{lem}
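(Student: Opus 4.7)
The plan is to reverse-engineer $\Theta_p$ by starting from data $(i,\phi)\in\Frn_n(X\xleftarrow{p}Y)$---a closed embedding $i\colon Y\hookrightarrow\A^n_X$ over $X$ with image $Z$, equipped with a trivialization $\phi=(\phi_1,\ldots,\phi_{n+l})$ of the conormal sheaf $I/I^2$---and producing a quadruple $(Z,U,\tilde\phi,g)\in\Fr_n(X\xleftarrow{p}Y)$ that maps back to it. The construction naturally splits into two tasks: lifting $\phi$ to genuine regular functions on an étale neighborhood of $Z$, and constructing an $X$-morphism $g\colon U\to Y$ restricting to $i^{-1}$ on $Z$.

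For the lifting step, I would choose an affine Zariski open $V\subset\A^n_X$ containing the affine subscheme $Z=i(Y)$; such $V$ exists because $\A^n_X$ is quasi-projective and $Z$ is affine. The short exact sequence $0\to I^2|_V\to I|_V\to (I/I^2)|_V\to 0$, combined with the vanishing of higher coherent cohomology on an affine, lets me lift each $\phi_k\in H^0(Z,I/I^2)$ to $\tilde\phi_k\in\Gamma(V,I|_V)$. Nakayama's lemma applied at each $z\in Z$, using that $Z\hookrightarrow V$ is a regular embedding of codimension $n+l$, then shows---after shrinking $V$ if necessary---that $(\tilde\phi_1,\ldots,\tilde\phi_{n+l})$ generates $I|_V$ globally, so the resulting morphism $\tilde\phi\colon V\to\A^{n+l}_k$ satisfies $\tilde\phi^{-1}(0)=Z$ scheme-theoretically.

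For the construction of $g$, I would leverage the étale hypothesis on $\pi\colon Y\to\A^d_k$. Consider the finite morphism $V\times_X Y\to V$ together with its canonical section $s_0\colon Z\hookrightarrow V\times_X Y$ sending $z$ to $(z,i^{-1}(z))$. The existence of the étale projection $\pi$, combined with perfectness of $k$, supplies the input needed for Hensel's lemma: the algebra $\Os^h_{V,z}\otimes_{\Os_X}\Os_Y$ splits as a product of Henselian local rings, one of which is identified with $\Os^h_{V,z}$ itself via $s_0$. Spreading this decomposition out produces an étale neighborhood $U\to V$ of $Z$ such that $U\times_X Y=s_0(U)\sqcup W$ with $s_0(U)\xrightarrow{\sim}U$ under the first projection, and the composition $g\colon U\cong s_0(U)\hookrightarrow U\times_X Y\to Y$ defines the desired morphism. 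Because $g$ is constructed inside the $X$-fibered product $V\times_X Y$, the compatibility $p\circ g=\pi_X\circ\pi_U$ is automatic.

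Finally, I would verify that $(Z,U,\tilde\phi\circ\pi_U,g)$ lies in $\Fr_n(X\xleftarrow{p}Y)$ and that $\Theta_p$ sends it back to $(i,\phi)$; this is essentially bookkeeping using the canonical identification $J/J^2\cong I/I^2$ arising from the étale map $U\to V$, which transports the classes of $\tilde\phi_k|_U$ to the original $\phi_k$ by construction, while $g|_{\tilde Z}=i^{-1}$ recovers $i$. The principal obstacle is the construction of $g$: a naive étale neighborhood built as $V\times_{\A^d_k}Y$ from an extension of $\pi\circ i^{-1}$ produces a $Y$-valued morphism whose composition with $p$ only matches $\pi_X\circ\pi_U$ on $Z$ itself, not in a full neighborhood. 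Working instead inside $V\times_X Y$ and invoking the Hensel-type splitting---which is precisely where the étale hypothesis on $\pi$ is essential---is what globalizes the $X$-compatibility from $Z$ to all of $U$.
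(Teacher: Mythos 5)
Your lifting step is correct and is essentially the paper's: lift $\phi$ to sections $\psi_i$ of $I$ over an affine neighborhood and use Nakayama to arrange that the lifts cut out $Z$ scheme-theoretically after shrinking. The gap is in your construction of $g$. The Henselian splitting you invoke does not exist in general: $V\times_X Y\to V$ is the base change of $p$, so it is unramified at $s_0(z)=(z,i^{-1}(z))$ only if $p$ is unramified at $i^{-1}(z)$, and the local factor of $\Os^{h}_{V,z}\otimes_{\Os_X}\Os_Y$ through which $s_0$ factors is identified with $\Os^{h}_{V,z}$ only if $p$ is \'etale there. In fact no morphism $g\colon U\to Y$ \emph{over $X$} extending $i^{-1}$ can exist on any nonempty \'etale neighborhood $U$ of $Z$ unless $p$ is very special: if $p$ is a closed immersion of positive codimension, such a $g$ would force the open image of the smooth morphism $U\to\A^n_X\to X$ into the nowhere dense subset $p(Y)$; if $p$ is the double cover $t=y^2$ of $\A^1$, then at a point $u\in Z$ over the branch point one would get $t=(g^{*}y)^2\in\mathfrak m_u^2$, contradicting $\mathfrak m_{\pi(u)}\Os_{U,u}=\mathfrak m_u$. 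Since the lemma is applied precisely to such $p$ (closed embeddings in Lemmas~\ref{lem:trsupiso} and~\ref{semilocalgood}, normalizations in Proposition~\ref{prop:push}), the reading of condition (1) of Definition~\ref{frp} that motivates your detour --- commutativity of $p\circ g=\pi_X\circ\pi$ on all of $U$ --- would make $\Fr_n(X\xleftarrow{p}Y)$ empty and the lemma false. That condition can only be meant on the support $\widetilde Z$, where it is automatic from the corresponding condition defining $\Frn_n(X\xleftarrow{p}Y)$; and the later constructions only ever use $g$ through its restriction to the support.

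With that reading, the approach you dismiss as naive is the correct one and is what the paper does: extend $\pi\circ g\colon Z\to\A^d_k$ (not $g$ itself) to $g'\colon U'\to\A^d_k$ on an affine Zariski neighborhood $U'$ of $Z$ on which the $\psi_i$ generate $I$; since $\pi$ is \'etale, $U''=U'\times_{\A^d_k}Y\to U'$ is \'etale and the graph of $g$ splits $Z\times_{U'}U''$ as $Z\sqcup Z'$, so $U=U''-Z'$ is an \'etale neighborhood of $Z$ in $\A^n_X$ whose second projection $\rho_Y\colon U\to Y$ restricts to $g$ on $Z$. Then $\Theta_p(Z,U,\psi\circ\rho_{U'},\rho_Y)=(Z,\phi,g)$, and the compatibility over $X$ holds where it is required, namely on $\widetilde Z$.
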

\begin{proof}
Choose $c=(Z,\phi,g)\in\Frn_n(X\xleftarrow{p} Y)$ and let $I\subset k[\A^n_X]$ be the ideal corresponding to $Z$. Take an arbitrary lift of 
$
\phi=(\phi_1,\ldots,\phi_{n+l})\in H^0(\A^n_X,I/I^2)
$
to
$
\psi=(\psi_1,\ldots,\psi_{n+l})\in H^0(\A^n_X,I).
$
Note that
$
I=\langle \psi_1,\ldots,\psi_{n+l}\rangle + I^2
$
yields that $(\psi_1,\ldots,\psi_{n+l})$ defines $Z$ in some affine Zariski neighborhood $U'$ of $Z$ in $\A^n_X$.

The composition $\pi\circ g\colon Z\to \A^d_k$ can be extended to a morphism $g'\colon U'\to \A^d_k$ making the following diagram commute.
\[
\xymatrix{
Z \ar[r]^g \ar[d] & Y \ar[d]^{\pi} \\
U' \ar[r]_{g'}&  \A^d_k
}
\]
Put $U''=U'\times_{\A^d_k} Y$. The morphism $U''\to U'$ is \'etale since $Y\to \A^d_k$ is \'etale. Moreover, $Z\times_{U'}U''=Z\sqcup Z'$ whence $U=U''-Z'$ is an \'etale neighborhood of $Z$ in $\A^n_X$. Then for the projections $\rho_{U'}\colon U\to U'$ and $\rho_{Y}\colon U\to Y$ we have
\[
\Theta_p(Z,U,\psi\circ \rho_{U'},\rho_Y)= (Z,\phi,g). \qedhere
\]
\end{proof}

\begin{lem}\label{lem:frameeqhom} Under the assumptions of Lemma~\ref{lem:frameeqsur} suppose that $\Theta_p(a)=\Theta_p(b)$
for some $a,b\in \Fr_n(X\xleftarrow{p} Y)$. Then there exists 
\[
H\in\Fr_n(X\times\A^1\xleftarrow{p\times\id_{\A^1}} Y\times\A^1)
\]
such that $H|_{X\times \A^1}=a$ and $H|_{X\times \A^1}=b$.
\end{lem}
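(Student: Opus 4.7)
The plan is to construct the homotopy $H$ by linearly interpolating the framings and, separately, the maps to $Y$; the latter step will go through the \'etale chart $\pi\colon Y \to \A^d_k$ just as in Lemma~\ref{lem:frameeqsur}. Write $a = (Z, U_a, \phi^a, g^a)$ and $b = (Z, U_b, \phi^b, g^b)$. The hypothesis $\Theta_p(a) = \Theta_p(b)$ means that the scheme-theoretic zero loci of $\phi^a$ and $\phi^b$ coincide as a single closed subscheme $\widetilde{Y} \subset \A^n_X$ (isomorphic to $Y$ via both $g^a$ and $g^b$), that the two induced conormal trivializations agree in $\Gamma(\widetilde{Y}, I/I^2)^{n+l}$, and that $g^a, g^b$ restrict to the same morphism $\widetilde{Y} \to Y$. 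After replacing both \'etale neighborhoods by an affine \'etale neighborhood $V$ of $Z$ inside $U_a \times_{\A^n_X} U_b$, I may assume all of $\phi^a, \phi^b, g^a, g^b$ are defined on the same $V$.

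The first main step is the interpolation of framings. Let $I \subset \Gamma(V, \Os_V)$ be the ideal of $\widetilde{Y}_V := V \times_{\A^n_X} \widetilde{Y}$. The agreement of trivializations means that each component of $\phi^b - \phi^a$ lies in $I^2$; since the components of $\phi^a$ generate $I$, I can write $\phi^b - \phi^a = \Psi \cdot \phi^a$ for some square matrix $\Psi$ of size $n+l$ with entries in $I$. Set $\Phi := \phi^a + t(\phi^b - \phi^a) = (E + t\Psi)\phi^a$, viewed as a morphism $V \times \A^1 \to \A^{n+l}_k$, where $E$ is the identity matrix and $t$ is the coordinate on $\A^1$. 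Since $E + t\Psi$ restricts to the identity on $\widetilde{Y}_V \times \A^1$, it is invertible in a Zariski neighborhood $V' \subset V \times \A^1$ of $\widetilde{Y}_V \times \A^1$, so $\Phi$ and the pullback of $\phi^a$ generate the same ideal on $V'$, yielding $\Phi^{-1}(0) = \widetilde{Y}_V \times \A^1$ scheme-theoretically on $V'$.

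For the map to $Y$, I mimic the proof of Lemma~\ref{lem:frameeqsur}: define $G \colon V' \to \A^d_k \times \A^1$ by $(v, t) \mapsto ((1-t)\pi g^a(v) + t\pi g^b(v), t)$ and form $W := V' \times_{\A^d_k \times \A^1} (Y \times \A^1)$, which is \'etale over $V'$. Because $g^a$ and $g^b$ agree on $\widetilde{Y}_V$ under the identification $\widetilde{Y}_V \cong Y$, the formula $(v, t) \mapsto ((v, t), (g^a(v), t))$ defines a section $s$ of $W \to V'$ restricted over $\widetilde{Y}_V \times \A^1 \subset V'$; as a section of an \'etale map, it is a clopen immersion into the closed subscheme $W \times_{V'} (\widetilde{Y}_V \times \A^1) \subset W$. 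Removing the complementary clopen piece of this fibre --- a closed subset of $W$ disjoint from $s(\widetilde{Y}_V \times \A^1)$ --- yields an \'etale neighborhood $U_H \subset W$ of $\widetilde{Y}_V \times \A^1$ in $V \times \A^1$, equipped with a morphism $g_H \colon U_H \to Y \times \A^1$ inherited from the second projection of $W$.

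Finally, set $H = (\widetilde{Y}_V \times \A^1, U_H, \Phi|_{U_H}, g_H)$; by construction this is an element of $\Fr_n(X \times \A^1 \xleftarrow{p \times \id_{\A^1}} Y \times \A^1)$. At $t = 0$, the framing restricts to $\phi^a$, while $g_H|_{t=0}$ is a morphism to $Y$ covering $\pi \circ g^a$ that agrees with $g^a$ on $\widetilde{Y}_V$; since the diagonal $Y \to Y \times_{\A^d_k} Y$ is an open immersion (as $\pi$ is \'etale), the two lifts $g_H|_{t=0}$ and $g^a$ coincide on a common \'etale refinement, so $H|_{X \times \{0\}} = a$ in $\Fr_n$, and symmetrically $H|_{X \times \{1\}} = b$. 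The main technical point to verify is the compatibility of all the shrinkings --- the Zariski neighborhood $V'$ and the deletion of extra components in $W$ --- so that both the interpolated framing is valid and the lift $g_H$ is globally well-defined; this hinges on the two vanishings, of $\Psi$ and of $g^a - g^b$, along $\widetilde{Y}_V$.
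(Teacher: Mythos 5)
Your proof is correct and follows essentially the same route as the paper's: linearly interpolate the framings (your determinant-of-$E+t\Psi$ argument is the same device as the paper's Nakayama step for splitting off $Z\times\A^1$ from the zero locus), then build the map to $Y\times\A^1$ by passing through the \'etale chart $\pi\colon Y\to\A^d_k$, forming the fibre product, and deleting the superfluous clopen component. Your explicit formula $(1-t)\pi g^a+t\pi g^b$ for $G$ and the check at $t=0,1$ via the openness of the diagonal of $\pi$ are just a concrete instance of the paper's extension of $(\pi\times\id)\circ(g\sqcup g')$ from $U\times\{0,1\}$, so no substantive difference.
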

\begin{proof}
Let $a=(Z,U,\phi,g)$ and $b=(Z,U',\phi',g')$. Passing to the \'etale neighborhood $U\times_{\A^n_X}U'$ of $Z$ we may assume that $U=U'$. Then both the collections of regular functions $\phi,\phi'$ generate the same ideal
\[
I=\langle \phi_1,\ldots,\phi_{n+l} \rangle=\langle \phi'_1,\ldots,\phi'_{n+l}\rangle\text{ in }k[U]
\]
and, moreover, 
\[
\phi_i=\phi'_i \mod I^2,\,i=1,\hdots, n+l.
\]
Put $\Phi_i=\phi_i+t(\phi'_i-\phi_i)\in k[U\times\A^1]$. Let $J$ denote the ideal $J=I\otimes_kk[t]$ in $k[U\times\A^1]$. Then the elements $\Phi_i$ lie in $J$ and $\Phi_i=\phi_i \mod J^2$. Let $J'$ denote the ideal $J'=\langle\Phi_1,\ldots,\Phi_{n+l}\rangle$. Then $J\cdot J/J'=J/J'$ and by Nakayama's lemma $r\cdot J/J'=0$ for some element $r$ in $k[U\times\A^1]$ such that $r=1\mod J$. Then $U\times\A^1-Z(r)$ is an open neighborhood of $Z\times\A^1$ and we have
\[
Z(\Phi_1,\hdots, \Phi_{n+l})\cap(U\times\A^1-Z(r))=(Z\times \A^1),
\] so
\[
Z(\Phi)=Z(\Phi_1,\hdots, \Phi_{n+l})=(Z\times \A^1) \sqcup \mathcal{Z}'.
\]
Put $\mathcal{U}'=(U\times \A^1) - \mathcal{Z}'$. Note that $\mathcal{Z}'\cap (U\times \{0,1\})=\emptyset$, i.e. $U\times \{0,1\}\subset \mathcal{U}'$.

Take an \'etale map $\pi\colon Y\to\A^d_k$. Similar to the proof of the previous Lemma we can extend 
\[
(\pi\times \id_{\A^1})\circ (g\sqcup g') \colon U\times \{0,1\}\to \A^d_k\times \A^1
\]
to a map $G\colon \mathcal{U}'\to \A^d_k\times \A^1$ making the following diagram commute.
\[
\xymatrix{
U\times\{0,1\} \ar@/^1.5pc/[drr]_{g\sqcup g'}  \ar@/_1.5pc/[ddr]^i \ar[dr] & & \\
& \mathcal{U}'' \ar[r]_(0.4){\rho_{Y\times \A^1}} \ar[d]  & Y\times \A^1 \ar[d]^{\pi\times \id_{\A^1}}\\
& \mathcal{U}' \ar[r]^(0.4){G} & \A^d_k\times \A^1
}
\]
Here $i$ is the closed embedding and $\mathcal{U}''=\mathcal{U}'\times_{\A^d_k\times\A^1}(Y\times\A^1)$.  We have 
\[
(Z\times \A^1)\times_{\mathcal{U}'} {\mathcal{U}''} = (Z\times \A^1)\sqcup \mathcal{Z}''.
\]
Put $\mathcal{U}=\mathcal{U}''-\mathcal{Z}''$. Then 
\[
H=(Z\times\A^1,\mathcal{U},\Phi|_{\mathcal{U}},\rho_{Y\times \A^1}|_{\mathcal{U}})\in \Fr_n(X\times\A^1\xleftarrow{p\times \id_{\A^1}} Y\times\A^1)
\]
gives the claim.
\end{proof}

\begin{dfn} 
	For $X,Y\in\Sm_k$ and a finite map $p\colon Y\to X$ we say that
	$a,b \in \Fr_n(X\xleftarrow{p} Y)$ (resp. $\Frn_n(X\xleftarrow{p}Y)$) are $\A^1$-homotopic if there exists $H\in \Fr_n(X\times\A^1\xleftarrow{p\times \id_{\A^1}} Y\times\A^1)$ (resp. $\Frn_n(X\times\A^1\xleftarrow{p\times \id_{\A^1}} Y\times\A^1)$) such that 
	\[
	H|_{X\times \{0\}}=a, \quad H|_{X\times \{1\}}=b.
	\]
	Denote by $\overline{\Fr}_n(X\xleftarrow{p} Y)$ (resp. $\hFrn_n(X\xleftarrow{p} Y)$) the set of equivalence classes of $\A^1$-homotopic correspondences.
\end{dfn}

\begin{cor}\label{lem:frameeq} Let $X,Y\in\Sm_k$ and $p\colon Y\to X$ be a finite morphism. Suppose that $Y$ is affine and that there is an \'etale map $\pi\colon Y\to\A^d_k$, then the map introduced in Definition~\ref{rem:frtonorm} induces a bijection
\[
\Theta_p\colon \overline{\Fr}_n(X\xleftarrow{p} Y)\xrightarrow{\simeq} \hFrn_n(X\xleftarrow{p} Y).
\]
\end{cor}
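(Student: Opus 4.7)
The plan is to combine Lemmas~\ref{lem:frameeqsur} and~\ref{lem:frameeqhom} in a standard ``surjectivity plus injectivity up to homotopy'' pattern. First I would check that $\Theta_p$ is compatible with $\A^1$-homotopies in the obvious way: given a homotopy $H\in\Fr_n(X\times\A^1\xleftarrow{p\times\id_{\A^1}}Y\times\A^1)$, the construction of $\Theta$ in Definition~\ref{rem:frtonorm} commutes with restriction to $t=0,1$, because both the scheme-theoretic zero locus and the normal-bundle trivialization commute with pullback along the closed embeddings $X\hookrightarrow X\times\A^1$. Hence $\Theta_{p\times \id_{\A^1}}(H)$ is a homotopy in $\Frn_n$ between $\Theta_p(H|_{t=0})$ and $\Theta_p(H|_{t=1})$, and $\Theta_p$ descends to a well-defined map $\overline{\Theta}_p\colon \overline{\Fr}_n(X\xleftarrow{p}Y)\to \hFrn_n(X\xleftarrow{p}Y)$.

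Surjectivity of $\overline{\Theta}_p$ is immediate from Lemma~\ref{lem:frameeqsur}: any representative $c$ of a class in $\hFrn_n$ lifts to some $\widetilde{c}\in\Fr_n(X\xleftarrow{p}Y)$ with $\Theta_p(\widetilde{c})=c$. For injectivity, suppose $a,b\in\Fr_n(X\xleftarrow{p}Y)$ satisfy $\overline{\Theta}_p([a])=\overline{\Theta}_p([b])$, and choose a witnessing homotopy $H_0\in\Frn_n(X\times\A^1\xleftarrow{p\times\id_{\A^1}}Y\times\A^1)$ from $\Theta_p(a)$ to $\Theta_p(b)$. Since $Y\times\A^1$ is affine and $\pi\times\id_{\A^1}\colon Y\times\A^1\to \A^{d+1}_k$ is \'etale, Lemma~\ref{lem:frameeqsur} applies to the finite map $p\times\id_{\A^1}$ and produces a lift $\widetilde{H}\in\Fr_n(X\times\A^1\xleftarrow{p\times\id_{\A^1}}Y\times\A^1)$ with $\Theta_{p\times\id_{\A^1}}(\widetilde{H})=H_0$. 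Set $a'=\widetilde{H}|_{t=0}$ and $b'=\widetilde{H}|_{t=1}$; then by compatibility of $\Theta$ with restriction, $\Theta_p(a')=H_0|_{t=0}=\Theta_p(a)$ and similarly $\Theta_p(b')=\Theta_p(b)$. Applying Lemma~\ref{lem:frameeqhom} twice, we get $a\sim a'$ and $b\sim b'$, while $\widetilde{H}$ itself gives $a'\sim b'$. Concatenating (using that $\A^1$-homotopy is an equivalence relation, as witnessed by elementary reparametrization homotopies on $\A^1$) yields $a\sim b$, hence $[a]=[b]$ in $\overline{\Fr}_n(X\xleftarrow{p}Y)$.

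The only real subtlety is the compatibility $\Theta_p\circ(-|_{t=i})=(-|_{t=i})\circ \Theta_{p\times\id_{\A^1}}$, which I expect to be a direct verification from the definitions, since pulling back the scheme-theoretic preimage of $0$ and the induced generators of the conormal sheaf along the regular embedding $\{t=i\}\hookrightarrow \A^1$ commutes with the formation of these data. Transitivity of $\A^1$-homotopy is also used implicitly, but can be established by a standard argument gluing two homotopies along $\A^1\sqcup_{\{0\}}\A^1\to\A^1$ together with homotopy invariance, all of which is routine in the framed setting.
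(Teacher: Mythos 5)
Your argument is correct and is exactly the intended combination of Lemmas~\ref{lem:frameeqsur} and~\ref{lem:frameeqhom}, which is all the paper's proof says; the extra steps you supply (compatibility of $\Theta$ with restriction to $t=0,1$, lifting the witnessing homotopy over $p\times\id_{\A^1}$, and concatenating) are the right way to fill in the details.
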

\begin{proof}
	Follows from Lemmas~\ref{lem:frameeqsur} and~\ref{lem:frameeqhom}.
\end{proof}

\begin{dfn} 
Let $X,Y\in\Sm_k$ and $p\colon Y\to X$ be a finite morphism. Put $l=\dim X-\dim Y$. For $A\in\mathrm{GL}_{n+l}(k[Y])$ and $(i,\phi)\in \Frn_n(X\xleftarrow{p} Y)$ put
\[
(i,\phi)\cdot A = (i,\phi\cdot A) \in \Frn_n(X\xleftarrow{p} Y).
\]
Here on the right-hand side we take the matrix multiplication of the row $\phi=(\phi_1,\hdots, \phi_{n+l})$ by the matrix $A$.
\end{dfn}


\begin{lem}\label{lem:movemb}
Let $X,Y\in\Sm_k$ and $p\colon Y\to X$ be a finite morphism. Put $l=\dim X-\dim Y$. Suppose that $Y$ is affine. Then for every $(i,\phi),\,(j,\psi)\in \Frn_n(X\xleftarrow{p}Y)$ there exists a matrix $A\in \operatorname{GL}_{2n+l}(k[Y])$ such that 
\[
\overline{\Sigma^n(i,\phi)} = \overline{\Sigma^n(j,\psi)\cdot A} \in \hFrn_{2n}(X\xleftarrow{p}Y).
\]
\end{lem}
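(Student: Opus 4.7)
The plan is to construct an $\A^1$-homotopy (or a chain of such, which suffices for equivalence in $\hFrn_{2n}(X \xleftarrow{p} Y)$) from $\Sigma^n(i,\phi)$ to $\Sigma^n(j,\psi) \cdot A$ for a suitable matrix $A \in \mathrm{GL}_{2n+l}(k[Y])$, by passing through three $\A^1$-families of closed embeddings $E^{(k)} \colon Y \times \A^1 \hookrightarrow \A^{2n}_X \times \A^1$ over $X \times \A^1$:
\[
E^{(1)}_s(y) = (i(y),\, s\, j(y)), \quad E^{(2)}_s(y) = M_s \cdot (i(y),\, j(y)), \quad E^{(3)}_s(y) = (j(y),\, -(1-s)\, i(y)),
\]
where $M_s$ is a path in $\mathrm{SL}_{2n}(k[s])$ from the identity to $\left(\begin{smallmatrix} 0 & I_n \\ -I_n & 0 \end{smallmatrix}\right)$; such a path exists since the target matrix is a product of elementary matrices and each elementary matrix is $\A^1$-homotopic to the identity. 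Chainwise these carry the embedding $(i,0) \to (i,j) \to (j,-i) \to (j,0)$, and each $E^{(k)}_s$ is a closed embedding for every $s$: in stages $1$ and $3$ because one of the two $\A^n$-components is a constant closed embedding, and in stage $2$ because the matrices $M_s$ are invertible.

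Next I would trivialize the conormal bundle in each stage. Since $Y$ is affine, the surjection $k[\A^n_X] \twoheadrightarrow k[Y]$ induced by $i$ lifts the composition $j \circ i^{-1}$ to a morphism $\tilde{j}\colon \A^n_X \to \A^n$, and symmetrically $i \circ j^{-1}$ lifts to $\tilde{i}$. Writing $(x,t)$ for the two blocks of $\A^n$-coordinates on $\A^{2n}_X$, the ideal of $\mathrm{image}(E^{(1)})$ in $\A^{2n}_X \times \A^1$ is locally generated by lifts of $\phi$ together with the $n$ functions $t_k - s\,\tilde{j}_k(x)$; the resulting trivialization specializes at $s=0$ to $(\phi, t_1,\ldots,t_n)$, matching the framing of $\Sigma^n(i,\phi)$. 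Symmetrically, the ideal of $\mathrm{image}(E^{(3)})$ is generated by lifts of $\psi$ together with $t_k + (1-s)\,\tilde{i}_k(x)$, which at $s=1$ restricts to $(\psi, t_1,\ldots,t_n) = \Sigma^n(j,\psi)$. For stage $2$, I would start the framing at $s=0$ from the end framing of $E^{(1)}$ and propagate it along the family via the natural $M_s$-action on the conormal bundle, so that no matrix adjustment is needed at the stage $1$/$2$ gluing.

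At the stage $2$/$3$ transition (embedding $(j,-i)$), the two constructed framings are both bases of the same rank-$(2n+l)$ conormal sheaf over the affine $Y$, and hence differ by a unique matrix $B \in \mathrm{GL}_{2n+l}(k[Y])$. Right-multiplying the framings of $E^{(3)}$ throughout by $B^{-1}$, a valid operation since $B$ is independent of $s$, aligns the two framings at the gluing and converts the endpoint framing at $s=1$ into $(\psi, t) \cdot B^{-1} = \Sigma^n(j,\psi) \cdot B^{-1}$. Setting $A = B^{-1}$ and chaining the three $\A^1$-homotopies gives the required equivalence in $\hFrn_{2n}(X \xleftarrow{p} Y)$. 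The main obstacle is the explicit transport of the conormal trivialization through the matrix family $M_s$ in stage $2$ and the verification that the stage $2$/$3$ framing discrepancy is captured by a single element of $\mathrm{GL}_{2n+l}(k[Y])$; the affineness of $Y$ is essential both for the lifts $\tilde{i}, \tilde{j}$ and for identifying the framing mismatch as one invertible matrix.
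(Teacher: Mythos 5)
Your argument is correct and follows the same overall strategy as the paper's proof: connect $\Sigma^n(i,\phi)$ to some framing of the suspended embedding $j$ by an $\A^1$-family of closed embeddings of $Y$ into $\A^{2n}_X$ over $X\times\A^1$, extend the conormal trivialization from the $s=0$ end, and absorb the discrepancy with $\Sigma^n(j,\psi)$ at the other end into a single matrix $A\in\GL_{2n+l}(k[Y])$. Where you genuinely diverge is in how the trivialization of the family's conormal sheaf is produced. The paper uses one quadratic path $I(y,t)=\bigl((1-t)i(y)+tj(y),\,t(t-1)i(y),\,t\bigr)$ and then gets the trivialization for free from Lindel's theorem (Bass--Quillen for the smooth affine $k$-algebra $k[Y]$): every vector bundle on $Y\times\A^1$ is extended from $Y$, so $N_I\cong\pi^*(N_I|_{Y\times\{0\}})$ and the suspension of $\phi$ pulls back; no generators of the ideal of the family are ever exhibited. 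You instead decompose the path into three pieces precisely so that each piece has an explicitly describable ideal --- a graph over a constant embedding in stages $1$ and $3$, and the image of a constant embedding under the ambient linear automorphisms $M_s$ in stage $2$, whose action transports the framing. This trades the external input (Lindel) for bookkeeping at the two gluing points, which you handle correctly: the stage~$1$/$2$ framings agree by construction, and the stage~$2$/$3$ mismatch is a single change of basis over the affine $Y$ that is pushed into the final matrix. Two points worth tightening in a full write-up: the global lifts $\tilde i,\tilde j$ to $\A^n_X$ require $X$ affine, which is not assumed --- but only the resulting classes in $J/J^2$ over the affine $Y\times\A^1$ are needed, and the restriction at $s=0$ is insensitive to the choice of lift, so this is harmless; and the polynomial path $M_s$ in $\mathrm{SL}_{2n}(k[s])$ from the identity to $\left(\begin{smallmatrix}0&I_n\\-I_n&0\end{smallmatrix}\right)$ exists because over a field $\mathrm{SL}_{2n}$ is generated by elementary matrices, exactly as you say.
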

\begin{proof}
Consider the map 
\[
I\colon Y\times\A^1\to \A^n_X\times_X \A^n_X\times \A^1 = \A^{2n}_X\times\A^1
\]
given by the formula 
\[
I(y,t)=((1-t)i(y)+t j(y),t(t-1)i(y),t).
\]
This morphism is a morphism over $X\times \A^1$ whence $I$ is finite, moreover, it is clear that $I$ is a monomorphism. Thus $I$ is a closed embedding.
Denote by $N_I$ the normal bundle for the embedding. Lindel theorem~\cite{Lindel} yields that 
\[
N_I \cong \pi^*(N_I|_{Y\times \{0\}})
\]
with $\pi\colon Y\times\A^1\to Y$ being the projection. Since $I|_{Y\times \{0\}}$ is the suspension of inclusion $i$, the pullback of the suspension of $\phi$ gives a trivialization of $N_I$ which we denote $\Phi$. We obtained
\[
(I,\Phi) \in \Frn_{2n}(X\times \A^1 \xleftarrow{p\times\id_{\A^1}} Y\times \A^1)
\]
with $(I,\Phi)|_{X\times\{0\}}=\Sigma^n(i,\phi)$ and $(I,\Phi)|_{X\times\{1\}}=(j\times \{0\},\psi')$ for some trivialization $\psi'$ of the normal bundle for the embedding $j\times\{0\} \colon Y\to \A^n_X\times \A^n$. The claim follows, since every two trivializations differ by some matrix $A\in\mathrm{GL}_{2n+l}(k[Y])$.
\end{proof}

\section{Pushforward maps: finite morphisms}\label{pfmocohm}

\begin{dfn}		
For $X\in\Sm_k$, a closed subset $S\subset X$ and a sheaf of abelian groups $M$ the Cousin complex $C^\bullet_S(X,M)$ is the complex 
\[
\bigoplus_{x\in X^{(0)}\cap S}H^0_x(X,M) \to \bigoplus_{x\in X^{(1)}\cap S}H^1_x(X,M) \to \hdots \to \bigoplus_{x\in X^{(d)}\cap S}H^d_x(X,M)
\]
with the terms given by
\[
C^n_S(X,M)=\bigoplus_{x\in X^{(n)}\cap S}H^n_x(X,M)
\]
and the differential induced by the colimit of the connecting homomorphisms 
\[
H^n_{Z-Z'}(X-Z',M)\to H^{n+1}_{Z'-Z''}(X-Z'',M)
\]
for closed subsets $Z''\subset Z'\subset Z$~\cite[\S 1.2]{CTHK}. For a homotopy module $M_*$ \cite[Theorem 5.1.10]{CTHK} yields that there are canonical isomorphisms
\[
H^n_S(X,M_*)\cong H^n(C^\bullet_S(X,M)).
\]
\end{dfn}

\begin{dfn}\label{dfn:notation}
	For this section fix the following notation: $X,Y\in \Sm_k$, $p\colon Y\to X$ is a finite morphism, $l=\dim X-\dim Y$, $M_*$ is a homotopy module.
\end{dfn}

\begin{dfn}\label{dfn:pushcohfin}
In the notation of Definition~\ref{dfn:notation} let $Z\subset X$ be a closed subset. Then for $a\in\Fr_n(X\xleftarrow{p} Y)$ in the notation of Definition~\ref{frcohaction} we have $a^{-1}(p^{-1}(Z))=Z$ and get a homomorphism
\[
a^*\colon H^{n-l}_{p^{-1}(Z)}(Y,M_{*-l})\to H^n_{Z}(X,M_*).
\]
\end{dfn}


\begin{lem}\label{lem:complexmorphism} In the notation of Definition~\ref{dfn:notation} let $S\subset X$ be a closed subset. Then for every $a\in\Fr_n(X\xleftarrow{p} Y)$ the homomorphisms from Definition~\ref{dfn:pushcohfin} give rise to a morphism of complexes
\[
a^*\colon C^\bullet_{p^{-1}(S)}(Y,M_{*-l})[-l]\to C^\bullet_{S}(X,M_*).
\]
This morphism depends only on the class of $a$ in $ \overline{\Fr}_n(X\xleftarrow{p} Y)$.
\end{lem}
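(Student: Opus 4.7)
The plan is to build the chain map degreewise from Definition~\ref{dfn:pushcohfin}, verify compatibility with the Cousin differentials by naturality, and then deduce $\A^1$-invariance from strict homotopy invariance of $M_*$.

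First I would fix a cohomology degree $k$ and unpack Definition~\ref{dfn:pushcohfin} with $Z = \overline{\{x\}}$ for $x \in X^{(k)}\cap S$. Because $p$ is finite, every $y \in p^{-1}(x)$ has codimension $k-l$ in $Y$, and Nisnevich-locally near $x$ the preimage $p^{-1}(\overline{\{x\}})$ splits as a disjoint union indexed by these $y$. Passing to the filtered colimit over open neighborhoods of $x$, the map $a^*$ of Definition~\ref{dfn:pushcohfin} decomposes into components
\[
a^*_{y,x}\colon H^{k-l}_y(Y, M_{*-l}) \to H^k_x(X, M_*),
\]
and their direct sum over all $x \in X^{(k)}\cap S$ and $y \in p^{-1}(x)\cap Y^{(k-l)}$ defines the degree-$k$ component of the desired morphism of complexes.

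Next I would check commutation with the Cousin differentials. These arise from the connecting homomorphisms of local-cohomology long exact sequences for triples of closed subsets $Z''\subset Z'\subset Z$. Since the map of Definition~\ref{frcohaction} is pullback along an honest morphism of pointed Nisnevich sheaves $X/(X-a^{-1}(S))\w\PP^{\w n}\to Y/(Y-S)\w T^{n+l}$, restricting $a$ to open subsets of $X$ (with the corresponding opens in $Y$ under $p$) produces compatible sheaf morphisms, and naturality of the connecting homomorphisms in the supports yields commutativity with the Cousin differential.

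Finally, for the homotopy invariance, given an $\A^1$-homotopy $\mathcal{H}\in\Fr_n(X\times\A^1\xleftarrow{p\times\id}Y\times\A^1)$ between $a$ and $b$, applying the first two steps to $\mathcal{H}$ produces a chain map between the Cousin complexes on $Y\times\A^1$ and $X\times\A^1$ with supports $p^{-1}(S)\times\A^1$ and $S\times\A^1$. Restriction to the fibers over $0$ and $1$ recovers $a^*$ and $b^*$, and the two restrictions coincide on each summand by strict $\A^1$-homotopy invariance of $M_*$ (the pullbacks $i_0^*, i_1^*$ on local cohomology at any point of $X\times\A^1$ of the form $x\times\{t\}$ agree). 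Hence $a^* = b^*$ as chain maps. The main technical obstacle is the first step: setting up the degreewise decomposition over the points $y \in p^{-1}(x)\cap Y^{(k-l)}$ in a way that is compatible with sheafification and functorial in $a$. This relies on the Nisnevich-local splitting of $p^{-1}(\overline{\{x\}})$ together with the isomorphism $\widetilde{Z}\cong Y$ built into Definition~\ref{frp}, which ensures that each preimage point $y$ contributes a separate framed factor.
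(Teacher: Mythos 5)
Your proposal is correct and follows essentially the same route as the paper: the key points in both are that $a^*$ is induced by an actual morphism of sheaves (hence commutes with the connecting homomorphisms defining the Cousin differential, and one passes to the colimit over triples $Z''\subset Z'\subset Z$), and that strict homotopy invariance of $M_*$ applied to the homotopy $\mathcal{H}$ gives independence of the $\A^1$-homotopy class. The only difference is that your first step's pointwise decomposition into components $a^*_{y,x}$ is handled implicitly in the paper by defining the chain map as the colimit of the maps $H^{n-l}_{p^{-1}(Z-Z')}(Y-p^{-1}(Z'),M_{*-l})\to H^n_{Z-Z'}(X-Z',M_*)$, so no Nisnevich-local splitting of $p^{-1}(\overline{\{x\}})$ is actually needed.
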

\begin{proof}
For closed embeddings $Z''\subset Z'\subset Z \subset X$ the differentials
\begin{gather*}
H^n_{p^{-1}(Z-Z')}(Y-p^{-1}(Z'),M_*)\to H^{n+1}_{p^{-1}(Z'-Z'')}(Y-p^{-1}(Z''),M_*),\\
H^n_{Z-Z'}(X-Z',M_*)\to H^{n+1}_{Z'-Z''}(X-Z'',M_*)
\end{gather*}
are induced by the maps 
\begin{gather*}
\Z[(Y-p^{-1}(Z''))/(Y-p^{-1}(Z'))]\to \Z[(Y-p^{-1}(Z'))/(Y-p^{-1}(Z))][1], \\
\Z[(X-Z'')/(X-Z')]\to \Z[(X-Z')/(X-Z)][1]
\end{gather*}
respectively. Since $a^*$ is induced by a morphism of sheaves (see Definition~\ref{dfn:pushcohfin}) then it fits into the following commutative diagram.
\[
\xymatrix{
H^{n-l}_{p^{-1}(Z-Z')}(Y-p^{-1}(Z'),M_{*-l})\ar[d]^{a^*}\ar[r] & H^{n+1-l}_{p^{-1}(Z'-Z'')}(Y-p^{-1}(Z''),M_{*-l})\ar[d]^{a^*}\\
H^n_{Z-Z'}(X-Z',M_*)\ar[r] & H^{n+1}_{Z'-Z''}(X-Z'',M_*).
}
\]
The colimit with respect to $Z''\subset Z'\subset Z \subset X$ gives the desired morphism of complexes. Since $M_*$ is strictly homotopy invariant, $a^*$ depends only on the class in $ \overline{\Fr}_n(X\xleftarrow{p} Y)$.
\end{proof}

\begin{rem} In the notations of the above lemma the assignment 
\[
U\mapsto C^n_{p^{-1}(S)}(p^{-1}(U),M_*)
\]
defines a sheaf on the small Zariski site of $X$ and $a^*$ gives rise to a morphism of complexes
\[
a^*\colon C^\bullet(p^{-1}(-),M_{*-l})[-l]\to C^\bullet_S(-,M_*).
\]
\end{rem}

\begin{dfn} 
Let $M_*$ be a homotopy module. Then Definition~\ref{KMWmodule} endows $M_*$ with a $\underline{\mathrm{GW}}$-module structure. For $Y\in\Sm_k$ and a line bundle (invertible sheaf) $\mathcal{L}$ on $Y$ define the twist $M_*\otimes \mathcal{L}$ as the sheaf on $Y$ given by the tensor product
\[
M_*\otimes L=M_*\otimes_{\Z[\Os^{\times}_Y]}\Z[\mathcal{L}^{\times}].
\]
Here
\begin{itemize}
	\item 
	$\Os^{\times}_Y$ is the sheaf of invertible regular functions,
	\item
	$\mathcal{L}^{\times}$ is the sheaf of nowhere vanishing sections of $\mathcal{L}$,
	\item
	the module structure $M_*\times \Z[\Os^{\times}_Y] \to M_*$ is induced by the $\underline{\mathrm{GW}}$-module structure combined with the morphism $\Os^{\times}_Y \to \underline{\mathrm{GW}}$, $u\mapsto \langle u\rangle$.
\end{itemize}
We are mostly interested in the particular case of
\[
	\mathcal{L} =\omega_{Y/X}=\omega_{Y/k}\otimes p^*\omega_{X/k}^{-1}
\]
for $X,Y\in\Sm_k$ and a morphism $p\colon Y\to X$. 
\end{dfn}

%
%

\begin{dfn}
In the notation of Definition~\ref{dfn:notation} for a closed subset  $S\subset X$, a line bundle $\mathcal{L}$ on $X$, a trivialization $\theta\colon \mathcal{L}\xrightarrow{\simeq} \Os_X$ and $a\in \Fr_n(X\xleftarrow{p} Y)$ put
\[
p^{a,\theta}_*= (\otimes \theta^{-1}) \circ a^* \circ(\otimes \det a) \circ (\otimes p^*\theta)
\]
for the composition
\begin{multline*}
C^\bullet_{p^{-1}(S)}(Y,M_{*-l}\otimes\omega_{Y/X}\otimes p^*\mathcal{L})[-l]\xrightarrow{\otimes p^*\theta}
 C^\bullet_{p^{-1}(S)}(Y,M_{*-l}\otimes\omega_{Y/X})[-l]\xrightarrow{\otimes \det a} \\
 \xrightarrow{\otimes \det a} C^\bullet_{p^{-1}(S)}(Y,M_{*-l})[-l]\xrightarrow{a^*} C^\bullet_S(Y,M_*)\xrightarrow{\otimes \theta^{-1}}
 C^\bullet_S(Y,M_*\otimes \mathcal{L})
\end{multline*}
Here $\det a\colon \omega_{Y/X}\xrightarrow{\simeq} \Os_{Y}$ is given by Definition~\ref{dfn:det}.
\end{dfn}

\begin{lem}\label{lem:finpushchoice}
	 In the notation of Definition~\ref{dfn:notation} let $\mathcal{L}$ be a trivial line bundle on $X$. Suppose that $Y$ is affine an that there exists an \'etale map $Y\to \A^d_k$. Then for $a,b\in\Fr_n(X\xleftarrow{p} Y)$ and $\theta,\theta'\colon \mathcal{L} \xrightarrow{\simeq} \Os_X$ we have
\[
p^{a,\theta}_*=p_*^{b,\theta'}\colon C^\bullet_{p^{-1}(S)}(Y,M_{*-l}\otimes\omega_{Y/X})[-l]\to C^*_S(X,M_*).
\]
\end{lem}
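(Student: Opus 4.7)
The plan is to reduce the statement to two independent claims — independence of $\theta$ for fixed $a$, and independence of $a$ for fixed $\theta$ — which I would treat by different mechanisms.

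For independence of $\theta$, suppose $\theta'=u\theta$ with $u\in\Os_X^\times(X)$. Then the twist maps transform as $\otimes p^*\theta'=\langle p^*u\rangle\cdot(\otimes p^*\theta)$ and $\otimes(\theta')^{-1}=\langle u\rangle^{-1}\cdot(\otimes\theta^{-1})$ via the $\underline{\GW}$-module structure on $M_*$. The key input I would use is that $a^*$ is compatible with this module structure along $p$, namely $a^*(\langle p^*u\rangle\cdot\beta)=\langle u\rangle\cdot a^*(\beta)$; this projection-type identity follows from Lemma~\ref{frcomextprod} applied to $a$ and the correspondence representing $\langle u\rangle$ (cf.\ Lemma~\ref{lem:GWviacor}). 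The two introduced factors $\langle p^*u\rangle$ and $\langle u\rangle^{-1}$ then combine to $1$.

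For independence of $a$, fix $\theta$ and consider $a,b\in\Fr_n(X\xleftarrow{p}Y)$. By Lemma~\ref{lem:movemb} there exists a matrix $A\in\mathrm{GL}_{2n+l}(k[Y])$ with $\overline{\Sigma^n\Theta_p(a)}=\overline{\Sigma^n\Theta_p(b)\cdot A}$ in $\hFrn_{2n}(X\xleftarrow{p}Y)$. I would apply Lemma~\ref{lem:frameeqsur} to pick a lift $b^{(A)}\in\Fr_{2n}(X\xleftarrow{p}Y)$ with $\Theta_p(b^{(A)})=\Sigma^n\Theta_p(b)\cdot A$, and Corollary~\ref{lem:frameeq} to upgrade the equality to $\overline{\Sigma^n a}=\overline{b^{(A)}}$ in $\overline{\Fr}_{2n}(X\xleftarrow{p}Y)$. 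Lemma~\ref{lem:complexmorphism} then yields $(\Sigma^n a)^*=(b^{(A)})^*$ as morphisms of Cousin complexes. By stability of $M_*$ (Lemma~\ref{frhmfunctor}) one has $(\Sigma^n a)^*=a^*$, and $\det(\Sigma^n a)=\det a$ because the extra framings added by $\Sigma^n$ are trivial, so $p^{\Sigma^n a,\theta}_*=p^{a,\theta}_*$ and it suffices to compare $p^{b^{(A)},\theta}_*$ with $p^{b,\theta}_*$.

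To complete the comparison, the change of framing from $\Sigma^n b$ to $b^{(A)}$ scales the trivialization $\det$ of $\omega_{Y/X}$ by $\det A\in\Os_Y^\times$, whence $\otimes\det b^{(A)}=\langle\det A\rangle\cdot(\otimes\det b)$. In parallel, a relative version of Lemma~\ref{matrixdet} should give $(b^{(A)})^*(\beta)=b^*(\langle\det A\rangle\cdot\beta)$. Substituting these into the defining formula for $p_*$, the two copies of $\langle\det A\rangle$ both land on the $Y$-side of $b^*$ and combine to $\langle(\det A)^2\rangle=1\in\underline{\GW}(Y)$, so the $A$-twist is absorbed and $p^{b^{(A)},\theta}_*=p^{b,\theta}_*$. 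Combined with the previous step this gives $p^{a,\theta}_*=p^{b,\theta}_*$. The main obstacle will be the relative matrix-twist identity $(b^{(A)})^*(\beta)=b^*(\langle\det A\rangle\cdot\beta)$: Lemma~\ref{matrixdet} is stated only for self-correspondences $c_A\in\Fr_n(Y,Y)$, and transferring its proof to the relative setting of $\Fr_n(X\xleftarrow{p}Y)$ requires careful bookkeeping of the module action along $p$. I would handle this by the local decomposition $A=((\det A)\oplus I_{2n+l-1})\cdot B$ over each local ring $\Os_{Y,y}$ with $B$ elementary (hence $\A^1$-homotopic to the identity), reducing the problem to the rank-one case where the twist is manifestly an external product with the unit correspondence for $\det A$ and Lemma~\ref{frcomextprod} allows this external product to be commuted past $a^*$.
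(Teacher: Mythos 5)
Your proposal is correct and follows essentially the same route as the paper: independence of $\theta$ via the $\underline{\GW}$-module compatibility of $a^*$ from Lemma~\ref{frcomextprod}, and independence of $a$ by combining Lemma~\ref{lem:movemb}, Corollary~\ref{lem:frameeq}, homotopy invariance of $a^*$ (Lemma~\ref{lem:complexmorphism}) and the matrix-twist computation of Lemma~\ref{matrixdet}, with the two resulting factors of $\langle\det A\rangle$ cancelling since $\langle(\det A)^2\rangle=1$. The paper's proof is a two-line citation of exactly these ingredients; your writeup supplies the details, including the correctly identified need to apply Lemma~\ref{matrixdet} in the relative setting, which is handled as you describe.
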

\begin{proof}
	The equality $p^{a,\theta}_*=p^{a,\theta'}_*$ follows from Lemma~\ref{frcomextprod} claiming $a^*$ to be a homomorphism of $\underline{\mathrm{GW}}$-modules. The equality $p^{a,\theta'}_*=p^{b,\theta'}_*$ follows from the fact that $a^*$ depends only on the $\A^1$-homotopy class of $a$ and Corollary~\ref{lem:frameeq} combined with Lemmas~\ref{matrixdet} and~\ref{lem:movemb}.
\end{proof}


\begin{lem}\label{semilocetale} For $X\in \Sm_k$ and a finite collection of points $x_1,\ldots, x_n\in X$ there exists a Zariski open $U\subset X, x_i\in U, i=1,\hdots n,$ and an \'etale map $f\colon U\to\A^d_k$ with $d=\dim X$.
\end{lem}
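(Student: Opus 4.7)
The plan is: first reduce to the case where $X$ is affine by choosing an affine open neighborhood of $\{x_1,\ldots,x_n\}$ (such a neighborhood exists in a quasi-projective scheme over an infinite field: pick a hypersurface section of a projective closure $\overline X$ that vanishes on $\overline X \setminus X$ but not at any $x_i$; for $r \gg 0$ the sections of $\mathcal{I}_{\overline X \setminus X}(r)$ are globally generated, the condition ``vanishes at $x_i$'' cuts out a proper $k$-subspace for each $i$, and a finite union of proper subspaces cannot cover the ambient space over an infinite $k$). Then construct the \'etale map by a generic $k$-linear projection from an ambient affine space.

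After this reduction I pick $k$-algebra generators $t_1,\ldots,t_N$ of $\Os_X(X)$, giving a closed embedding $\iota\colon X \hookrightarrow \A^N_k$. Each matrix $A \in \mathrm{Mat}_{d\times N}(k)$ defines a $k$-linear map $L_A\colon \A^N_k \to \A^d_k$ and hence a morphism $\pi_A = L_A \circ \iota\colon X \to \A^d_k$; the goal is to choose $A$ so that $\pi_A$ is \'etale at every $x_i$, and then take $U$ to be the \'etale locus of $\pi_A$, which is automatically open in $X$.

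Since $X$ is smooth of dimension $d$ and $k$ is perfect, $\pi_A$ is \'etale at $x_i$ if and only if $d\pi_A$ induces a $k(x_i)$-linear isomorphism on tangent spaces. Via $\iota$ the tangent space $T_{x_i}X$ is a $d$-dimensional $k(x_i)$-subspace of $k(x_i)^N$, and the condition becomes that $A$ restricts to an isomorphism $T_{x_i}X \to k(x_i)^d$. This is the non-vanishing of a suitable $d \times d$-minor computed in a $k(x_i)$-basis of $T_{x_i}X$, a nonzero polynomial in the entries of $A$ with coefficients in $k(x_i)$. Expanding those coefficients in a $k$-basis of $k(x_i)$ yields a finite family of polynomials in $k[a_{ij}]$, not all zero, whose common vanishing defines the failure locus $Z_i$; hence each $Z_i$ is a proper Zariski-closed subset of $\A^{dN}_k$, and so is $Z = Z_1 \cup \cdots \cup Z_n$.

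The decisive step, and the only place where the hypothesis that $k$ be infinite is essential, is finding an $A \in \A^{dN}(k)$ outside $Z$: a nonzero polynomial in $k[X_1,\ldots,X_{dN}]$ cannot vanish on all of $k^{dN}$ when $k$ is infinite, and the same holds after taking a finite union of proper closed subsets. Any such $A$ yields $\pi_A\colon X \to \A^d_k$ \'etale at every $x_i$, and its \'etale locus provides the desired open $U$.
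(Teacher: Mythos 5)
Your proof is correct and follows essentially the same route as the paper, whose entire argument is the one-line observation that a general linear projection of an affine open neighbourhood $U\ni x_1,\ldots,x_n$ onto $\A^d_k$ is \'etale at the $x_i$; you have simply supplied the standard details (reduction to the affine case and the generic-minor/infinite-field count) that the paper leaves implicit.
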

\begin{proof}
	A general projection $U\to \A^d_k$ of an affine Zariski open $U\subset X, x_i\in U, i=1,\hdots n,$ is \'etale at $x_1,\ldots, x_n$ whence the claim.
\end{proof}

\begin{dfn}\label{dfn:finpush}
	In the notation of Definition~\ref{dfn:notation} let $S\subset X$ be a closed subset and $\mathcal{L}$ be a line bundle on $X$. Define the pushforward map 
\[
p_*\colon C^\bullet_{p^{-1}(S)}(Y,M_{*-l}\otimes\omega_{Y/X}\otimes p^*\mathcal{L})[-l]\to C^\bullet_S(X,M_*\otimes \mathcal{L})
\]
as follows. By Lemma~\ref{semilocetale} we can find an open cover $\bigcup_{i=1}^n U_i=X$ such that for every $i$ the following holds:
\begin{enumerate}
\item
$U_i$ is affine,
\item
$\mathcal{L}|_{U_i}$ is trivial,
\item
$V_i=p^{-1}(U_i)$ admits an \'etale map $V_i\to\A^d_k$,
\item
there exists a closed embedding $V_i\to\A^n_{U_i}$ over $U_i$ with a trivial normal bundle.
\end{enumerate}
These conditions imply that we can choose some $a_i\in\Fr_n(U_i\xleftarrow{p_i} V_i)$ together with trivializations $\theta_i\colon \mathcal{L}|_{U_i}\xrightarrow{\simeq} \Os_{U_i}$ with $p_i=p|_{V_i}$. Lemma~\ref{lem:finpushchoice} yields that the maps
\[
p_{i*}=(p_i^{a_i,\theta_i})_*\colon C^\bullet(V_i,M_{*-l}\otimes\omega_{V_i/U_i}\otimes p^*(\mathcal{L}))[-l]\to 
C^\bullet(U_i,M_*\otimes \mathcal{L})
\]
do not depend on the choice of $a_i$ and $\theta_i$ and, in particular, coincide on the intersections $V_{ij}=V_i\cap V_j$. Then $p_{i*}$-s give rise to the pushforward map $p_*$ making the following diagram commute.
\[
\xymatrix{
C^\bullet_{p^{-1}(S)}(Y,M_{*-l}\otimes\omega_{Y/X}\otimes p^*\mathcal{L})[-l]\ar@{^{(}->}[r]\ar[d]^{p_*} & \bigoplus\limits_{i=1}^n C^\bullet_{p^{-1}(S)}(V_i,M_{*-l}\otimes\omega_{V_i/U_i}\otimes p_i^*\mathcal{L})[-l] \ar[d]^{\oplus p_{i*}}\\
C^\bullet_S(X,M_*\otimes \mathcal{L})\ar@{^{(}->}[r] & \bigoplus\limits_{i=1}^n C^*_S(U_i,M_*\otimes \mathcal{L})
}
\]
It also follows from Lemma~\ref{lem:finpushchoice} that $p_*$ does not depend on the cover.
\end{dfn}

\begin{lem}\label{pffinite} 
	In the notation of Definition~\ref{dfn:notation} let $V\in \Sm_k$ and $q\colon V\to Y$ be a finite morphism. Put $l'=\dim X-\dim Y$. Then for a line bundle $\mathcal{L}$ over $X$ and a closed subset $S\subset X$  one has
\[
(p\circ q)_*=p_*\circ q_*\colon 
C^\bullet_{(p\circ q)^{-1}(S)}(V,M_{*-l-l'}\otimes\omega_{V/X}\otimes (p\circ q)^*\mathcal{L})[-l-l']\to C^\bullet_S(X,M_*\otimes \mathcal{L}).
\]
\end{lem}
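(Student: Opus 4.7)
The plan is to reduce the identity to a local computation on a suitable Zariski cover of $X$ and then invoke the multiplicativity of framed enhancements from Lemma~\ref{lem:enhancefunct} together with the uniqueness statement of Lemma~\ref{lem:finpushchoice}. First, I would choose a Zariski cover $\{U_i\}$ of $X$ satisfying the properties enumerated in Definition~\ref{dfn:finpush} simultaneously for $p$, $q$, and $p\circ q$. Concretely, after possibly shrinking using Lemma~\ref{semilocetale}, I arrange that $U_i$ is affine with $\mathcal{L}|_{U_i}$ trivial, that $V_i := p^{-1}(U_i)$ and $W_i := (p\circ q)^{-1}(U_i) = q^{-1}(V_i)$ are affine and admit \'etale maps to affine spaces, and that closed embeddings with trivial (co)normal bundles over the base exist for $V_i\to U_i$, $W_i\to V_i$, and $W_i\to U_i$. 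On such an open, pick framed enhancements $a\in\Fr_n(U_i\xleftarrow{p_i} V_i)$ of $p_i=p|_{V_i}$ and $b\in\Fr_m(V_i\xleftarrow{q_i} W_i)$ of $q_i=q|_{W_i}$, together with a trivialization $\theta\colon \mathcal{L}|_{U_i}\xrightarrow{\simeq}\Os_{U_i}$.

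Next, Lemma~\ref{lem:enhancefunct} produces a framed enhancement $c := (b\boxtimes\id_{T^l})\circ a \in \Fr_{n+m}(U_i\xleftarrow{p_i\circ q_i} W_i)$ of the composite morphism. The two key points to verify are: (a) the induced maps on Cousin complexes satisfy $c^* = a^*\circ b^*$ after the appropriate shift, because by Definition~\ref{def:frn} the composite framed correspondence realizes the composition of the underlying pointed sheaf morphisms, and the cohomology actions of Definition~\ref{frcohaction} are constructed directly from these sheaf morphisms; and (b) the trivialization $\det c$ of $\omega_{W_i/U_i}$ determined by $c$ coincides with the composite $(\det b)\otimes q_i^*(\det a)$ under the canonical isomorphism $\omega_{W_i/U_i}\cong \omega_{W_i/V_i}\otimes q_i^*\omega_{V_i/U_i}$, which one reads off from the block-form of the conormal sheaf of the composite embedding $W_i\hookrightarrow \A^m_{V_i}\hookrightarrow \A^{m+n}_{U_i}$. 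Combining (a) and (b) with the evident compatibility between $\theta$ and $p_i^*\theta$ yields
\[
(p_i\circ q_i)_*^{c,\theta} = p_{i*}^{a,\theta}\circ q_{i*}^{b,p_i^*\theta}
\]
as morphisms of Cousin complexes over $U_i$.

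Finally, Lemma~\ref{lem:finpushchoice} identifies the left-hand side with the restriction of $(p\circ q)_*$ to $U_i$ and the right-hand side with the restriction of $p_*\circ q_*$, independent of the choices of $a$, $b$, $\theta$. Since both $(p\circ q)_*$ and $p_*\circ q_*$ are assembled from the cover $\{U_i\}$ in the same way via Definition~\ref{dfn:finpush}, the local equalities patch together to the desired global identity.

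I expect point (b) to be the main technical obstacle. Unwinding Definition~\ref{def:frextprod} for $b\boxtimes\id_{T^l}$ and the subsequent composition with $a$ involves coordinate swaps in the $\PP^{\wedge}$ and $T$ factors, so one needs to track the ordering of coordinates in the block decomposition of the conormal sheaf carefully to see that the resulting trivialization of $\omega_{W_i/U_i}$ is indeed $(\det b)\otimes q_i^*(\det a)$ with no stray unit in $\underline{\GW}$. Any such unit would have to be absorbed via the $\KMW_*$-action using Lemma~\ref{frcomextprod} and Lemma~\ref{matrixdet}; once the bookkeeping of swaps is settled, the rest of the argument is formal and follows from the sheaf-level functoriality of framed correspondences.
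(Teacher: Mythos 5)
Your argument is correct and is essentially the paper's own proof, which consists of the single line ``Follows from Lemma~\ref{lem:enhancefunct} and the construction'': you have simply unwound that remark into its constituent steps (a common refining cover, composition of framed enhancements, functoriality of the induced sheaf maps, multiplicativity of $\det$, and independence of choices via Lemma~\ref{lem:finpushchoice}). The only cosmetic slip is the label $q_{i*}^{b,p_i^*\theta}$ — the relevant line bundle on $V_i$ is $\omega_{V_i/U_i}\otimes p_i^*\mathcal{L}$, so the trivialization should be $\det a\otimes p_i^*\theta$ — but Lemma~\ref{lem:finpushchoice} makes the map independent of this choice anyway.
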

\begin{proof}
Follows from Lemma~\ref{lem:enhancefunct} and the construction.
\end{proof}

\begin{lem}\label{lem:trsupiso}
	In the notation of Definition~\ref{dfn:notation} for a closed subset $S\subset Y$ the pushforward map 
\[
i_*\colon C^\bullet_S(Y,M_{*-l}\otimes\omega_{Y/X})[-l]\to C^\bullet_S(X,M_*)
\]
is an isomorphism.
\end{lem}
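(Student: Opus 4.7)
The notation $i_*$ in the statement signals the case $p=i\colon Y\hookrightarrow X$ a closed embedding of codimension $l$, which is the only situation in which this map has a chance to be an isomorphism. The plan is to pass to the termwise decomposition provided by the Cousin complexes and identify $i_*$ on each summand with the motivic purity isomorphism. Smoothness of $Y$ and $X$ yields a bijection $y\leftrightarrow i(y)$ between $Y^{(n-l)}\cap S$ and $X^{(n)}\cap S$; accordingly the degree-$n$ piece of $i_*$ decomposes as a direct sum of local maps
\[
\alpha_y\colon H^{n-l}_y\bigl(Y,M_{*-l}\otimes\omega_{Y/X}\bigr)\longrightarrow H^{n}_{i(y)}\bigl(X,M_*\bigr),
\]
so it suffices to show that each $\alpha_y$ is an isomorphism.

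Fix such a point $y$ and replace $X$ by a Zariski open neighbourhood $V$ of $i(y)$ chosen small enough that, using Lemma~\ref{semilocetale} and smoothness of the pair, $V$ is affine, $U=i^{-1}(V)$ admits an \'etale map to $\A^d_k$, and the ideal $I\subset\Os_V$ cutting out $U$ is generated by $l$ regular functions $\phi=(\phi_1,\dots,\phi_l)$ whose classes give an $\Os_U$-basis of $I/I^2$. Then $(i|_U,\phi)\in\Frn_0(V\xleftarrow{i}U)$, and Lemma~\ref{lem:frameeqsur} lifts it to a framed enhancement $a\in\Fr_0(V\xleftarrow{i}U)$, while $\phi$ dually trivializes $\omega_{U/V}$. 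With these choices Definition~\ref{dfn:finpush} makes $\alpha_y$ equal to the map $a^*$ of Definition~\ref{dfn:pushcohfin}, which is induced by the morphism of pointed sheaves $V_+\to U_+\wedge T^l$ that collapses $V\setminus U$ to the basepoint and uses $\phi$ to identify the normal bundle with the trivial one.

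This morphism is precisely the motivic purity equivalence $V/(V\setminus U)\xrightarrow{\simeq}\operatorname{Th}(N_{U/V})\cong U_+\wedge T^l$ for the smooth closed pair $(V,U)$. Applying $\operatorname{Ext}^{*}(-,M_{*-l})$ with the identifications of Definition~\ref{dfn:cohext} and invoking strict $\A^1$-invariance of the homotopy module $M_*$ turns this equivalence into the desired isomorphism $\alpha_y$, and the isomorphism on Cousin complexes then glues from Lemma~\ref{lem:complexmorphism}. The hard part is precisely the identification of the combinatorially defined $a^*$ with the purity collapse: Lemma~\ref{lem:finpushchoice} already ensures that $\alpha_y$ does not depend on the cover, the framing lift, or the trivialization, so it suffices to carry out the comparison for one explicit framing, after which the isomorphism follows formally from purity and the Cousin-complex computation of cohomology with support.
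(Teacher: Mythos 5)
Your argument is correct and follows essentially the same route as the paper: shrink $X$, lift a conormal trivialization to a framed enhancement $a\in\Fr_0(X\xleftarrow{i}Y)$ via Lemma~\ref{lem:frameeqsur}, and observe that the induced collapse map $X/(X-Y)\to Y_+\wedge T^l$ is an equivalence, hence $a^*$ is an isomorphism on cohomology with supports. The only difference is in how that equivalence is justified: the paper notes that after shrinking the collapse map is already an isomorphism of Nisnevich sheaves (elementary excision, so neither the purity theorem nor strict $\A^1$-invariance is needed) and then takes a colimit over closed supports $Z\subset Y$, whereas you decompose the Cousin complex termwise and appeal to motivic purity plus strict homotopy invariance — which works, but the identification of the framing-induced collapse with the purity equivalence is asserted rather than checked, and the sheaf-level excision argument sidesteps it.
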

\begin{proof}
Since $i_*$ is a morphism of complexes of Zariski sheaves we may shrink $X$. Thus we may assume that there exists $a\in\Fr_0(X\xleftarrow{i} Y)$ that induces an isomorphism of sheaves $X/(X-Y)\xrightarrow{\simeq} Y_+\w T^l$. Then for every closed $Z\subset Y$ the map $a^*\colon H^{i-l}_Z(Y,M_{*-l})\to H^{i}_Z(X,M_*)$ is an isomorphism. Taking the colimit along $Z$ we obtain the claim.
\end{proof}

\begin{cor}\label{cor:ix}
For $X\in \Sm_k$, $x\in X^{(n)}$, a line bundle $\mathcal{L}$ on $X$ and a homotopy module $M_*$ the pushforward map
\[
(i_x)_*\colon H^0(x,M_{*-n}\otimes\omega_{x/X}\otimes i_x^*\mathcal{L}) \xrightarrow{\simeq}
H^{n}_{x}(X,M_{*}\otimes \mathcal{L})
\]
for the embedding $i_x\colon x\to X$ is an isomorphism.
\end{cor}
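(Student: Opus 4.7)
The plan is to realize $(i_x)_*$ as the degree-$n$ component of an isomorphism of Cousin complexes between \emph{smooth} varieties, and then to invoke Lemma~\ref{lem:trsupiso}.

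First I would set up a smooth local model for $x$. Let $Z = \overline{\{x\}} \subset X$ be the reduced irreducible closure. Since $k$ is perfect, the singular locus of $Z$ is a proper closed subset, so there is a dense open $V \subset Z$ which is smooth over $k$ and contains the generic point $x$. Setting $U = X \setminus (Z \setminus V)$ gives an open neighbourhood of $x$ in $X$ such that $i\colon V \hookrightarrow U$ is a closed embedding of smooth varieties with $\dim U - \dim V = n$. Local cohomology at $x$ only sees an open neighbourhood of $x$, so $H^n_x(X, M_* \otimes \mathcal{L}) = H^n_x(U, M_* \otimes \mathcal{L})$, and the stalk of $\omega_{V/U}$ at the generic point $x$ of $V$ is canonically identified with $\omega_{x/X}$ (and similarly $i^*\mathcal{L}$ restricts to $i_x^*\mathcal{L}$ at $x$).

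Next I would apply Lemma~\ref{lem:trsupiso} to $i\colon V \hookrightarrow U$ with closed subset $S = V$, twisted by $\mathcal{L}$. The line bundle twist is obtained by Zariski-localizing on $U$ to trivialize $\mathcal{L}$ and invoking the $\underline{\GW}$-linearity of Lemma~\ref{frcomextprod}, together with the independence of the pushforward on the framed enhancement given by Lemma~\ref{lem:finpushchoice}. This produces an isomorphism of complexes
\[
i_*\colon C^\bullet_V(V, M_{*-n} \otimes \omega_{V/U} \otimes i^*\mathcal{L})[-n] \xrightarrow{\simeq} C^\bullet_V(U, M_* \otimes \mathcal{L}).
\]
Inspecting the term in degree $n$: on the source side only the unique generic point $x$ of the irreducible scheme $V$ contributes, giving $H^0(x, M_{*-n} \otimes \omega_{x/X} \otimes i_x^* \mathcal{L})$; on the target side the codimension-$n$ points of $U$ contained in $V$ are precisely the generic points of $V$, so the corresponding term equals $H^n_x(U, M_* \otimes \mathcal{L}) = H^n_x(X, M_* \otimes \mathcal{L})$. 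Taking the $n$-th component of $i_*$ yields the desired isomorphism.

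The only non-routine point is to verify that the map obtained in this way is actually $(i_x)_*$ in the sense of the statement. Since $x$ itself is not in $\Sm_k$, the meaning of $(i_x)_*$ must be through a smooth open neighbourhood of $x$ inside $\overline{\{x\}}$; the fact that this does not depend on the choice of $V$ (or of the Zariski trivialization of $\mathcal{L}$) is exactly the content of Lemma~\ref{lem:finpushchoice} applied to two overlapping smooth opens of $\overline{\{x\}}$, so that the colimit over shrinking $V$ reproduces the degree-$n$ component of $i_*$.
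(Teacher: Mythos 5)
Your proposal is correct and follows essentially the same route as the paper: the paper likewise picks a Zariski open $U\ni x$ in which $\overline{\{x\}}\cap U$ is smooth, applies Lemma~\ref{lem:trsupiso} to that closed embedding, and reads off the degree-$n$ component. Your extra care with the $\mathcal{L}$-twist and with the independence of the choice of smooth open (via Lemma~\ref{lem:finpushchoice}) only makes explicit what the paper leaves implicit.
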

\begin{proof}
	There is a Zariski open $x\in U\subset X$ such that $S=\overline{x}\cap U$ is smooth. Lemma~\ref{lem:trsupiso} applied to the closed embedding $i\colon S\to U$ yields an isomorphism
	\[
	i_*\colon C^\bullet_S(S,M_{*-n}\otimes\omega_{S/U}\otimes i^*\mathcal{L})[-n]\to C^\bullet_S(U,M_*)
	\]
	which restricts to the desired isomorphism.
\end{proof}

\begin{cor}\label{killtwist} For $X\in\Sm_k$, a closed subset $S\subset X$, a line bundle $\mathcal{L}$ on $X$ and a homotopy module $M_*$ the pushforward map
\[
(i_\mathcal{L})_*\colon H^{n}_S(X,M_{*}\otimes \mathcal{L})\to H^{n+1}_S(L,M_{*+1})
\]
is an isomorphism. Here $L$ is the total space of $\mathcal{L}$ and $i_{\mathcal{L}}\colon X\to L$ is the zero section. Moreover, if $\theta\colon  \mathcal{L}\xrightarrow{\simeq} \Os_X$ is a trivialization of $\mathcal{L}$ then the following diagram commutes.
\[
\xymatrix{
H^{n}_S(X,M_{*}\otimes \mathcal{L}) \ar[r]^{(i_{\mathcal{L}})_*} \ar[d]^{\otimes \theta} & H^{n+1}_S(L,M_{*+1}) \ar[d]^{\otimes \theta} \\
H^{n}_S(X,M_{*}) \ar[r]^{(i_{\A^1})_*} & H^{n+1}_S(\A^1_X,M_{*+1})
}
\]
\end{cor}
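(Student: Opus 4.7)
The plan is to deduce the first assertion directly from Lemma~\ref{lem:trsupiso} applied to the closed embedding $i_\mathcal{L}\colon X\to L$. Here $l=\dim L-\dim X=1$, and the normal bundle to the zero section of a line bundle is canonically the line bundle itself, so the identification $\omega_{Y/X}\cong\det N_i$ of Definition~\ref{dfn:det} yields a canonical isomorphism $\omega_{X/L}\cong\mathcal{L}$. Substituting this into the conclusion of Lemma~\ref{lem:trsupiso}, passing to $(n+1)$-st cohomology and reindexing $*\mapsto *+1$ produces the asserted isomorphism.

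For the commutativity of the diagram I would argue locally on $X$. The trivialization $\theta$ induces an isomorphism of total spaces $\bar\theta\colon L\xrightarrow{\simeq}\A^1_X$ over $X$ compatible with the zero sections, and the right vertical arrow of the diagram is the induced map $(\bar\theta^{-1})^*$ in cohomology. I would then take the tautological framed enhancement $a_0\in\Fr_0(\A^1_X\xleftarrow{i_{\A^1}}X)$ given by the coordinate $t$ on $\A^1$ and set $a=a_0\circ(\bar\theta)_+\in\Fr_0(L\xleftarrow{i_\mathcal{L}}X)$, whose framing function is $\bar\theta^*t$. A direct inspection of Definitions~\ref{rem:frtonorm} and~\ref{dfn:det} shows that $\det a=\theta$ under the identification $\omega_{X/L}\cong\mathcal{L}$, while the factorization of $a$ through $\bar\theta$ yields the naturality identity $a^*=\bar\theta^*\circ a_0^*$ at the level of Cousin complexes.

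Substituting $a$ and $a_0$ into Definition~\ref{dfn:finpush} (with trivial external twisting line bundle) gives $(i_\mathcal{L})_*=a^*\circ(\otimes\det a)=\bar\theta^*\circ a_0^*\circ(\otimes\theta)$ and $(i_{\A^1})_*=a_0^*$. Composing the top row with the right vertical arrow $(\bar\theta^{-1})^*$ and using $(\bar\theta^{-1})^*\circ\bar\theta^*=\id$ yields the required identity $(\bar\theta^{-1})^*\circ(i_\mathcal{L})_*=(i_{\A^1})_*\circ(\otimes\theta)$. The one point requiring care is the simultaneous role of $\theta$ — both as the external left vertical arrow and as the internal twist $\otimes\det a$ inside the pushforward formula — but I do not expect a substantial obstacle, since the entire argument reduces to the naturality of the framed action on cohomology together with the equality $\det a=\theta$ built into the choice of $a$.
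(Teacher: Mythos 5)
Your proof is correct and follows essentially the same route as the paper, which simply cites Lemma~\ref{lem:trsupiso} for the isomorphism and asserts that the commutativity of the square is immediate from the definition of the pushforward; you have filled in exactly the intended details (the identification $\omega_{X/L}\cong\det N_{i_{\mathcal{L}}}\cong\mathcal{L}$, the choice of framed enhancement $a=a_0\circ\bar\theta_+$ with $\det a=\theta$, and the factorization $a^*=\bar\theta^*\circ a_0^*$, which is legitimate by the independence-of-choices Lemma~\ref{lem:finpushchoice}).
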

\begin{proof}
Follows from Lemma~\ref{lem:trsupiso}. The commutativity of the diagram immediately follows from the definition of the pushforward map.
\end{proof}

\section{Pushforward maps: general case}\label{sect:pushgen}
\begin{dfn}\label{pfcompdef}
	Let $f\colon Y\to X$ be a morphism in $\Sm_k$ of relative dimension $d=\dim Y-\dim X$ and let $\mathcal{L}$ be a line bundle on $X$. For $y\in Y^{(n)}$ such that $f\colon y\to f(y)$ is finite define $(f_*)_y$ to be the morphism making the following diagram commute.
	\[
	\xymatrix{
	H^n_y(Y,M_*\otimes\omega_{Y/X}\otimes f^*\mathcal{L}) \ar[r]^{(f_*)_y} &  H^{n-d}_{f(y)}(X,M_{*-d}\otimes \mathcal{L}) \\
	H^0(y,M_{*-n}\otimes\omega_{y/X}\otimes (f\circ i_y)^*\mathcal{L}) \ar[u]^{(i_y)_*}_{\cong} \ar[r]^{(f|_y)_*} &
	H^0(f(y),M_{*-n}\otimes\omega_{f(y)/X}\otimes i_{f(y)}^*\mathcal{L}) \ar[u]_{(i_{f(y)})_*}^{\cong}
}
	\]
	Here the vertical isomorphisms are given by Corollary~\ref{cor:ix} and the bottom morphism is the pushforward morphism from Definition~\ref{dfn:finpush} for the finite morphism $f|_y\colon y\to f(y)$.
\end{dfn}
%

\begin{prop} \label{prop:push}
	In the notation of Definition~\ref{pfcompdef} let $S$ be a closed subset of $Y$ finite over $X$. Then the maps $(f_*)_y$ for $y\in S$ define a morphism of complexes
\[
f_*\colon C^\bullet_S(Y,M_*\otimes\omega_{Y/X}\otimes f^*\mathcal{L})\to C^\bullet_{f(S)}(X,M_{*-d}\otimes \mathcal{L})[-d].
\]
\end{prop}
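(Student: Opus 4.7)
The plan is to reduce the chain-map property to its already-established counterpart for finite morphisms of smooth varieties (Lemma~\ref{lem:complexmorphism} combined with Definition~\ref{dfn:finpush}). First I would unpack what has to be checked: expanding both $d_X \circ f_*$ and $f_* \circ d_Y$ into matrix coefficients indexed by pairs of points, the statement is equivalent to verifying, for each $y \in Y^{(n)} \cap S$ and each $x' \in X^{(n-d+1)} \cap f(S) \cap \overline{\{f(y)\}}$, the local identity
\[
\partial_{f(y), x'} \circ (f_*)_y \;=\; \sum_{y'}\, (f_*)_{y'} \circ \partial_{y, y'},
\]
where $\partial_{y,y'}$ and $\partial_{f(y),x'}$ are the matrix coefficients of the respective Cousin differentials and the sum is taken over $y' \in \overline{\{y\}} \cap f^{-1}(\overline{\{x'\}}) \cap Y^{(n+1)}$, a finite set because $S \to X$ is finite.

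The next step is to localize so that the identity becomes an assertion about a finite morphism of smooth varieties. I would shrink $X$ to an affine Zariski open $U \ni x'$ that meets no other codimension-$(n-d+1)$ point of $f(S) \cap \overline{\{f(y)\}}$ and further so that $X_0 := \overline{\{f(y)\}} \cap U$ is smooth. After shrinking $Y$ accordingly, $Y_0 := \overline{\{y\}} \cap f^{-1}(U)$ becomes smooth as well and still contains every $y'$ appearing in the sum. The restriction $f|_{Y_0}\colon Y_0 \to X_0$ is then a finite morphism of smooth varieties, to which Definition~\ref{dfn:finpush} and Lemma~\ref{lem:complexmorphism} directly apply: $(f|_{Y_0})_*$ is a morphism of Cousin complexes with support on $Y_0 \cap S$.

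The core of the argument is to identify, under the closed-embedding pushforwards $(i_{Y_0})_*$ and $(i_{X_0})_*$ that by Corollary~\ref{cor:ix} are termwise isomorphisms on the relevant local cohomology groups, the maps $(f_*)_y$, $(f_*)_{y'}$ and the Cousin differentials of $Y$ and $X$ with the corresponding matrix coefficients and differentials attached to $(f|_{Y_0})_*$. Functoriality of the finite pushforward (Lemma~\ref{pffinite}) applied to the factorizations $y \hookrightarrow Y_0 \hookrightarrow Y$ and $f(y) \hookrightarrow X_0 \hookrightarrow X$ shows that under these identifications $(f_*)_y$ and $(f_*)_{y'}$ correspond exactly to the matrix coefficients of $(f|_{Y_0})_*$ at the generic and codimension-one points of $X_0$; similarly, $\partial_{y,y'}$ and $\partial_{f(y),x'}$ correspond to the Cousin differentials of $Y_0$ and $X_0$. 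The desired identity is therefore the chain-map property of $(f|_{Y_0})_*$ furnished by Lemma~\ref{lem:complexmorphism}.

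The main obstacle will be the orientation bookkeeping in the identification step: one has to reconcile the twists $\omega_{Y/X}$ appearing in the source of $f_*$ with $\omega_{Y_0/X_0}$ appearing in the finite-pushforward picture for $f|_{Y_0}$, using the additional twists introduced by the isomorphisms of Corollary~\ref{cor:ix} and the transitivity formulas for $\omega$ along the two factorizations $Y_0 \hookrightarrow Y \xrightarrow{f} X$ and $Y_0 \xrightarrow{f|_{Y_0}} X_0 \hookrightarrow X$. Once this compatibility is set up, the matching of matrix coefficients is an immediate consequence of Lemma~\ref{pffinite}, and the proof is completed by invoking Lemma~\ref{lem:complexmorphism}.
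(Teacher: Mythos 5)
Your overall strategy is the paper's: reduce to the matrix coefficients of the Cousin differentials, localize, and invoke the finite-morphism case (Lemma~\ref{lem:complexmorphism}, Definition~\ref{dfn:finpush}) together with functoriality (Lemma~\ref{pffinite}) and the isomorphisms of Corollary~\ref{cor:ix}. However, there is a genuine gap in your localization step. You assert that after shrinking $X$ one can arrange that $X_0=\overline{\{f(y)\}}\cap U$ and $Y_0=\overline{\{y\}}\cap f^{-1}(U)$ are smooth \emph{and} still contain $x'$, respectively all the points $y'$ over $x'$. This is not achievable in general: the points $x'$ and $y'$ are fixed (they are determined by the differential you are trying to compute), and the closures $\overline{\{f(y)\}}$ and $\overline{\{y\}}$ may well be singular precisely at these codimension-one points --- think of $\overline{\{y\}}$ a curve with a node at $y'$. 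No Zariski open neighbourhood retaining $y'$ makes such a closure smooth, so the finite morphism of smooth varieties $f|_{Y_0}\colon Y_0\to X_0$ on which your whole reduction rests need not exist.

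The missing idea is to replace $\overline{\{y\}}$ by its normalization $Z\to\overline{\{y\}}$. Being normal, $Z$ is regular (hence, over the perfect field $k$, smooth) in codimension one, so one can choose opens $U\subset Y$, $V\subset X$ with $\widetilde{Z}=Z\times_Y U$ smooth and still containing $y$ and all points lying over the $y'$. One then pushes forward from $\widetilde{Z}$ into the \emph{ambient} smooth varieties, via the two finite maps $p\colon\widetilde{Z}\to U$ and $f\circ p\colon\widetilde{Z}\to V$, rather than into the (possibly singular) closures $\overline{\{y\}}$ and $\overline{\{f(y)\}}$; this also removes your unnecessary smoothness requirement on $X_0$. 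The commutation of the differentials then follows because $p_*$ and $(f\circ p)_*$ are morphisms of Cousin complexes, the maps $(f_*)_y$, $(f_*)_{y'}$ factor through them by Lemma~\ref{pffinite}, and $(p_*)_y$ is an isomorphism. Note also that a given $y'$ may have several preimages in $\widetilde{Z}$, so the comparison must be made with the sum of the corresponding local terms; your single-point identification glosses over this.
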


\begin{proof} Consider the case of trivial $\mathcal{L}$, the proof in the general case is the same. 
	
Take $x\in X^{(n-d)}\cap f(S), x'\in X^{(n-d+1)}\cap \overline{x}$,
choose a point $y\in f^{-1}(x)\cap S$ and let $y'_1,y'_2,\hdots, y'_m$ be the preimages of $x'$ in $\overline{y}$. We need to check that the diagram
\[
\xymatrix{
H^n_y(Y,M_*\otimes\omega_{Y/X})\ar[r]\ar[d]^{(f_*)_y}& \bigoplus\limits_{i=1}^m H^{n+1}_{y'_i}(Y,M_*\otimes\omega_{Y/X})\ar[d]^{\sum(f_*)_{y'_i}}\\
H^{n-d}_x(X,M_{*-d})\ar[r] & H^{n-d+1}_{x'}(X,M_{*-d})
}
\]
commutes. Let $Z\to \overline{y}$ be the normalization. There exists Zariski open subsets $U\subset Y$ and $V\subset X$ such that
\begin{multicols}{2}
\begin{enumerate}
	\item 
	$y_i'\in U$ for all $i$,
	\item
	$x'\in V$,
	\item
	$\widetilde{Z}=Z\times_Y U$ is smooth,
	\item
	$f(\overline{y}\cap U)=\overline{x}\cap V$.
\end{enumerate}
\end{multicols}
\noindent
Then $p\colon\widetilde{Z}\to U$ and $f\circ p\colon\widetilde{Z}\to V$ are finite maps between smooth varieties and Definition~\ref{dfn:finpush} gives morphisms of complexes 
\begin{gather*}
p_*\colon C^\bullet(\widetilde{Z},M_{*-n}\otimes\omega_{\widetilde{Z}/X})[-n]\to C^\bullet(U,M_*\otimes\omega_{U/X}),\\ 
(f\circ p)_*\colon C^\bullet(\widetilde{Z},M_{*-n}\otimes\omega_{\widetilde{Z}/X})[-n]\to C^\bullet(V,M_{*-d})[-d].
\end{gather*}
Then the squares in the diagram
\[
\xymatrix{
H^n_y(U,M_*\otimes\omega_{U/X})\ar[r]\ar@/_5pc/[dd]_{(f_*)_y}& \bigoplus\limits_{i=1}^m H^{n+1}_{y'_i}(U,M_*\otimes\omega_{U/X})\ar@/^5pc/[dd]^{\sum_i (f_*)_{y'_i}}\\
H^0_y(\widetilde{Z},M_{*-n}\otimes\omega_{\widetilde{Z}/X})\ar[d]^{((f\circ p)_*)_y}\ar[u]_{(p_*)_y}\ar[r] & \bigoplus\limits_{j=1}^l H^1_{y''_j}(\widetilde{Z},M_{*-n}\otimes\omega_{\widetilde{Z}/X})\ar[d]_{\sum_j((f\circ p)_*)_{y''_j}}\ar[u]^{\sum_j(p_*)_{y''_j}}\\
H^{n-d}_x(V,M_{*-d})\ar[r] & H^{n-d+1}_{x'}(V,M_{*-d})
}
\]
commute. Here $y_j''$ denote the points of $\widetilde{Z}$ lying over $y'_i$. 
The triangles commute by Lemma~\ref{pffinite}. The claim follows since
\[
(p_*)_y\colon H^0_y(\widetilde{Z},M_{*-n}\otimes\omega_{\widetilde{Z}/X})\to H^n_y(U,M_*\otimes\omega_{U/X})
\]
is an isomorphism.
\end{proof}

\begin{dfn}\label{dfn:pushc}
In the notation of Definition~\ref{pfcompdef} let $S$ be a closed subset of $Y$ finite over $X$. Then applying Proposition~\ref{prop:push} and taking cohomology we obtain pushforward maps
\[
f_*\colon H^n_S(Y,M_*\otimes\omega_{Y/X}\otimes f^*\mathcal{L}) \to 
H^{n-d}_{f(S)}(X,M_{*-d}\otimes \mathcal{L}).
\]
The constructed pushforward maps are functorial by Lemma~\ref{pffinite}.
\end{dfn}

\begin{lem}\label{lem:opencommute} 
In the notation of Definition~\ref{pfcompdef} let $S$ be a closed subset of $Y$ finite over $X$. Then for an open embedding $j_X\colon X_0\xrightarrow{} X$ the square
\[
\xymatrix{
H^n_S(Y,M_*\otimes\omega_{Y/X}\otimes f^*\mathcal{L})\ar[r]^(0.43){j_Y^*}\ar[d]^{f_*} & H^n_{S_0}(Y_0,M_*\otimes\omega_{Y_0/X_0}\otimes (f\circ j_Y)^*\mathcal{L})\ar[d]^{f_*}\\
H^{n-d}_{f(S)}(X,M_{*-d}\otimes \mathcal{L})\ar[r]^(0.43){j_X^*} & H^{n-d}_{f(S_0)}(X_0,M_{*-d}\otimes j_X^*\mathcal{L})
}
\]
commutes, where $j_Y\colon Y_0=X_0\times_X Y \to Y$ is the open embedding and $S_0=X_0\times_X S$.
\end{lem}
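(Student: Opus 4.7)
The plan is to reduce the commutativity of the square to a point-by-point verification on the Cousin complexes, exploiting the fact that both the isomorphisms of Corollary~\ref{cor:ix} and the finite pushforwards of Definition~\ref{dfn:finpush} are local constructions on $X$.

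First, I would unwind the definitions: by Proposition~\ref{prop:push} and Definition~\ref{dfn:pushc}, the map $f_*$ is induced on cohomology by a morphism of Cousin complexes whose $y$-component $(f_*)_y$ is determined (see Definition~\ref{pfcompdef}) by the finite pushforward $(f|_y)_*$ on $H^0$ together with the support-extension isomorphisms $(i_y)_*$ and $(i_{f(y)})_*$. On the other hand, the open restriction $j_X^*$ (and similarly $j_Y^*$) acts on Cousin complexes by projecting onto the summands indexed by points of $X_0$ (resp.\ $Y_0$), and killing those indexed by points outside. Since $S$ is finite over $X$, a point $y\in S$ lies in $S_0$ if and only if $f(y)\in X_0$, so the decomposition of summands on $X$ matches the decomposition of summands on $Y$ under $f$.

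Second, I would check the point-wise compatibility. For $y\in S_0$, the finite morphism $f|_y\colon y\to f(y)$ is contained in $Y_0\to X_0$, hence the local construction of Definition~\ref{dfn:finpush} and the isomorphisms $(i_y)_*$, $(i_{f(y)})_*$ from Corollary~\ref{cor:ix} are literally the same whether one works over $X$ or over $X_0$ (this is because Lemma~\ref{lem:trsupiso}, from which Corollary~\ref{cor:ix} is derived, was already noted to be local on the base). Consequently $(f_*)_y$ on $Y$ and on $Y_0$ agree under the identifications produced by $j_X^*$ and $j_Y^*$. For $y\in S\setminus S_0$, the target $H^{n-d}_{f(y)}(X,M_{*-d}\otimes\mathcal{L})$ is killed by $j_X^*$, matching the fact that the corresponding summand of $C^\bullet_S(Y,\ldots)$ is killed by $j_Y^*$.

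Combining these observations yields a commutative square of morphisms of Cousin complexes, and passing to cohomology gives the claim. The only step that requires any care is the first one, namely making precise that the isomorphisms $(i_y)_*$ and $(i_{f(y)})_*$ used to define $(f_*)_y$ are natural under shrinking the base; but this is immediate from the construction in Corollary~\ref{cor:ix}, where one chooses a Zariski open neighborhood of the point in which the closure is smooth, and such a neighborhood can be taken inside $X_0$ when $f(y)\in X_0$.
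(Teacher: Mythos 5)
Your proposal is correct and follows essentially the same route as the paper: the paper also reduces to a square of Cousin complexes, observes that $j_X^*$ and $j_Y^*$ act as the projections onto the summands indexed by points of $X_0$ and $Y_0$ respectively, and declares the resulting commutativity immediate. Your additional remarks — that $y\in S_0$ iff $f(y)\in X_0$ because $S$ is finite over $X$, and that $(f_*)_y$ is local on the base — are exactly the details the paper leaves implicit.
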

\begin{proof}
The claim follows from the commutativity of the following diagram consisting of Cousin complexes which is immediate.
\[
\xymatrix{
	C^\bullet_S(Y,M_*\otimes\omega_{Y/X}\otimes f^*\mathcal{L}) \ar[d]^{f_*} \ar[r]^(0.43){j^*_Y} & C^\bullet_{S_0}(Y_0,M_*\otimes\omega_{Y_0/X_0}\otimes (f\circ j_Y)^*\mathcal{L}) \ar[d]^{f_*} \\
	C^\bullet_{f(S)}(X,M_{*-d}\otimes \mathcal{L})[-d] \ar[r]^(0.43){j^*_X} & C^\bullet_{f(S_0)}(X_0,M_{*-d}\otimes j_X^*\mathcal{L})[-d]
}
\]
Here $j^*_X$ is identity on $H^i_x(X,M_{*-d}\otimes\mathcal{L})$ if $x\in X_0$ and zero otherwise, and similarly for $j^*_Y$.
\end{proof}

\begin{rem}\label{coincideMorel}
It seems that the pushforward maps constructed in Definition~\ref{pfcompdef} coincide with the ones introduced in~\cite[Chapter~5]{Morel}. We do not need this fact in this paper so we do not check the details.
\end{rem}

\begin{dfn}\label{goodemb} 
	Let $X,Y\in \Sm_k$. We say that 
	\begin{enumerate}
		\item 
		a closed embedding $i\colon Y\to X$ admits a framed enhancement if $\Fr_0(X\xleftarrow{i} Y)\neq \emptyset$,
		\item
		a morphism $f\colon Y\to X$ has a framed enhancement if for some $m$ there exists a Zariski open subset $U\subset \A^{m}_X$ and a closed embedding $i\colon Y\to U$ over $X$ such that $i$ admits a framed enhancement.
	\end{enumerate}
\end{dfn}

\begin{rem}
If $f\colon Y\to X$ admits a framed enhancement then
\begin{enumerate}
\item
for every $V\in\Sm_k$ the morphism $f\times\id_V\colon Y\times V\to X\times V$ admits a framed enhancement,
	\item 
for any Zariski open subset $Y_0\subset Y$ the morphism $f|_{Y_0}\colon Y_0\to X$ admits a framed enhancement.
\end{enumerate}
\end{rem}

\begin{lem}\label{semilocalgood} 
For $X\in\Sm_k$ and any finite collection of points $x_1,\hdots, x_n\in X$ there exists a Zariski open $U\subset X$ such $x_i\in U$ for all $i$ and the morphism $U\to\Spec k$ admits a framed enhancement.
\end{lem}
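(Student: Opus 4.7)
The strategy is to realize the structure map of a well-chosen affine Zariski neighborhood of $\{x_1,\ldots,x_n\}$ as a closed embedding into an open subset of affine space with trivializable conormal sheaf, and then invoke the bijection of Section~\ref{ftns} between framed enhancements and normal framings. To begin, Lemma~\ref{semilocetale} produces an affine Zariski open $V_0\subset X$ containing every $x_i$ and equipped with an \'etale morphism $V_0\to\A^d_k$, where $d=\dim X$. Being affine, $V_0$ admits a closed embedding $i_0\colon V_0\hookrightarrow\A^N_k$ for some $N$.

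Let $\mathcal{N}$ denote the conormal sheaf of $i_0$, a locally free $\Os_{V_0}$-module of rank $N-d$. Its restriction to the semilocal ring $\Os_{V_0,\{x_1,\ldots,x_n\}}$ is free, since every finitely generated projective module over a commutative semilocal ring is free. Spreading a basis out yields $f\in\Os(V_0)$ nonvanishing at each $x_i$ such that $\mathcal{N}|_V$ is already trivial on the affine open $V=V_0[f^{-1}]$. Because $V_0\setminus V$ is closed in the closed subvariety $i_0(V_0)\subset\A^N_k$, the embedding $V\hookrightarrow\A^N_k$ factors as a closed embedding $i\colon V\hookrightarrow W$ into the Zariski open $W=\A^N_k\setminus i_0(V_0\setminus V)$; moreover the conormal sheaf of $i$ coincides with $\mathcal{N}|_V$ and is therefore trivial.

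Pick any trivialization $\phi\colon \Os_V^{N-d}\xrightarrow{\simeq}\mathcal{N}|_V$; the pair $(i,\phi)$ defines an element of $\Frn_0(W\xleftarrow{i} V)$. Since $V$ is affine and the composition $V\hookrightarrow V_0\to\A^d_k$ is \'etale, Lemma~\ref{lem:frameeqsur} applied to the closed embedding $i$ yields the surjectivity of $\Theta_i\colon \Fr_0(W\xleftarrow{i} V)\to\Frn_0(W\xleftarrow{i} V)$, and in particular $\Fr_0(W\xleftarrow{i} V)\neq\emptyset$. Combined with the open inclusion $W\hookrightarrow\A^N_k$, this exhibits a framed enhancement of $V\to\Spec k$ in the sense of Definition~\ref{goodemb}, and we take $U=V$. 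The only substantive step is trivializing $\mathcal{N}$ in a Zariski neighborhood of the $x_i$; once this is achieved, Section~\ref{ftns} converts the normal datum $(i,\phi)$ formally into an actual framed enhancement.
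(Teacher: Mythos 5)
Your proof is correct, but it diverges from the paper's at the key step of trivializing the (co)normal bundle. The paper keeps the Zariski open $U$ produced by Lemma~\ref{semilocetale} and works with a closed embedding $i\colon U\to\A^m_k$ into the \emph{whole} affine space: since the \'etale map $U\to\A^d_k$ trivializes $T_U$, the normal bundle $N_i\cong (T_{\A^m_k}|_U)/T_U$ is stably trivial, and composing with a linear embedding $\A^m_k\hookrightarrow\A^{m+n}_k$ adds enough trivial summands to make it genuinely trivial. You instead shrink once more, using freeness of finitely generated projective modules over the semilocal ring at $\{x_1,\dots,x_n\}$ to trivialize the conormal sheaf on a principal open $V=V_0[f^{-1}]$, and then embed $V$ as a closed subscheme of an open $W\subset\A^N_k$. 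Both routes are valid: yours replaces the stabilization trick by a semilocal freeness argument (and does not need the \'etale map for the trivialization, only for the hypotheses of Lemma~\ref{lem:frameeqsur}), at the cost of an extra shrinking and a non-affine ambient open $W$. On that last point, note that the proof of Lemma~\ref{lem:frameeqsur} lifts sections of $I/I^2$ to sections of $I$ over (a neighborhood of $Z$ in) $\A^n_X$, which is cleanest when the ambient scheme is affine; your $W=\A^N_k\setminus i_0(V_0\setminus V)$ is covered by the lemma's stated hypotheses, but you could sidestep any worry by extending $f$ to a polynomial $F$ on $\A^N_k$ and taking $W=D(F)$, which is affine, contains $V=V_0\cap D(F)$ as a closed subscheme, and in fact sits inside your $W$. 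With that cosmetic adjustment the argument is airtight.
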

\begin{proof}
Applying Lemma~\ref{semilocetale} choose a Zariski open $U\subset X$ such $x_i\in U$ for all $i$ and that there exists an \'etale map $g\colon U\to\A^d_k$. Choose a closed embedding $i\colon U\to\A^{m}_k$. The normal bundle $N_i\cong (T_{\A^{m}_k}|_{U})/T_{U}$ is stably trivial. Composing $i$ with the closed embedding $\A^{m}_k\to \A^{m+n}_k$, $t\mapsto (t,0)$, for $n$ big enough we may assume that the tangent bundle $N_i$ is trivial and $U$ is closed in $\A^m_k$. Thus $\Frn_0(\A^m_k\xleftarrow{i} U)\neq\emptyset$. Applying Lemma~\ref{lem:frameeqsur} we get the claim.
\end{proof}

\begin{dfn}\label{cf}
	Let $U\subset \A^{m}_X$, $a\in \Fr_0(U\xleftarrow{i} Y)$ be a framed enhancement of a relative dimension $d=\dim Y-\dim X$ morphism $f\colon Y\to X$. Then for every closed subset $S\subset Y$ finite over $X$ the morphism
	\[
	a\colon U/(U-i(Y)) \to Y_+\wedge T^{m-d}
	\]	
	gives rise to a morphism of sheaves
	\[
	a_f \colon X/(X-f(S))\wedge \PP^{\w m} \to X_+\wedge \PP^{\w m}/	(X_+\wedge \PP^{\w m}-i(S)) \xleftarrow{\simeq} U/(U-i(S)) \to Y/(Y-S)\wedge T^{m-d}.
	\]
\end{dfn}

\begin{lem}\label{pfcoincide}
	Let $M_*$ be a homotopy module. Then in the notation of Definition~\ref{cf} the following diagram commutes.
	\[
	\xymatrix{
	H^i_S(Y,M_*\otimes\omega_{Y/X}) \ar[rr]^{f_*} \ar[dr]_{\otimes \det a} & &  H^{i-d}_{f(S)}(X,M_{*-d})\\
	& H^i_S(Y,M_*) \ar[ur]_{a_f^*} & 
}
\]
Here $\det a$ is given by Definition~\ref{dfn:det} and $a_f^*$ is induced by the pullback along $a_f$ similar to Definition~\ref{frcohaction}.
\end{lem}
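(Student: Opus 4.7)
The plan is to reduce both sides of the diagram to a pointwise computation on the Cousin complex, then verify the equality directly in the resulting finite case.

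By Proposition~\ref{prop:push}, $f_*$ is defined at the level of Cousin complexes via the pointwise maps $(f_*)_y$ of Definition~\ref{pfcompdef}, which factor through the isomorphisms of Corollary~\ref{cor:ix} and thereby reduce to the finite pushforward of Definition~\ref{dfn:finpush} applied to $f|_y : y \to f(y)$ for each $y \in S$. The map $a_f^* \circ (\otimes \det a)$ is likewise induced by a morphism of sheaves on $X$, so it respects the Cousin filtration by support and admits an analogous decomposition into pointwise components. Using Lemma~\ref{lem:opencommute} to shrink $X$ around each $f(y)$ and $Y$ around each $y$, I reduce to the case in which $f : Y \to X$ is itself a finite morphism of smooth varieties (so $d=0$), $S=Y$, and a framed enhancement $a = (i(Y), V, \phi, g) \in \Fr_0(U \xleftarrow{i} Y)$ with $U$ open in $\A^m_X$ is given.

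In this finite setting I would construct from $a$ an element $\tilde a \in \Fr_m(X \xleftarrow{f} Y)$ by retaining the same \'etale neighborhood $V$, now regarded as an \'etale neighborhood of $i(Y)$ inside $\A^m_X$, the same framing functions $\phi$ cutting out $i(Y)$, and the same retraction $g : V \to Y$. Since $U$ is open in $\A^m_X$, the conormal sheaves of $i$ in $U$ and in $\A^m_X$ coincide, hence the induced trivialization $\det \tilde a$ of Definition~\ref{dfn:det} agrees with $\det a$ as an isomorphism $\omega_{Y/X} \xrightarrow{\simeq} \Os_Y$.

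It remains to identify $\tilde a^*$ with $a_f^*$. The sheaf map $a_f$ of Definition~\ref{cf} is, essentially by construction, the sheaf map representing $\tilde a$ viewed as an element of $\Hom_{\Shvb}(X_+ \wedge \PP^{\wedge m}, Y_+ \wedge T^m)$ after passing to the quotients by $X - f(S)$ in the source and $Y - S$ in the target. Consequently $a_f^*$ coincides with $\tilde a^*$ as defined in Definition~\ref{frcohaction}. Applying Definition~\ref{dfn:finpush} with the trivial line bundle and $\theta = \id$ then yields
\[
f_* \;=\; (p^{\tilde a, \id})_* \;=\; \tilde a^* \circ (\otimes \det \tilde a) \;=\; a_f^* \circ (\otimes \det a),
\]
which is the claimed equality. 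The main obstacle I anticipate is the careful bookkeeping of the canonical $\A^1$-equivalences $\sigma^m : \PP^{\wedge m} \to T^m$ and the associated Thom collapse identifications, which are implicit in both the framed-correspondence pullback of Definition~\ref{frcohaction} and the pullback along $a_f$ of Definition~\ref{cf}, and which must be matched precisely to establish $\tilde a^* = a_f^*$ on the nose rather than up to a natural isomorphism in $\SH(k)$.
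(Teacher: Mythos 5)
There is a genuine gap: the reduction in your second paragraph does not work. You propose to shrink $X$ and $Y$ by Zariski opens until $f$ is itself a finite morphism with $d=0$ and $S=Y$. But $f$ has relative dimension $d=\dim Y-\dim X$, and only the closed subset $S$ (not $Y$) is assumed finite over $X$; no open shrinking of source and target changes this. For instance, for the projection $f\colon\A^1_X\to X$ with $S$ the zero section, every Zariski neighborhood of a point of $S$ still has one-dimensional fibers over a dense open of $X$. Consequently the element $\tilde a\in\Fr_m(X\xleftarrow{f}Y)$ you construct does not exist in general: the set $\Fr_m(X\xleftarrow{f}Y)$ of Definition~\ref{frp} is only defined for finite $f$ (its elements have support finite over $X$ and mapping isomorphically onto $Y$), and likewise $i(Y)$ is closed in $\A^m_X$ only when $f$ is finite. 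Your argument therefore only treats the case where $f$ is already finite, in which the statement is essentially a restatement of Definition~\ref{dfn:finpush}; the content of the lemma is precisely the non-finite case.

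What is missing is the comparison, for each $y\in Y^{(n)}\cap S$, between the pointwise map $(f_*)_y$ of Definition~\ref{pfcompdef} --- which is defined by routing through $\overline{y}$ via the isomorphisms $(i_y)_*$ of Corollary~\ref{cor:ix} --- and the component of $a_f^*\circ(\otimes\det a)$ at $y$. The paper's proof does exactly this: it chooses a neighborhood $U$ of $y$ in which $V=\overline{y}\cap U$ is smooth and the closed embedding $i\colon V\to U$ admits a framed enhancement $c\in\Fr_0(U\xleftarrow{i}V)$, observes that $c\circ a_f^V$ is then a framed enhancement of the \emph{finite} morphism $f\circ i\colon V\to W=f(U)$, and concludes by combining the definition of the finite pushforward, the functoriality of Lemma~\ref{pffinite}, and the fact that $i_*$ is an isomorphism (Lemma~\ref{lem:trsupiso}). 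This composition of enhancements (in the spirit of Lemma~\ref{lem:enhancefunct}) is the key step your proposal omits, and without it the triangle in the statement cannot be verified at the points of $S$ of positive codimension.
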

\begin{proof}
For every closed subset $Z\subset S$ the morphism $a_f$ induces a morphism 
\[
a_f^Z\colon  X/(X-f(Z))\w\PP^{\w m}\to Y/(Y-Z)\w T^{m-d}
\]
giving a homomorphism 
\[
(a_f^Z)^*\colon H^i_{Z}(Y,M_*)\to H^{i-d}_{f(Z)}(X,M_{*-d}).
\]
Taking the respective colimit we obtain a morphism of Cousin complexes
\[
a_f^*\colon C^\bullet_S(Y,M_*)\to C^\bullet_{f(S)}(X,M_{*-d})[-d]
\]
inducing the map $a_f^*$ from the statement of the lemma.

It is sufficient to check that for every $y\in Y^{(n)}\cap S$ the following diagram commutes.
\[
	\xymatrix{
	H^n_y(Y,M_*\otimes\omega_{Y/X}) \ar[rr]^{f_*} \ar[dr]_{\otimes \det a} & &  H^{n-d}_{f(y)}(X,M_{*-d})\\
	& H^n_y(Y,M_*) \ar[ur]_{(a_f^y)^*} & 
}
\]
For a point $y\in Y^{(n)}\cap S$ choose a Zariski open subset $y\in U\subset Y$ such that $V=\overline{y}\cap U$ is smooth and the closed embedding $i\colon V\to U$ admits a framed enhancement $c\in\Fr_0(U\xleftarrow{i} V)$. For $W=f(U)$ the correspondence $a_f$ induces morphism 
\[
a_f^V\colon W/(W-f(V))\w\PP^{\w m}\to U/(U-V)\w T^{m-d}.
\] 
Then $c\circ a_f^V\in \Fr_{m}(W\xleftarrow{f\circ i} V)$
is a framed enhancement of the finite map $f\circ i\colon V\to W$. 
Consider the following diagram.
\[
\xymatrix{
H^0(V,M_{*-n}\otimes\omega_{V/X}) \ar[dd]_{\otimes \det (c\circ a_f^V)}^{\cong} \ar[rr]^(0.55){(f\circ i)_*} \ar[dr]^{i_*}_{\cong} & & H^{n-d}_{f(V)}(W,M_{*-d}) \\
& H^n_V(U,M_*\otimes\omega_{U/X})\ar[ru]^{f_*} \ar[dr]^{\otimes \det a}  & \\
H^0(V,M_{*-n}) \ar[rr]^(0.55){c^*} & &  H^n_V(U,M_*) \ar[uu]_{(a_f^V)^*}
}
\]
The outer contour and the lower square commute by the definition of pushforward maps for finite morphism. The upper triangle commutes by functoriality of pushforward maps. The map $i_*$ is an isomorphism by Lemma~\ref{lem:trsupiso}. Thus the right triangle commutes as well and taking colimits along neighborhoods of $y$ we obtain the claim.
\end{proof}

\begin{lem}\label{basechange2}
	Let $f\colon Y\to X$  be a morphism in $\Sm_k$ of relative dimension $d=\dim Y-\dim X$. Suppose that $f$ admits a framed enhancement. Then for $c\in \Fr_n(V,W)$ and a closed subset $S\subset W\times Y$ finite over $W\times X$ the following diagram commutes.
	\[
	\xymatrix{
		H^i_S(W\times Y,M_*\otimes\omega_{Y/X})\ar[d]^{(\id_W\times f)_*}\ar[r]^(0.47){(c\boxtimes \id_X)^*} & H^i_{c^{-1}(S)}(V\times Y,M_*\otimes \omega_{Y/X})\ar[d]^{(\id_V\times f)_*}\\
		H^{i-d}_{f(S)}(W\times X,M_{*-d})\ar[r]^(0.47){(c\boxtimes \id_Y)^*} & H^{i-d}_{(f\times c^{-1})(S)}(V\times X,M_{*-d})
	}\]
	Here were shortened the notation as
	\[
	c^{-1}(S)=(c\boxtimes \id_X)^{-1}(S),\quad f(S)=(\id_W\times f)(S),\quad (f\times c^{-1})(S)= (\id_V\times f) ((c\boxtimes \id_X)^{-1}(S)).
	\]
\end{lem}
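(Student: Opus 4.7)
\medskip

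\noindent\textbf{Proof proposal.} The plan is to exploit the hypothesis that $f$ admits a framed enhancement in order to rewrite both pushforward maps in the diagram via Lemma~\ref{pfcoincide} as pullbacks along framed correspondences (twisted by $\det a$), and then argue that the two resulting composites of framed pullbacks represent the same morphism of sheaves up to permutations.

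First I would fix a framed enhancement of $f$: choose $U\subset \A^m_X$, a closed embedding $i\colon Y\to U$ over $X$, and $a\in\Fr_0(U\xleftarrow{i} Y)$. By the remark after Definition~\ref{goodemb}, the products $\id_W\times a$ and $\id_V\times a$ are framed enhancements of $\id_W\times f$ and $\id_V\times f$ respectively (sitting inside $W\times U\subset \A^m_{W\times X}$ and $V\times U\subset \A^m_{V\times X}$). Denote by $a_f^W$ and $a_f^V$ the induced morphisms of pointed sheaves from Definition~\ref{cf} associated with these enhancements, and note that $\det(\id_W\times a)$ is simply the pullback of $\det a$ along the projection $W\times Y\to Y$ (similarly for $V$). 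Applying Lemma~\ref{pfcoincide} to $\id_W\times f$ and $\id_V\times f$, the pushforwards in the diagram factor as $(a_f^W)^*\circ(\otimes\det a)$ and $(a_f^V)^*\circ(\otimes\det a)$ respectively.

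Next I split the square into two horizontal slices: the upper one comparing the twists with the pullback $(c\boxtimes\id_Y)^*$, and the lower one comparing the untwisted framed pullbacks. The upper slice commutes because $\det a$ is a section pulled back from $Y$, hence invariant under the action of $c$ on the $V/W$ factor; compatibility of the $\underline{\GW}$-action with framed pullbacks follows from Lemma~\ref{frcomextprod} applied to the external product by the unit correspondence. The lower slice reduces the claim to the identity
\[
(c\boxtimes\id_X)^*\circ(a_f^W)^* \;=\; (a_f^V)^*\circ(c\boxtimes\id_Y)^*.
\]
Both sides are pullbacks along framed correspondences from $V\times X$ to $W\times Y$ (after smashing by the appropriate powers of $\PP^1$ and $T$). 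Unwinding Definitions~\ref{def:frn} and~\ref{def:frextprod}, both $(\id_W\boxtimes a_f)\circ(c\boxtimes\id_X)$ and $(c\boxtimes\id_Y)\circ(\id_V\boxtimes a_f)$ compute the external product $c\boxtimes a_f$, and they differ only by the canonical swaps permuting the $\PP$- and $T$-factors. Since the sheaf-level pullback is strictly functorial and $M_*$ is a homotopy module (so the induced automorphisms coming from such permutations act identically on both compositions), the required equality holds.

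The main obstacle I anticipate is the careful bookkeeping of swaps and twists: one must verify that the permutation isomorphisms introduced by the two different orders of external product contribute the \emph{same} sign/unit on both sides, so that the equality of the underlying morphisms of sheaves descends to an equality of induced maps on twisted cohomology with supports. This is a routine but delicate check using Definitions~\ref{def:frn}, \ref{def:frextprod} and~\ref{frcohaction}, and the compatibility of the module structure of Definition~\ref{KMWmodule} with framed pullback established in Lemma~\ref{frcomextprod}.
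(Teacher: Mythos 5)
Your proposal is correct and follows essentially the same route as the paper: fix a framed enhancement $a$ of $f$, use Lemma~\ref{pfcoincide} to rewrite both vertical pushforwards as $(a_{\id_W\times f})^*\circ(\otimes\det a)$ and $(a_{\id_V\times f})^*\circ(\otimes\det a)$, and then check that the resulting square of framed pullbacks commutes at the level of morphisms of pointed sheaves (the paper records exactly the sheaf-level diagram your ``lower slice'' identity expresses, and calls it straightforward). Your explicit treatment of the compatibility of the twist by $\det a$ with $(c\boxtimes\id_Y)^*$ via Lemma~\ref{frcomextprod} is a point the paper leaves implicit, but it is the same argument.
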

\begin{proof}
	Let $U\subset \A^{m}_X$, $a\in \Fr_0(U\xleftarrow{i} Y)$ be a framed enhancement of $f$. Lemma~\ref{pfcoincide} yields that 
	\[
	(\id_W\times f)_*=(a_{\id_W\times f})^*\circ (\otimes \det a),\quad (\id_V\times f)_*=(a_{\id_V\times f})^*\circ (\otimes \det a).
	\]
	The claim follows from the commutativity of the following diagram which is straightforward.
	\[
	\xymatrix @C=5em{
		\frac{V\times X}{V\times X -(f\times c^{-1})(S)}\w\PP^{\w m}\w\PP^{\w n}\ar[r]^{(c\boxtimes \id_X)\w \id}\ar[d]_{a_{(\id_V\times f)}\w\id} &  \frac{W\times X}{W\times X - f(S)}\w \PP^{\w m}\w T^n\ar[d]^{a_{(id_W\times f)}\w \id}\\
		\frac{V\times Y}{V\times Y-c^{-1}(S)}\w T^{m-d}\w\PP^{\w n}\ar[r]^{(c\boxtimes \id_Y)\w \id} & \frac{W\times Y}{W\times Y-S}\w T^{m-d}\w T^n
	}
	\]
\end{proof}

\begin{lem}\label{crossprodcommute} Let $f\colon Y\to X$  be a morphism in $\Sm_k$ of relative dimension $d=\dim Y-\dim X$ and $S\subset Y$ be a closed subset finite over $X$. Suppose that $f$ admits a framed enhancement. Then for a homotopy module $M_*$ and $c\in H^j_{S'}(V,\KMW_n)$ the following diagram commutes.
	\[
	\xymatrix{
		H^i_S(Y,M_*\otimes\omega_{Y/X})\ar[r]^-{f_*}\ar[d]^{-\times c} & H^{i-d}_{f(S)}(X,M_{*-d})\ar[d]^{-\times c}\\
		H^{i+j}_{S\times S'}(Y\times V,M_{*+n}\otimes\omega_{Y/X})\ar[r]^-{(f\times \id_V)_*} & H^{i-d+j}_{f(S)\times S'}(X\times V,M_{*+n-d})
	}
	\]
\end{lem}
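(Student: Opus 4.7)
The plan is to reduce the claim to the external-product compatibility for framed correspondences (Lemma~\ref{frcomextprod}) by invoking Lemma~\ref{pfcoincide} to express both pushforward maps as framed pullbacks. This is essentially the same reduction already used in the proof of Lemma~\ref{basechange2}, but with the horizontal pullbacks replaced by external products.

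To set this up, I would fix a framed enhancement of $f$: a Zariski open $U\subset\A^m_X$, a closed embedding $i\colon Y\to U$ over $X$, and an element $a\in\Fr_0(U\xleftarrow{i} Y)$. Taking the Cartesian product with $V$ yields a framed enhancement $a\times V\in\Fr_0(U\times V\xleftarrow{i\times\id_V} Y\times V)$ of $f\times\id_V$. By Definition~\ref{dfn:det} the trivialization $\det(a\times V)$ is the pullback of $\det a$ along the projection $Y\times V\to Y$; and by inspection of Definition~\ref{cf} the induced sheaf-level morphism $(a\times V)_{f\times\id_V}$ coincides, up to the standard twist isomorphisms, with the external product $a_f\boxtimes\id_V$ of $a_f\in\Fr_m(X,Y)$ (supported on $f(S)$) with the identity correspondence $\id_V\in\Fr_0(V,V)$ (supported on $S'$).

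Next I would apply Lemma~\ref{pfcoincide} to each pushforward to obtain
\[
f_*=a_f^*\circ(\otimes\det a),\qquad (f\times\id_V)_*=(a_f\boxtimes\id_V)^*\circ(\otimes\det a).
\]
Since $\det a$ is a section on $Y$, the operation $\otimes\det a$ commutes with the external product by $c\in H^j_{S'}(V,\KMW_n)$. Consequently the commutativity of the diagram in the statement reduces to the identity
\[
(a_f\boxtimes\id_V)^*(\beta\times c)=a_f^*(\beta)\times c\qquad\text{for }\beta\in H^i_S(Y,M_*),
\]
which is precisely the case $a_1=a_f$, $a_2=\id_V$ of Lemma~\ref{frcomextprod}.

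The only point demanding real care is the sheaf-level identification of $(a\times V)_{f\times\id_V}$ with $a_f\boxtimes\id_V$; this amounts to tracking the twist isomorphisms from Definition~\ref{def:frextprod} through the explicit construction of $a_f$ in Definition~\ref{cf}, and introduces no new conceptual content. Everything else is a direct application of lemmas already established.
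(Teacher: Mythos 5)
Your proposal is correct and follows essentially the same route as the paper: express $f_*$ and $(f\times\id_V)_*$ via Lemma~\ref{pfcoincide} as $a_f^*\circ(\otimes\det a)$ and $(a_f\boxtimes\id)^*\circ(\otimes\det a)$, observe that the twist commutes with $-\times c$, and check that the external product is compatible with pullback along $a_f\boxtimes\id$. The only cosmetic difference is that the paper verifies this last compatibility by a direct computation with the representing morphisms in $\mathrm{D}(\Ab)$ rather than by citing Lemma~\ref{frcomextprod}, whose statement covers honest framed correspondences rather than the supported morphism $a_f$ — but its one-line proof applies verbatim, so nothing is lost.
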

\begin{proof}
	Let $U\subset \A^{m}_X$, $a\in \Fr_0(U\xleftarrow{i} Y)$ be a framed enhancement of $f$. Lemma~\ref{pfcoincide} yields that 
	\[
	f_*=a_{f}^*\circ (\otimes \det a),\quad (f\times \id_V)_*=a_{f\times \id_V}^*\circ (\otimes \det a).
	\]
	Moreover, $a_{f\times \id_V}=a_f\boxtimes \id_{V/V-S'}$.

	For $m\in H^i_S(Y,M_*)$ given by a morphism 
	\[
	m\colon \Z[Y/Y-S] \to M_*[i]
	\]
	in $\mathrm{D}(\Ab(k))$ the external product $m\times c$ is given by the map 
	\[
	m\times c\colon \Z[Y/Y-S]\otimes\Z[V/V-S']\to M_*[i]\times \KMW_n[j]\to M_{*+n}[i+j].
	\]
	Then 
	\[
	(f\times \id_V)_*(m\times c)=a_{f\times \id_V}\circ (m\times c)=(a_f\circ m)\times c = f_*(m)\times c
	\]
	and the claim follows.
\end{proof}

\section{Milnor-Witt correspondences}\label{mwc}

\begin{dfn}\cite[\S 4]{CF} For $X,Y\in\Sm_k$ with $\dim Y=d$ and a closed subset $S\subset X\times Y$ put
\[
\wCor_S(X,Y)=H^d_S(X\times Y,\KMW_d\otimes\omega_Y).
\]		
The group of finite MW-correspondences is defined as
\[
\wCor(X,Y)=\colim\limits_S \wCor_S(X,Y)
\]
where the colimit is taken along all the admissible closed subsets $S\subset X\times Y$, i.e. the closed subsets $S\subset X\times Y$ such that every irreducible component of $S$ is finite and surjective over some irreducible component of $X$. See~\cite[\S 4.2]{CF} for the definition of composition of finite MW-correspondences leading to the category $\wCor_k$ with the objects those of $\Sm_k$ and the morphisms given by $\wCor(X,Y)$. An additive contravariant functor $\wCor_k\to \mathrm{Ab}$ is called a presheaf with MW-transfers.
\end{dfn}

\begin{rem} \label{rem:untwistcor}
For $X,Y\in\Sm_k$ with $\dim Y=d$ and a closed subset $S\subset X\times Y$ let $L_Y$ be the total space of the canonical line bundle $\omega_Y$ and $i\colon X\times Y\to X\times L_Y$ be the zero section. Then
\begin{align*}\label{wcorascoh}
i_*\colon\wCor_S(X,Y)=H^d_S(X\times Y,\KMW_d\otimes\omega_Y)\xrightarrow{\simeq} H^{d+1}_S(X\times L_Y,\KMW_{d+1}).
\end{align*}
is an isomorphism by Corollary~\ref{killtwist}.
\end{rem}

\begin{dfn}\label{frcorpairing} Let $X,Y,V\in\Sm_k$ and $S\subset X\times Y$ be a closed subset. Then every $a\in\Fr_n(V,X)$ defines a unique homomorphism $a^*$ making the following diagram commute.
\[
\xymatrix @C=7em{
\wCor_S(X,Y) \ar[r]^{a^*} \ar[d]_{\cong} & \wCor_{a^{-1}(S)}(V,Y) \ar[d]^\cong\\
H^{d+1}_S(X\times L_Y,\KMW_{d+1}) \ar[r]^{(a\boxtimes \id_{L_Y})^*} & H^{d+1}_{a^{-1}(S)}(V\times L_Y,\KMW_{d+1})
}
\]
Here $a^{-1}(S)=(a\boxtimes\id_Y)^{-1}(S)$, the vertical isomorphisms are given by Remark~\ref{rem:untwistcor} and the bottom morphism is the pullback on the cohomology introduced in Definition~\ref{frcohaction}.

Note that if $S\subset X\times Y$ is admissible then $a^{-1}(S)\subset V\times Y$ is admissible as well. The homomorphism $a^*$ commutes with the inclusions of admissible subsets and gives rise to a homomorphism
\[
a^*\colon\wCor(X,Y)\to\wCor(V,Y).
\]
It is straightforward to check that $(a\circ b)^*=b^*\circ a^*$ for $W\in\Sm_k$ and $b\in \Fr_m(W,V)$. Then the above rule endows $\wCor(-,Y)$ with the structure of a framed presheaf. Lemma~\ref{lem:additivity} yields that this structure descends to the structure of a $\ZF_*$-presheaf and gives rise to a homomorphism
\[
\mathcal{D}\colon \ZF_*(X,Y)\to\wCor(X,Y),\quad \mathcal{D}(a)=a^*(1_Y)
\]
where $1_Y\in\wCor(Y,Y)$ is the identity morphism.
\end{dfn}

\begin{lem}\label{nochoice}
	In the notation of Definition~\ref{frcorpairing} suppose that $\omega_Y$ is trivial and fix a trivialization $\theta\colon \omega_Y\xrightarrow{\simeq} \Os_Y$. Then the following diagram commutes.
	\[
	\xymatrix @C=7em{
		\wCor_S(X,Y) \ar[r]^{a^*} \ar[d]_{=} & \wCor_{a^{-1}(S)}(V,Y) \ar[d]^=\\
		H^{d}_S(X\times Y,\KMW_{d}\otimes \omega_Y) \ar[d]_{\cong}^{\otimes \theta} & H^{d}_{a^{-1}(S)}(V\times Y,\KMW_{d}\otimes \omega_Y) \ar[d]^{\cong}_{\otimes \theta} \\
		H^{d}_S(X\times Y,\KMW_{d})\ar[r]^{(a\otimes \id_Y)^*} & H^{d}_{a^{-1}(S)}(V\times Y,\KMW_{d})
	}
	\]
\end{lem}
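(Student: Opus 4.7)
The plan is to unfold the definition of the top $a^*$ via the zero section $i\colon X \times Y \hookrightarrow X \times L_Y$ from Definition~\ref{frcorpairing}, then use Corollary~\ref{killtwist} to translate the $L_Y$-pushforward into an $\A^1$-pushforward via the trivialization $\theta$, and finally reduce everything to the fact that both the zero-section pushforward and the framed pullback are induced by morphisms of pointed sheaves.

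By construction of $a^*$ in the top row (see the defining diagram of Definition~\ref{frcorpairing}), one has $i^V_* \circ a^* = (a \boxtimes \id_{L_Y})^* \circ i^X_*$, so the upper square of the diagram in Lemma~\ref{nochoice} commutes tautologically. Hence it suffices to show that, after conjugating by the vertical isomorphisms $\otimes\theta$, the composite $(i^V_*)^{-1} \circ (a \boxtimes \id_{L_Y})^* \circ i^X_*$ agrees with $(a \boxtimes \id_Y)^*$. Applying Corollary~\ref{killtwist} to the line bundle $\pi_Y^*\omega_Y$ on $X \times Y$ and on $V \times Y$ yields commutative squares
\[
(\otimes \theta) \circ i^X_* = (i_{\A^1})_* \circ (\otimes \theta), \qquad (\otimes \theta) \circ i^V_* = (i_{\A^1})_* \circ (\otimes \theta),
\]
under the induced identifications $X \times L_Y \cong \A^1 \times X \times Y$ and $V \times L_Y \cong \A^1 \times V \times Y$. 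Under the same identifications $\id_{L_Y}$ corresponds to $\id_{\A^1 \times Y}$ and hence, by naturality of the external product of framed correspondences, the pullback $(a \boxtimes \id_{L_Y})^*$ becomes $(a \boxtimes \id_{\A^1 \times Y})^*$.

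The claim therefore reduces to the commutativity of the square
\[
\xymatrix@C=3em{
H^{d}_{S}(X \times Y, \KMW_d) \ar[r]^{(a \boxtimes \id_Y)^*} \ar[d]_{(i_{\A^1})_*} & H^{d}_{a^{-1}(S)}(V \times Y, \KMW_d) \ar[d]^{(i_{\A^1})_*} \\
H^{d+1}_{S}(\A^1 \times X \times Y, \KMW_{d+1}) \ar[r]^{(a \boxtimes \id_{\A^1 \times Y})^*} & H^{d+1}_{a^{-1}(S)}(\A^1 \times V \times Y, \KMW_{d+1}).
}
\]
Both horizontal arrows are induced by the morphism of pointed sheaves underlying $a \boxtimes (\cdot)$ in the sense of Definition~\ref{frcohaction}, and both vertical arrows are induced by the morphism of pointed sheaves coming from the suspension isomorphism for a trivial $\A^1$-bundle. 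The two composite sheaf morphisms coincide by the naturality of the external product, so the induced maps on cohomology agree. The main obstacle is the careful bookkeeping of the twists by $\omega_Y$, the insertion and reordering of the $\A^1$ factors, and the verification that every map involved is indeed induced by an explicit morphism of pointed Nisnevich sheaves on $\Sm_k$; once that formalism is laid out, the commutativity is forced by functoriality.
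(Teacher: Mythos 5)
Your proof is correct and follows the same route as the paper, whose entire argument is ``Follows from Corollary~\ref{killtwist}'': you simply spell out how that corollary converts the zero-section pushforward into $X\times L_Y$ to the suspension $(i_{\A^1})_*$ via $\theta$, and then observe that the remaining square commutes because all maps are induced by morphisms of sheaves. The extra bookkeeping you supply (the identification $L_Y\cong\A^1\times Y$ and the final base-change square, which is an instance of Lemma~\ref{basechange2}) is exactly what the paper leaves implicit.
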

\begin{proof}
Follows from Corollary~\ref{killtwist}.
\end{proof}

\begin{dfn}\label{coractiondef}
Let $X,Y,V\in\Sm_k$, $S\subset V\times X$ be an admissible closed subset and $Z\subset X\times Y$ be a closed subset. Denote $p\colon V\times X\times Y\to V\times Y$ the projection and put $d=\dim X$. For a homotopy module $M_*$ define a pairing
\[
\wCor_S(V,X)\times H^i_Z(X\times Y,M_*)\xrightarrow{\cup} H^i_{S\cdot Z}(V\times Y,M_*)
\]
with $S\cdot Z=p(S\times Y\cap V\times Z)$ as the following composition
\[
\xymatrix{
	\wCor_S(V,X)\times H^i_Z(X\times Y,M_*) \ar[d]^= \ar[r]^\cup &  H^i_{S\cdot Z}(V\times Y,M_*)\\
H^d_S(V\times X,\KMW_d\otimes\omega_X)\times H^i_Z(X\times Y,M_*) \ar[dr]_{\times} & H^{d+i}_{(S\times Y)\cap (V\times Z)}(V\times X\times Y,M_{*+d}\otimes\omega_X) \ar[u]_{p_*} \\
& H^{d+i}_{S\times Z}(V\times X\times X\times  Y,M_{*+d}\otimes\omega_X) \ar[u]_{\Delta_X^*}
}
\]
\end{dfn}

\begin{lem}\label{coractionrespectsfr}
In the notation of Definition~\ref{coractiondef} suppose that $X\to\Spec k$ admits a framed enhancement. Then for every $W\in \Sm_k$ and $a\in\Fr_n(W,V)$ the following square commutes.
\[
\xymatrix{
\wCor_S(V,X)\times H^i_Z(X\times Y,M_*)\ar[r]^(0.58)\cup \ar[d]^{a^*\times \id} & H^i_{S\cdot Z}(V\times Y,M_*)\ar[d]^{(a\boxtimes \id_Y)^*}\\
\wCor_{a^{-1}(S)}(W,X)\times H^i_Z(X\times Y,M_*)\ar[r]^(0.58)\cup & H^i_{a^{-1}(S)\cdot Z}(W\times Y,M_*)
}
\]
\end{lem}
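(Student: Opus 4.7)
The plan is to unfold the cup product from Definition~\ref{coractiondef} as
\[
\alpha \cup \beta = p_*\bigl(\Delta_X^*(\alpha \times \beta)\bigr),
\]
where $p\colon V \times X \times Y \to V \times Y$ is the projection killing the $X$-factor, $\Delta_X\colon V\times X\times Y \to V\times X\times X\times Y$ is the map inserting the diagonal in $X$, and $\times$ is the external product in cohomology. The strategy is to push the framed pullback $(a \boxtimes \id_Y)^*$ inside by commuting it past $p_*$, then past $\Delta_X^*$, and finally past $\times$, arriving at $p'_*\bigl(\Delta_X^*((a \boxtimes \id_X)^*\alpha \times \beta)\bigr)$, where $p'\colon W\times X\times Y \to W\times Y$ is the analogue of $p$. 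The latter expression coincides with $(a^*\alpha) \cup \beta$ once we identify $a^*$ on $\wCor$ with the twisted framed pullback $(a\boxtimes \id_X)^*$ via the naturality of the untwisting isomorphism of Remark~\ref{rem:untwistcor}.

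The step past $p_*$ is the heart of the argument and the only place where the hypothesis on $X$ enters. I would apply Lemma~\ref{basechange2} with its $f$ taken to be the projection $X\times Y \to Y$ (of relative dimension $\dim X$) and its framed correspondence taken to be $a$. The hypothesis that $X \to \Spec k$ admits a framed enhancement is inherited by $X\times Y \to Y$ via the remark following Definition~\ref{goodemb}, so Lemma~\ref{basechange2} yields
\[
(a \boxtimes \id_Y)^* \circ p_* = p'_* \circ (a \boxtimes \id_{X \times Y})^*.
\]
This is the main obstacle; without the framed-enhancement hypothesis on $X \to \Spec k$ the base change identity is not available.

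The remaining two commutations are essentially formal. Commuting $(a \boxtimes \id_{X\times Y})^*$ past $\Delta_X^*$ is naturality: both operations are realized by explicit morphisms of pointed sheaves (the framed pullback via Lemma~\ref{Vlemma}, the diagonal pullback as a morphism of schemes), and since $a$ affects only the $V/W$-factor while the diagonal affects only the $X$-factor, the two orders of composition yield the same morphism of sheaves. Commuting $(a \boxtimes \id_{X\times X\times Y})^*$ past the external product is Lemma~\ref{frcomextprod} with $\alpha$ in the $\KMW$-position (after untwisting $\omega_X$) and $\beta$ in the homotopy module position. The twist by $\omega_X$ is pulled back from the $X$-factor and is preserved by $(a\boxtimes\id)^*$, so the compatibility reduces to Lemma~\ref{frcomextprod} by Zariski-localizing on $X$ to trivialize $\omega_X$ and invoking Lemma~\ref{nochoice}. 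Combining the three identities produces the required equality $(a \boxtimes \id_Y)^*(\alpha \cup \beta) = (a^*\alpha) \cup \beta$.
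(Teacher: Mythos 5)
Your proposal is correct and follows essentially the same route as the paper: the paper's proof also unfolds $\cup$ into external product, $\Delta_X^*$, and $p_*$, and commutes $(a\boxtimes\id)^*$ past each stage using Lemma~\ref{frcomextprod} for the external product, naturality of pullbacks for the diagonal, and Lemma~\ref{basechange2} (applied to the projection off the $X$-factor, which inherits a framed enhancement from $X\to\Spec k$) for the pushforward. Your extra care with the $\omega_X$-twist and the identification of $a^*$ on $\wCor$ with the untwisted framed pullback is consistent with what the paper leaves implicit.
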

\begin{proof}
Recall that
	\[
	\wCor_S(V,X)=H^d_S(V\times X,\KMW_d\otimes\omega_X),\quad \wCor_{a^{-1}(S)}(W,X)=H^d_{a^{-1}(S)}(W\times X,\KMW_d\otimes\omega_X).
	\]
Then for every $b\in H^i_Z(X\times Y,M_*)$ we need to prove  commutativity of the following diagram.
\[
\xymatrix @C=3em{
	H^d_S(V\times X,\KMW_d\otimes\omega_X)\ar[r]^(0.48){(a\boxtimes \id)^*}\ar[d]^-{-\times b} 
	& 
	H^d_{a^{-1}(S)}(W\times X,\KMW_d\otimes\omega_X)\ar[d]^-{-\times b}
	\\
	H^{d+i}_{S\times Z}(V\times X\times X\times  Y,M_{*+d}\otimes\omega_X)\ar[r]^(0.48){(a\boxtimes \id)^*}\ar[d]^-{\Delta_X^*} 
	& 
	H^{d+i}_{a^{-1}(S)\times Z}(W\times X\times X\times Y,M_{*+d}\otimes\omega_X)\ar[d]^{\Delta_X^*} \\
	H^{d+i}_{(S\times Y)\cap (V\times Z)}(V\times X\times Y,M_{*+d}\otimes\omega_X)\ar[r]^(0.48){(a\boxtimes \id)^*}\ar[d]^-{p_{*}} 
	& 
	H^{d+i}_{(a^{-1}(S)\times Y)\cap (W\times Z)}(W\times X\times Y,M_{*+d}\otimes\omega_X)\ar[d]^-{q_*} \\
	H^i_{S\cdot Z}(A\times Y,M_*)\ar[r]^(0.48){(a\boxtimes \id)^*}
	& H^i_{a^{-1}(S\cdot Z)}(W\times Y,M_*)
}
\]
Here $p\colon V\times X\times Y\to V\times Y$ and $q\colon W\times X\times Y\to W\times Y$ are the projections. The top square commutes by Lemma~\ref{frcomextprod}, the middle square commutes since it consists of pullbacks, the bottom square commutes by Lemma~\ref{basechange2}.
\end{proof}

\begin{lem}\label{ourcorcomp}
	In the notation of Definition~\ref{coractiondef} put $d'=\dim Y$, let $L_Y$ be the total space of the canonical line bundle $\omega_Y$ and let $S'\subset X\times Y$ be an admissible closed subset. Then the following diagram commutes.
\[
\xymatrix{
\wCor_S(V,X)\times H^{d'+1}_{S'}(X\times L_Y,\KMW_{d'+1})\ar[r]^(0.6)\cup & H^{d'+1}_{S\cdot S'}(V\times L_Y,\KMW_{d'+1}) \\
\wCor_S(V,X)\times\wCor_{S'}(X,Y)\ar[u]_{\id\times i_*}^{\cong} \ar[r]^(0.6)\circ  & \wCor_{S\cdot S'}(V,Y) \ar[u]_{(\id\times i)_*}^{\cong}
}
\]
Here $i_*$ is the isomorphism from Remark~\ref{rem:untwistcor} and $\circ$ is the composition of MW-correspondences introduced in~\cite[\S 4.2]{CF}
\end{lem}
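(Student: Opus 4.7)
The plan is to unfold both the Calm\`es--Fasel composition $\beta \circ \alpha$ (from \cite[\S 4.2]{CF}) and the pairing $\cup$ (from Definition~\ref{coractiondef}) and to observe that both are built out of the same three operations---external product, diagonal pullback, and projection pushforward---on canonically related ambient spaces, with the isomorphism $i_*$ of Remark~\ref{rem:untwistcor} mediating between them.

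Explicitly, for $\alpha \in \wCor_S(V,X) = H^d_S(V \times X, \KMW_d \otimes \omega_X)$ and $\beta \in \wCor_{S'}(X,Y) = H^{d'}_{S'}(X \times Y, \KMW_{d'} \otimes \omega_Y)$ the Calm\`es--Fasel composition admits the expression
$$\beta \circ \alpha = p_*\bigl(\Delta_X^*(\alpha \times \beta)\bigr),$$
where $\Delta_X^*$ denotes the pullback along the partial diagonal $V \times \Delta_X \times \id_Y \colon V \times X \times Y \to V \times X \times X \times Y$ and $p \colon V \times X \times Y \to V \times Y$ is the projection, with pushforward in the sense of Definition~\ref{dfn:pushc}. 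By Definition~\ref{coractiondef} applied with $Y$ replaced by $L_Y$ and $M_* = \KMW_*$, the pairing $\cup$ sends $(\alpha, i_*\beta)$ to $q_*\bigl(\Delta_X^*(\alpha \times i_*\beta)\bigr)$, where $q \colon V \times X \times L_Y \to V \times L_Y$. The target commutativity thus amounts to the identity
$$q_*\bigl(\Delta_X^*(\alpha \times i_*\beta)\bigr) = (\id_V \times i)_*\bigl(p_*(\Delta_X^*(\alpha \times \beta))\bigr).$$

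Writing $i = \id_X \times i_Y$ with $i_Y \colon Y \to L_Y$ the zero section of $\omega_Y$, this identity decomposes into three compatibilities. First, external product commutes with pushforward along $i_Y$ on the second factor: $\alpha \times i_*\beta = (\id_{V \times X \times X} \times i_Y)_*(\alpha \times \beta)$. Second, the diagonal pullback $\Delta_X^*$ commutes with $(\id_{V \times X} \times i_Y)_*$, which is base change for the evidently Cartesian square whose horizontal arrows are partial diagonals $V \times \Delta_X \times \id$ and whose vertical arrows are zero sections of a line bundle pulled back from the base. Third, $q_* \circ (\id_{V \times X} \times i_Y)_* = (\id_V \times i_Y)_* \circ p_*$ by functoriality of the pushforward (Lemma~\ref{pffinite}) applied to the commuting square whose horizontal arrows are zero sections and whose vertical arrows are $p$ and $q$. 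Chaining the three identities gives the claim.

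The main obstacle is verifying the first two naturality statements, since neither is recorded explicitly earlier in the paper. The cleanest route is to exploit Corollary~\ref{killtwist}: after trivializing $\omega_Y$ over a suitable Zariski cover, $i_{Y,*}$ becomes the explicit boundary-type isomorphism $H^n_T(T', M_*) \to H^{n+1}_T(\A^1 \times T', M_{*+1})$ from cohomology of $T'$ to cohomology of $\A^1 \times T'$ with support on $T' \times \{0\}$, which is manifestly natural with respect to external products and flat pullbacks. This reduces both compatibilities to routine statements about cohomology with supports for the homotopy module $\KMW_*$, after which the three-step chain above yields the desired commutativity.
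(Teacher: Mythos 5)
Your proposal follows essentially the same route as the paper's proof: both unfold $\circ$ and $\cup$ into the chain external product--diagonal pullback--projection pushforward, reduce to the case of a trivialized $\omega_Y$ (where the two formulas visibly agree, cf.\ Remark~\ref{coincideMorel}), and then deduce the claim from the compatibility of the zero-section pushforward $i_*$ with those three operations. The only differences are organizational: the paper obtains your first two ``naturality statements'' by citing its Lemmas~\ref{crossprodcommute} and~\ref{basechange2} (together with Corollary~\ref{killtwist} for the interaction with $\otimes\theta$) rather than re-deriving them, and it justifies the passage to a cover where $\omega_Y$ is trivial via the injectivity of restriction to open subsets \cite[Lemma~4.6]{CF} combined with a simultaneous shrinking of $V$, $X$ and $Y$ that keeps the supports admissible --- a point your ``suitable Zariski cover'' should spell out.
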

\begin{proof}
Recall that \cite[Lemma~4.6]{CF} yields that for an open embedding $V_0\subset V$ the restriction 
\[
\wCor_{S\cdot S'}(V,Y)\to  \wCor_{(S\cdot S')\cap (V_0\times Y)}(V_0,Y)
\]
is injective, thus it is sufficient to prove the claim for an open subset $V_0$. Note that 
\[
\wCor_{S\cap (V_0\times X)}(V_0,X)= \wCor_{S\cap (V_0\times X_0)}(V_0,X_0)
\]
for every open subset $X_0\subset X$ such that $\pi(S\cap V_0\times X) \subset X_0$ for the projection $\pi\colon V\times X\to X$. Similarly
\begin{gather*}
\wCor_{S'\cap (X_0\times Y)}(X_0,Y)= \wCor_{S'\cap (X_0\times Y_0)}(X_0,Y_0),\\
\wCor_{(S\cdot S')\cap (V_0\times Y)}(V_0,Y)= \wCor_{(S\cdot S')\cap (V_0\times Y_0)}(V_0,Y_0).
\end{gather*}
Thus taking $V_0$ to be sufficiently small, shrinking $X$ to $X_0$ and $Y$ to $Y_0$ accordingly and applying Lemma~\ref{semilocetale} we may assume that $X$ and $Y$ admit \'etale maps to affine spaces.

Let $\theta\colon \omega_Y\xrightarrow{\simeq} \Os_Y$ be the trivialization of $\omega_Y$ given by an \'etale map $Y\to \A^{d'}_k$. Consider the following diagram.
\[
\xymatrix @C=7em{
\wCor_S(V,X)\times H^{d'+1}_{S'}(X\times L_Y,\KMW_{d'+1})\ar[r]^(0.6)\cup & H^{d'+1}_{S\cdot S'}(V\times L_Y,\KMW_{d'+1}) \\
\wCor_S(V,X)\times H^{d'}_{S'}(X,\KMW_{d'}\otimes \omega_Y) \ar[u]_{\id\times i_*}^{\cong} \ar[d]^{\id \times (\otimes \theta)}_{\cong} & \wCor_{S\cdot S'}(V,Y) \ar[u]_{(\id\times i)_*}^{\cong}\\
\wCor_S(V,X)\times H^{d'}_{S'}(X,\KMW_{d'}) \ar[ru]^(0.6)\cup & \wCor_S(V,X)\times\wCor_{S'}(X,Y) \ar[u]^\circ  \ar[l]_(0.45){\id \times (\otimes \theta)}^(0.45){\cong}
}
\]
The bottom triangle commutes: compare the formula from Definition~\ref{coractiondef} with \cite[\S 4.2]{CF} in the case of $\omega_Y\cong \Os_Y$ (see also Remark~\ref{coincideMorel}). The commutativity of the upper half is straightforward, since $i_*$ commutes with all the involved pullbacks (Lemma~\ref{basechange2}), pushforwards (functoriality of pushforward maps) and external products (Lemma~\ref{crossprodcommute}).
\end{proof}

\begin{prop}\label{prop:functorcom} The map $\mathcal{D} \colon \ZF_*(X,Y)\to\wCor(X,Y)$ from Definition~\ref{frcorpairing} gives rise to a functor 
\[
\mathcal{D}\colon \ZF_*(k)\to \wCor_k.
\]
\end{prop}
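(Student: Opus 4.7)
The plan is to verify the two functor axioms. The identity axiom is immediate: for $\id_X \in \Fr_0(X,X)$ the pullback used in Definition~\ref{frcorpairing} is $(\id_X \boxtimes \id_{L_X})^*$, which is the identity on cohomology, so $\mathcal{D}(\id_X) = \id_X^*(1_X) = 1_X$. For composition, fix $a \in \Fr_n(X,Y)$ and $b \in \Fr_m(Y,Z)$; the contravariant functoriality $(b \circ a)^* = a^* \circ b^*$ of the framed pullback on $\wCor(Z,-)$ noted in Definition~\ref{frcorpairing} gives
\[
\mathcal{D}(b \circ a) = (b \circ a)^*(1_Z) = a^*(b^*(1_Z)) = a^*(\mathcal{D}(b)).
\]
Thus the whole proof reduces to the auxiliary identity
\[
a^*(\gamma) = \gamma \circ \mathcal{D}(a) \quad \text{in } \wCor(X,Z)
\]
for every $\gamma \in \wCor(Y,Z)$, to be applied at the end to $\gamma = \mathcal{D}(b)$.

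To prove the auxiliary identity I would translate both sides to cohomology via the untwisting isomorphism $i_*$ of Remark~\ref{rem:untwistcor}. By the very definition of the framed pullback in Definition~\ref{frcorpairing}, the left-hand side becomes $i_*(a^*(\gamma)) = (a \boxtimes \id_{L_Z})^* i_*(\gamma)$. For the right-hand side, since $1_Y \in \wCor(Y,Y)$ is the unit for composition, Lemma~\ref{ourcorcomp} yields both $1_Y \cup i_*(\gamma) = i_*(\gamma)$ and $\mathcal{D}(a) \cup i_*(\gamma) = i_*(\gamma \circ \mathcal{D}(a))$, where $\cup$ is the pairing of Definition~\ref{coractiondef}. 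Feeding the pair $(1_Y, i_*(\gamma))$ into Lemma~\ref{coractionrespectsfr}, with its $V = X = Y$, its $Y = L_Z$, and its $W$ equal to our $X$, produces
\[
(a \boxtimes \id_{L_Z})^*\bigl(1_Y \cup i_*(\gamma)\bigr) = a^*(1_Y) \cup i_*(\gamma) = \mathcal{D}(a) \cup i_*(\gamma).
\]
Combining the three identifications gives $i_*(a^*(\gamma)) = i_*(\gamma \circ \mathcal{D}(a))$, whence the auxiliary identity since $i_*$ is an isomorphism.

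The main obstacle I anticipate is that Lemma~\ref{coractionrespectsfr} requires the middle variety $Y$ to admit a framed enhancement to $\Spec k$, while Lemma~\ref{semilocalgood} only guarantees this Zariski-locally. I plan to handle this via a Zariski-local reduction on $Y$: cover $Y$ by opens $\{Y_j\}$ each admitting such a framed enhancement, verify the identity after restricting $Y$ to each $Y_j$, and assemble the global conclusion using the injectivity of the restriction maps on $\wCor$ along the lines of \cite[Lemma~4.6]{CF}.
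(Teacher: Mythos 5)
Your proposal is correct and follows essentially the same route as the paper: the composition axiom is reduced to the identity $a^*(\gamma)=\gamma\circ\mathcal{D}(a)$ (the paper phrases this as the commutativity of the square $a^*(\gamma\circ\beta)=\gamma\circ a^*(\beta)$, applied with $\beta=1_Y$), which is then obtained by combining Lemma~\ref{coractionrespectsfr} with Lemma~\ref{ourcorcomp} after the same Zariski-local reduction to the case where the middle variety admits a framed enhancement. The only cosmetic difference is that the localization should be described as first shrinking the \emph{source} $X$ (where \cite[Lemma~4.6]{CF} gives injectivity) and then shrinking $Y$ to a neighbourhood of the image of the finite support, exactly as in the opening of the proof of Lemma~\ref{ourcorcomp}, rather than covering $Y$ directly.
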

\begin{proof}
	Let $X,Y,V,W\in \Sm_k$ and $S_1\subset W\times X,\, S_2\subset X\times Y$ be admissible closed subsets. For $a\in\Fr_n(V,W)$ consider the following diagram.
	\[
	\xymatrix{
		\wCor_{S_1}(W,X)\times\wCor_{S_2}(X,Y)\ar[r]^(0.6){\circ}\ar[d]^{a^*\times \id} & \wCor_{S_1\cdot S_2}(W,Y)\ar[d]^{a^*}\\
		\wCor_{a^{-1}(S_1)}(V,X)\times\wCor_{S_2}(X,Y)\ar[r]\ar[r]^(0.6){\circ} & \wCor_{a^{-1}(S_1)\cdot S_2}(V,Y)
	}
	\]
Applying the reasoning similar to the one used in the beginning of the proof of Lemma~\ref{ourcorcomp} we may assume that $X\to \Spec k$ admits a framed enhancement. Then the diagram commutes by Lemmas~\ref{coractionrespectsfr} and~\ref{ourcorcomp}. 
	
	Then for $b\in\ZF_*(Y,Z)$ and $a\in\ZF_*(X,Y)$ we have 
	\[
	\mathcal{D}(b\circ a)=(b\circ a)^*(1_Z)=a^*(b^*(1_Z))=a^*((b^*(1_Z))\circ 1_Y)=b^*(1_Z)\circ a^*(1_Y)=\mathcal{D}(b)\circ \mathcal{D}(a),
	\]
Since $\mathcal{D}$ clearly respects the identity morphisms the claim follows.
\end{proof}

\section{Milnor-Witt transfers on homotopy modules}\label{mwohm}

\begin{dfn}\label{corpairing} Let $X,Y\in\Sm_k$ and $M_*$ be a homotopy module. Then for every admissible closed subset $S\subset Y\times X$ Definition~\ref{coractiondef} gives a pairing
\[
\wCor_S(Y,X)\times M_*(X)\xrightarrow{\cup} M_*(Y)
\]
inducing a pairing
\[
\wCor(Y,X)\times M_*(X)\xrightarrow{\cup} M_*(Y).
\]
\end{dfn}

\begin{lem}\label{cortozfopen}
	In the notation of Definition~\ref{corpairing} let $j\colon Y_0\to Y$ be an open embedding. Then the following diagram commutes.
	\[
	\xymatrix{
		\wCor (Y,X)\times M_*(X)\ar[d]_{j^*\times \id}\ar[r]^(0.65)\cup & M_*(Y)\ar[d]^{j^*}\\
		\wCor (Y_0,X)\times M_*(X)\ar[r]^(0.65)\cup & M_*(Y_0)	
	}
	\]
\end{lem}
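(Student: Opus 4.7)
The plan is to unpack the pairing $\cup$ from Definition~\ref{corpairing} as a three-step composition and then observe that the open restriction $j^*$ commutes with each step. Specialising Definition~\ref{coractiondef} to $V = Y$, $Z = X$, and ``$Y$'' $= \Spec k$, the pairing takes the explicit form
\[
\alpha \cup m \;=\; p_*\bigl(\Delta_X^*(\alpha \times m)\bigr)
\]
for $\alpha \in \wCor_S(Y, X) = H^d_S(Y \times X, \KMW_d \otimes \omega_X)$ (with $d = \dim X$) and $m \in M_*(X)$, where $\Delta_X\colon Y \times X \to Y \times X \times X$ is the partial diagonal and $p\colon Y \times X \to Y$ is the projection. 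Write $p'$ and $\Delta_X'$ for the corresponding morphisms over $Y_0$; for the pullback $j^*$ on $\wCor$ I use the identification with cohomology pullback along $j \times \id_X$.

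I would then verify three commutation identities. First, naturality of the external product gives $(j \times \id_{X \times X})^*(\alpha \times m) = ((j \times \id_X)^*\alpha) \times m$. Second, functoriality of pullbacks applied to the cartesian square relating $\Delta_X$ and $\Delta_X'$ gives $(j \times \id_X)^* \circ \Delta_X^* = \Delta_X'{}^* \circ (j \times \id_{X \times X})^*$. Third—and this is the only non-formal ingredient—the pushforward along the projection commutes with open restriction, namely $j^* \circ p_* = p'_* \circ (j \times \id_X)^*$. This last identity is precisely Lemma~\ref{lem:opencommute} applied to $p\colon Y \times X \to Y$ and the open embedding $j\colon Y_0 \to Y$, using that $S$ is finite over $Y$ so that the pushforward is defined on the relevant support.

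Chaining the three identities produces $j^*(\alpha \cup m) = p'_*\Delta_X'{}^*\bigl((j^*\alpha) \times m\bigr) = (j^*\alpha) \cup m$, which is the desired commutativity. The substantive step is the open-base-change for pushforward supplied by Lemma~\ref{lem:opencommute}; the other two commutations are formal naturality statements for sheaf cohomology. The one minor point of care is to confirm that the pullback $j^*$ on $\wCor$, as defined through MW-composition in \cite[\S 4.2]{CF} (or equivalently via Definition~\ref{frcorpairing} with $j$ viewed as an element of $\Fr_0(Y_0,Y)$), really agrees with the cohomology pullback along $j \times \id_X$ on the model $\wCor_S(Y,X) = H^d_S(Y \times X, \KMW_d \otimes \omega_X)$; this is routine from the composition formula, but it is the one compatibility that a careful argument should not skip.
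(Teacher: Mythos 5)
Your proof is correct and follows the same route as the paper: the paper's own argument is exactly that open restriction commutes with the external product (Lemma~\ref{frcomextprod}), with the diagonal pullback (functoriality), and with the pushforward (Lemma~\ref{lem:opencommute}). The final compatibility you flag is immediate, since the paper defines $j^*$ on $\wCor$ precisely as the cohomology pullback (Definition~\ref{frcorpairing}).
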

\begin{proof}
This follows from the fact that pullback along an open embedding commutes with external products (Lemma~\ref{frcomextprod}), pullbacks (functoriality of pullbacks) and pushforward maps (Lemma~\ref{lem:opencommute}).
\end{proof}

\begin{lem}\label{cortozfonfr}In the notation of Definition~\ref{corpairing} for $V\in \Sm_k$ and $a\in \Fr_n(V,Y)$ the following diagram commutes.
\[
\xymatrix{
\wCor(Y,X)\times M_*(X)\ar[d]_{a^*\times \id}\ar[r]^(0.65){\cup} & M_*(Y)\ar[d]^{a^*}\\
\wCor(V,X)\times M_*(X)\ar[r]^(0.65){\cup} & M_*(V)
}
\]
\end{lem}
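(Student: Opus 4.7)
The pairing of Definition~\ref{corpairing} is the $Z = X$ specialization of Definition~\ref{coractiondef}, and the desired compatibility is the $Z = X$ case of Lemma~\ref{coractionrespectsfr}, modulo its standing hypothesis that $X \to \Spec k$ admits a framed enhancement. The plan is therefore to reduce to this hypothesis by the same sort of argument used at the start of the proofs of Lemma~\ref{ourcorcomp} and Proposition~\ref{prop:functorcom}, after which the verification is essentially identical to that of Lemma~\ref{coractionrespectsfr}.

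Fix $c \in \wCor_S(Y, X)$ for some admissible $S \subset Y \times X$; both sides of the identity $a^*(c \cup m) = (a^*c) \cup m$ live in $M_*(V)$. Since $M_*$ is a strictly homotopy invariant sheaf, the restriction $M_*(V) \to M_*(V_0)$ to any dense Zariski open is injective, so we may shrink $V$ freely. Arguing as in the beginning of the proof of Lemma~\ref{ourcorcomp} --- shrinking $V$ and then using Lemma~\ref{semilocalgood} to replace $X$ by a Zariski open $X_0 \subset X$ containing the (finitely many) projections to $X$ of the generic points of the relevant supports and chosen so that $X_0 \to \Spec k$ admits a framed enhancement --- we reduce to the case where $X \to \Spec k$ itself admits a framed enhancement. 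Excision identifies the cohomology-with-supports groups entering the pairing on $V \times X$ and $Y \times X$ with the corresponding groups on $V \times X_0$ and $Y \times X_0$, and Lemma~\ref{lem:opencommute} ensures that every operation defining the pairing commutes with this open restriction.

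Once this reduction is in place, the pairing unfolds as $c \cup m = p_* \Delta_X^*(c \times m)$, with $p \colon Y \times X \to Y$ and, on the other side, $p' \colon V \times X \to V$, and the identity reduces to the commutativity of three squares: $a^* = (a \boxtimes \id_X)^*$ commutes with the external product by Lemma~\ref{frcomextprod}, with the diagonal pullback $\Delta_X^*$ by functoriality of pullbacks, and with the projection pushforward $p_*$ by Lemma~\ref{basechange2} applied to $\pi_Y \colon Y \times X \to Y$, whose framed enhancement is precisely the one we arranged for $X \to \Spec k$. Stacking the three squares yields the claim. The main obstacle is the reduction step, since the verification itself is a direct assembly of previously proved lemmas; the delicate point is arranging that the supports $S$ and $a^{-1}(S)$ simultaneously fit inside $V \times X_0$ and $Y \times X_0$ for a suitable open $X_0 \subset X$ admitting a framed enhancement, which is accomplished by the familiar combination of sheaf-injectivity on the $V$-side with the properness (in fact finiteness) of the relevant supports.
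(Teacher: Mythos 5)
Your proposal is correct and follows essentially the same route as the paper: reduce via injectivity of $M_*(V)\to M_*(V_0)$ and compatibility of the pairing with open restriction (Lemma~\ref{cortozfopen}), shrink as in the proof of Lemma~\ref{ourcorcomp} so that $X\to\Spec k$ admits a framed enhancement, and then invoke Lemma~\ref{coractionrespectsfr}. The only difference is that you additionally unfold the three-square verification inside Lemma~\ref{coractionrespectsfr}, which the paper simply cites.
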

\begin{proof}
Recall that for every open subset $V_0\subset V$ the restriction $M_*(V)\to M_*(V_0)$ is injective. Hence by Lemma~\ref{cortozfopen} it is sufficient to prove the statement for any open subset $V_0$ of $V$. Applying the reasoning similar to the one used in the beginning of the proof of Lemma~\ref{ourcorcomp} and shrinking $V,X$ and $Y$ we may assume that $X\to \Spec k$ admits a framed enhancement. Then the diagram commutes by Lemma~\ref{coractionrespectsfr}.
\end{proof}


\begin{lem}\label{idok} For $X\in\Sm_k$, a homotopy module $M_*$ and the identity morphism $1_X\in\wCor(X,X)$ the following holds:
	\[
	(1_X\cup -) =\id \colon M_*(X)\to M_*(X).
	\]
\end{lem}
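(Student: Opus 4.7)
Write $e = \dim X$ and let $\Delta\colon X \to X\times X$ denote the diagonal. My plan is to identify $1_X$ explicitly and then unfold the pairing via base change and the projection formula. From the construction of the unit in $\wCor_k$ in~\cite[\S~4]{CF}, the identity $1_X$ coincides with $\Delta_*(1)$, the pushforward of $1\in \KMW_0(X) = \underline{\mathrm{GW}}(X)$ along $\Delta$ under the isomorphism $\Delta_*\colon H^0(X,\KMW_0) \xrightarrow{\cong} H^e_{\Delta_X}(X\times X, \KMW_e\otimes \omega_X)$ of Lemma~\ref{lem:trsupiso}.

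Unpacking Definition~\ref{coractiondef} with $V=X$, $Y=\Spec k$ and $Z=X$, the pairing $1_X\cup m$ is the composition
\[
M_*(X) \xrightarrow{1_X\times(-)} H^e_{\Delta_X\times X}(X^3, M_{*+e}\otimes \omega_X) \xrightarrow{\delta^*} H^e_{\Delta_X}(X\times X, M_{*+e}\otimes\omega_X) \xrightarrow{(\pi_1)_*} M_*(X),
\]
where $\delta\colon X\times X\to X^3$, $(x_1,x_2)\mapsto (x_1,x_2,x_2)$, is the middle diagonal and $\pi_1$ is the first projection. Since both sides are sections of a Nisnevich sheaf, the equality can be checked Zariski locally; shrinking $X$ via Lemma~\ref{semilocetale} we may assume $\Delta$ admits a framed enhancement in the sense of Definition~\ref{goodemb}, making the base change of Lemma~\ref{basechange2} available.

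The computation then proceeds in three steps. First, applying base change to the cartesian square with vertical arrows $\pi_1, \pi_{12}$ and horizontal arrows $\Delta, \Delta\times\id$, together with the projection formula, yields $1_X \times m = (\Delta\times \id)_*(\pi_2^* m)$, where $\pi_2\colon X\times X\to X$ is the second projection. Second, base change applied to the cartesian square
\[
\xymatrix{
X \ar[r]^{\Delta} \ar[d]_{\Delta} & X\times X \ar[d]^{\Delta\times\id}\\
X\times X \ar[r]^{\delta} & X^3
}
\]
gives $\delta^*\circ (\Delta\times \id)_* = \Delta_*\circ \Delta^*$, and since $\Delta^*\pi_2^* = \id^*$, we obtain $\delta^*(1_X\times m) = \Delta_*(m)$ in $H^e_{\Delta_X}(X\times X, M_{*+e}\otimes \omega_X)$. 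Third, by functoriality of the pushforward (Lemma~\ref{pffinite}), $(\pi_1)_*\Delta_*(m) = (\pi_1\circ\Delta)_*(m) = m$, completing the argument.

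The main technical difficulty is the bookkeeping of the various $\omega$-twists and verifying they cancel: the twist $\omega_{(X\times X)/X^3}\cong \pi_1^*\omega_X^{-1}$ associated with $\Delta\times \id$, the twist $\omega_{X/(X\times X)}\cong \omega_X^{-1}$ associated with $\Delta$, and the $\omega_X$-twist on the MW-side must combine correctly so that each intermediate class lives in the claimed group. A secondary point is that Lemma~\ref{basechange2} is stated for base change of the form $(c\boxtimes \id)^*$ versus $(\id\times f)_*$, so applying it to the two squares above requires taking $c$ to be (essentially) the structure map $X\to \Spec k$ and performing a routine factor permutation in $X^3$.
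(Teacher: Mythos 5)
Your reduction and the two identities you aim for --- $\delta^*(1_X\times m)=\Delta_*(m)$ followed by $(\pi_1)_*\Delta_*(m)=m$ --- are exactly the ones the paper's proof establishes, and the final step via Lemma~\ref{pffinite} is fine. The gap is in your Step 2. The base change formula $\delta^*\circ(\Delta\times\id)_*=\Delta_*\circ\Delta^*$ for the cartesian square of the two diagonals is not available in the paper and does not follow from Lemma~\ref{basechange2}: that lemma only covers squares in which the pullback is of the form $(c\boxtimes\id)^*$ acting on one product factor while the pushforward $(\id\times f)_*$ acts on a \emph{disjoint} factor. In your square the pushforward $(\Delta\times\id_X)_*$ involves factors $1,2$ of $X^{\times 3}$ and the pullback $\delta^*=(\id_X\times\Delta)^*$ involves factors $2,3$; they overlap in the middle factor, and no permutation of factors (nor taking $c$ to be the structure map $X\to\Spec k$, which would only produce pullback along a projection) puts this into the shape required by Lemma~\ref{basechange2}. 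A genuinely transverse base change theorem for these Cousin-complex pushforwards is precisely what the paper does not prove.

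The paper's way around this is the ingredient missing from your argument: after shrinking so that $X$ admits an \'etale map $\pi\colon X\to\A^d_k$, it writes down an explicit framed enhancement $a=(\Delta(X),U,\pi(x_1)-\pi(x_2),p_2)\in\Fr_0(X\times X\xleftarrow{\Delta}X)$ of the diagonal and invokes Lemma~\ref{pfcoincide} to replace the pushforward $\Delta_*$ by the pullback $a_\Delta^*$ along an actual morphism of sheaves (up to the twist by $\det a$). Once $\Delta_*(1)$ and the subsequent $(\id_X\times\Delta)^*$ are both pullbacks along morphisms in $\Shvb$, the identity $(\id_X\times\Delta)^*(a_\Delta^*(1)\times m)=a_\Delta^*(m)$ is immediate at the level of representing maps, and then $p_{1*}\Delta_*(m)=m$ by functoriality. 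If you insert this use of Lemma~\ref{pfcoincide} (and justify your Step 1 by a mirrored version of Lemma~\ref{crossprodcommute}, since there the $\KMW$-factor sits on the right whereas in Definition~\ref{coractiondef} it sits on the left), your argument closes; as written, Step 2 rests on an unproven base change.
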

\begin{proof}
	Recall that for every open subset $X_0\subset X$ the restriction $M_*(X)\to M_*(X_0)$ is injective, then, applying Lemmas~\ref{cortozfopen} and~\ref{semilocetale} we may assume that $X$ admits an \'etale map $\pi\colon X \to \A^d_k$. Consider the map 
	\[\phi\colon X\times X\to\A^d_k, \ \ \phi(x_1,x_2)=\pi(x_1)-\pi(x_2).\] 
Since $\pi$ is \'etale, the zero set $Z(\phi)$ is $Z(\phi)=\Delta(X)\sqcup \mathcal{Z}'$ for some closed $\mathcal{Z}'$.	
Then
	\[
	a=(\Delta(X),U,\phi, p_2)\in \Fr_0(X\times X \xleftarrow{\Delta} X)
	\]
	with $U=X\times X-\mathcal{Z}'$ and $p_1,p_2\colon X\times X\to X$ being the projections is a framed enhancement of the diagonal morphism $X\xrightarrow{\Delta} X\times X$. Fix the trivialization of $\omega_X$ induced by $a$.
	
	Recall that $1_X=\Delta_*(1)$, thus we need to check that for every $m\in M_*(X)$ one has
	\[
	p_{1*}(\id_X\times \Delta)^*(\Delta_*(1)\times m) = m
	\]
	in the following diagram.
	\[
	\xymatrix @C=6em{
	H^d_{\Delta(X)}(X\times X, \KMW_d\otimes p_2^*\omega_X) \times M_*(X) \ar[r]^(0.55)\cup  \ar[d]_\times & M_*(X)\\
	H^d_{\Delta(X)\times X}(X\times X\times X, M_{*+d}\otimes p_2^*\omega_X) \ar[r]^(0.55){(\id_X\times \Delta)^*} &
	H^d_{\Delta(X)}(X\times X, M_{*+d}\otimes p_2^*\omega_X) \ar[u]^{p_{1*}}
	}
	\]
	Lemma~\ref{pfcoincide} yields that $\Delta_*(1)=a^*_{\Delta}(1)$ whence 
	\begin{gather*}
	(\id_X\times \Delta)^*(\Delta_*(1)\times m) =(\id_X\times \Delta)^* (a^*_\Delta(1)\times m)= a^*_\Delta(m)=\Delta_*(m),\\
	p_{1*}(\id_X\times \Delta)^*(\Delta_*(1)\times m)=p_{1*}\Delta_*(m)=m.\qedhere
	\end{gather*}
%
\end{proof}

\begin{lem}\label{cortozfoncor}
	In the notation of Definition~\ref{corpairing} for $V\in \Sm_k$ and $\alpha \in\wCor(V,Y)$ the following diagram commutes.
\[
\xymatrix{
\wCor(Y,X)\times M_*(X)\ar[d]_{(\circ\alpha) \times \id}\ar[r]^(0.65)\cup & M_*(Y)\ar[d]^{\alpha\cup }\\
\wCor(V,X)\times M_*(X)\ar[r]^(0.65)\cup & M_*(V)
}
\]
\end{lem}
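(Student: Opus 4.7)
The plan is to show that both sides are given by one and the same zig-zag of external products, diagonal pullbacks, and projection pushforwards applied to $\alpha$, $\beta$ and $m$, after a reduction that allows the application of the base change Lemma~\ref{basechange2}.

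First I would perform the standard shrinking. Since $M_*$ is strictly homotopy invariant, the restriction $M_*(V)\to M_*(V_0)$ is injective for every dense open $V_0\subset V$, so by Lemma~\ref{cortozfopen} it is enough to verify the identity after replacing $V$ by a Zariski open subset. Likewise $Y$ and $X$ may be shrunk (keeping them open enough to retain the relevant admissible supports), exactly as in the opening paragraph of the proof of Lemma~\ref{ourcorcomp}. By Lemma~\ref{semilocalgood} I can arrange in addition that the structure morphisms $Y\to\Spec k$ and $X\to\Spec k$ admit framed enhancements, so that the projections $V\times Y\times X\to V\times X$ and $V\times Y\times X\to V\times Y$ (and their analogs after any base change by $W$) admit framed enhancements as well.

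Next I would unfold both sides. With $d_Y=\dim Y$, $d_X=\dim X$, writing $p_{VX}\colon V\times Y\times X\to V\times X$, $q_Y\colon Y\times X\to Y$, $\pi_V\colon V\times X\to V$ and $\pi'_V\colon V\times Y\to V$ for the obvious projections, and $\Delta_Y,\Delta_X$ for the diagonals, Definition~\ref{coractiondef} together with the description of composition in \cite[\S 4.2]{CF} give
\[
(\beta\circ\alpha)\cup m = (\pi_V)_*\,\Delta_X^*\Bigl(\bigl[(p_{VX})_*(\id_V\times\Delta_Y\times\id_X)^*(\alpha\boxtimes\beta)\bigr]\boxtimes m\Bigr),
\]
\[
\alpha\cup(\beta\cup m) = (\pi'_V)_*\,\Delta_Y^*\Bigl(\alpha\boxtimes\bigl[(q_Y)_*\Delta_X^*(\beta\boxtimes m)\bigr]\Bigr).
\]
The strategy is to massage each expression into
\[
p_*\,(\id_V\times\Delta_Y\times\Delta_X)^*(\alpha\boxtimes\beta\boxtimes m),
\]
where $p\colon V\times Y\times X\to V$ is the structure projection, factored either as $V\times Y\times X\to V\times X\to V$ (for the LHS) or as $V\times Y\times X\to V\times Y\to V$ (for the RHS); functoriality of pushforwards (Lemma~\ref{pffinite}) guarantees that both factorizations give the same map. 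To do so I would use Lemma~\ref{crossprodcommute} to pull the external products with $m$ and with $\alpha$ past the pushforwards $(p_{VX})_*$ and $(q_Y)_*$ respectively, and Lemma~\ref{basechange2} to commute each of the remaining pushforwards past the diagonal pullbacks $\Delta_X^*$ and $\Delta_Y^*$; associativity of external products and functoriality of pullbacks reassemble the result in the desired form.

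The main obstacle will be verifying that each application of base change is legal. Lemma~\ref{basechange2} requires a framed enhancement of the morphism whose pushforward one commutes past a pullback; in the present calculation these morphisms are projections such as $V\times Y\times X\to V\times X$ or $Y\times X\to Y$, which are base changes of the structure maps $Y\to\Spec k$ and $X\to\Spec k$. This is precisely the reason for the reduction in the first paragraph, which provides framed enhancements of the structure morphisms and hence, by the remark after Definition~\ref{goodemb}, of every projection that appears. A minor bookkeeping point is the treatment of the twist by $\omega_Y$ in the definition of $\wCor$, which is handled uniformly by the twisted pushforwards of Definition~\ref{dfn:finpush} and the trivialization invariance recorded in Lemma~\ref{lem:finpushchoice}.
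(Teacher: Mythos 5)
Your proposal is correct and follows essentially the same route as the paper: shrink $V$, $Y$, $X$ so that the relevant structure morphisms admit framed enhancements, express the composition $\circ$ through $\cup$ (external product, diagonal pullback, projection pushforward, as in Lemma~\ref{ourcorcomp}), and then reduce both sides to a common expression using the base change Lemma~\ref{basechange2}, the compatibility of pushforward with external products (Lemma~\ref{crossprodcommute}), and functoriality of pushforwards. The paper leaves the final step as ``a straightforward although rather lengthy computation''; your identification of the common target $p_*(\id_V\times\Delta_Y\times\Delta_X)^*(\alpha\boxtimes\beta\boxtimes m)$ is exactly what that computation amounts to.
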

\begin{proof}
	Applying the reasoning similar to the one in the beginning of the proof of Lemma~\ref{idok} we may assume that $X$ and $Y$ admit \'etale morphisms to the affine spaces of respective dimension. Lemma~\ref{ourcorcomp} yields that $\circ$ can be interpreted in the terms of $\cup$ which is defined using external product, pullback and pushforward. A straightforward although rather lengthy computation shows that the claim follows from Lemmas~\ref{basechange2} and~\ref{crossprodcommute}.
\end{proof}

\begin{dfn} \label{dfn:phimw}
	Let $\HMCor$ denote the category of homotopy modules with MW-transfers, i.e. the category of pairs $(M_*,\phi_*)$ where $M_*$ is a $\Z$-graded homotopy invariant sheaf with MW-transfers and $\phi_i\colon M_i\to (M_{i+1})_{-1}$ are isomorphisms of sheaves with MW-transfers. We usually shorten the notation and refer to $(M_*,\phi_*)$ as $M_*$. Definition~\ref{frcorpairing} (see also Proposition~\ref{prop:functorcom}) gives rise to a functor 
	\[
	\Phi^{\MW}\colon \ShvMW \to \Shvfr
	\] 
	which descends to a functor
	\[
	\Phi^{\MW}\colon \HMCor \to \HMfr.
	\]
	Here $\ShvMW$ and $\Shvfr$ stand for the categories of sheaves with MW-transfers and $\ZF_*$-sheaves respectively. Definition~\ref{corpairing} (see also Lemmas~\ref{idok} and~\ref{cortozfoncor}) gives rise to a functor
	\[
	\Psi^{\MW}\colon \HM \to \ShvMW.
	\]
	The functor $\Psi^{\MW}$ takes an isomorphism of graded sheaves $\phi_*\colon M_*\xrightarrow{\simeq} (M_{*+1})_{-1}$ to an isomorphism  of graded sheaves with MW-transfers, thus it descends to a functor
	\[
	\Psi^{\MW}\colon \HM \to \HMCor.
	\]
\end{dfn}

\begin{dfn}
For $X\in\Sm_k$ denote $\HCor(X)$ the sheaf associated with the presheaf 
	\[
	U\mapsto \coker\left[\wCor(U\times\A^1,X)\xrightarrow{i_0^*-i_1^*}\wCor(U,X)\right].
	\]
	Here $i_0,i_1\colon U\to U\times \A^1$ are given by $i_0(u)=(u,0)$ and $i_1(u)=(u,1)$ respectively. Note that $\HCor(X)$ carries a canonical structure of a presheaf with MW-transfers.
\end{dfn}

\begin{thm}\label{hcor=hzf}
	 For $X\in \Sm_k$ the functor $\mathcal{D}\colon \ZF_*(k)\to\wCor_k$ from Proposition~\ref{prop:functorcom} induces an isomorphism of sheaves
\[
f\colon  \HZF(X)\xrightarrow{\simeq}\HCor(X).
\]

\end{thm}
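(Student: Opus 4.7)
The plan is to construct an explicit inverse $g\colon \HCor(X) \to \HZF(X)$ to $f$ using the universal properties of both sheaves as ``free'' objects in their respective categories of transfers, together with the MW-action on homotopy modules constructed earlier in the paper.

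First, I check that $\mathcal{D}$ descends to $f$. Being a functor (Proposition~\ref{prop:functorcom}), $\mathcal{D}$ commutes with pullback along $i_0,i_1\colon U\to U\times\A^1$, so it preserves the cokernel relation. It is also stabilization-compatible: $\mathcal{D}(\sigma_X) = 1_X$ in $\wCor(X,X)$, by the formula $\mathcal{D}(a) = a^*(1_X)$ of Definition~\ref{frcorpairing} together with the fact that $\sigma_X$ acts as the suspension isomorphism on $\KMW$-cohomology and hence as the identity after applying Remark~\ref{rem:untwistcor}. Sheafifying the induced map of cokernels yields $f$. Next, both sheaves satisfy universal properties that follow from Yoneda-type identifications: $\HZF(X)$ is the free homotopy invariant stable $\ZF_*$-sheaf on $X$ (so $\Hom(\HZF(X),M) \cong M(X)$ for every such $M$, with canonical generator $\tau_X = [\id_X]$), and $\HCor(X)$ is the free homotopy invariant sheaf with MW-transfers on $X$, with canonical generator $1_X$.

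To construct $g$, I would extend $\HZF(X)$ to the framed homotopy module $\HZF(X)_*$ of Definition~\ref{dfn:hzf} and apply the MW-action of Definition~\ref{corpairing} to it. This equips $\HZF(X)$ with a homotopy invariant MW-transfer structure; the module axioms (identity and composition) are precisely Lemmas~\ref{idok} and~\ref{cortozfoncor}. By the universal property of $\HCor(X)$, the canonical class $\tau_X \in \HZF(X)(X)$ then yields a unique MW-sheaf morphism
\[
g\colon \HCor(X) \to \HZF(X), \qquad g(1_X) = \tau_X.
\]

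Finally, $g$ is inverse to $f$: the composition $g\circ f\colon \HZF(X)\to \HZF(X)$ is a morphism of $\ZF_*$-sheaves (here we use Lemma~\ref{cortozfonfr} to see that $g$ is compatible with the framed structure obtained by pulling the MW-structure back along $\mathcal{D}$) sending $\tau_X$ to $g(\mathcal{D}(\id_X)) = g(1_X) = \tau_X$; by the uniqueness in the universal property of $\HZF(X)$, this forces $g\circ f = \id_{\HZF(X)}$. A symmetric argument, using the universal property of $\HCor(X)$, gives $f\circ g = \id_{\HCor(X)}$. The main obstacle is the compatibility statement of Lemma~\ref{cortozfonfr}: one must verify that the MW-structure on $\HZF(X)$ (built via the pairing on $\HZF(X)_*$), when restricted along $\mathcal{D}$, recovers the original framed structure. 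This is what allows $g$ to be recognized as a $\ZF_*$-morphism in the uniqueness step above, and it is where the full machinery developed in Sections~\ref{pfmocohm}--\ref{mwc} enters.
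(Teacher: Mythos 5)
Your proposal matches the paper's proof: the inverse $g$ is constructed as cup product with the class $\overline{\id}_X\in\HZF(X)(X)$ via the MW-action of Definition~\ref{corpairing} on the framed homotopy module $\HZF(X)_*$, and the two compositions are identified with the identity using Lemmas~\ref{cortozfonfr}, \ref{idok} and~\ref{cortozfoncor} --- your Yoneda/universal-property packaging is just the corepresentable form of the paper's element-wise computation. The only point you leave slightly implicit is that the ``symmetric argument'' for $f\circ g=\id$ requires knowing that $f$ itself is a morphism of presheaves with MW-transfers (with $\HZF(X)$ carrying the MW-structure obtained by applying $\Psi^{\MW}$ to $\HZF(X)_*$), which the paper states explicitly.
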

\begin{proof}
	Let $\overline{\id}_X\in\HZF(X)(X)$ be the image of the identity $\id_X\in\ZF(X)(X)$. Then Definition~\ref{corpairing} yields a morphism
	\[
	\wCor(-,X)\xrightarrow{\cup \overline{\id}_X} (\HZF(X)_*)_0.
	\]
	Here $\HZF(X)_*$ is the framed homotopy module given by Definition~\ref{dfn:hzf}. Since the right-hand side is homotopy invariant, it descends to a morphism
	\[
	g\colon \HCor(X)\to \HZF(X)_0=\HZF(X).
	\]
	We need to check that $f$ and $g$ are inverse to each other. For $U\in\Sm_k$, $\overline{a}\in\HZF(X)(U)$ being the image of $a\in\ZF(X)(U)$ Lemmas~\ref{cortozfonfr} and~\ref{idok} yield
	\[
	(g_U \circ f_U) (\overline{a})= a^*(1_X)\cup \overline{\id}_X=a^*(1_X\cup \overline{\id}_X)=a^*(\overline{\id}_X)=\overline{a}
	\]
	whence $g \circ f=\id$. For $\alpha\in \HCor(X)(U)$ we have 
	\[
	(f_U\circ g_U) (\alpha)= (\alpha \cup \overline{\id}_X)^*(1_X)=(f_U(\alpha \cup \overline{\id}_X))\circ 1_X = \alpha \circ f_U(\overline{\id}_X) =\alpha.
	\]
	Here the second equality follows from the fact that the framed structure induced by $\mathcal{D}$ on $\HCor(X)$ agrees with the structure of a presheaf with MW-transfers. The third equality follows from the fact that $f$ can be promoted to a morphism of presheaves with MW-transfers with $\HZF(X)$ being endowed with the structure of a presheaf with MW-transfers applying $\Psi^{\MW}$ to $\HZF(X)_*$. Thus $f\circ g=\id$ and the claim follows.
\end{proof}

\begin{cor} For $X\in\Sm_k$ there is a canonical isomorphism of sheaves
\[
\pi_0(\Sigma_{\PP^1}^\infty X_+)_0\cong \HCor(X).
\]
\end{cor}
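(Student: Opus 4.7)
The plan is essentially a two-step identification, obtained by chaining the main comparison theorem just established with the Garkusha--Panin description of the zeroth stable motivic homotopy sheaf. Since this is a corollary rather than an independent result, I would not introduce any new machinery.

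First I would invoke \cite[Theorem~11.1 and Corollary~11.3]{GP}, which provides a canonical isomorphism of Nisnevich sheaves
\[
\pi_0(\Sigma^\infty_{\PP^1} X_+)_0 \cong \HZF(X).
\]
This identification was already recalled in the introduction immediately after the statement of Theorem~\ref{hcor=hzf}, and it is a purely framed-correspondence statement independent of any MW-structure. Then I would apply Theorem~\ref{hcor=hzf}, which provides the canonical isomorphism
\[
f\colon \HZF(X)\xrightarrow{\simeq}\HCor(X)
\]
induced by the forgetful functor $\mathcal{D}\colon \ZF_*(k)\to \wCor_k$ from Proposition~\ref{prop:functorcom}. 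Composing these two isomorphisms yields the desired canonical identification
\[
\pi_0(\Sigma^\infty_{\PP^1}X_+)_0 \cong \HCor(X).
\]

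There is no real obstacle here, since both isomorphisms are already available and canonical. The only mild issue worth noting is \emph{canonicity}: one should observe that both isomorphisms are induced by structure maps already present in the paper (the framed structure on $\pi_0$ from Lemma~\ref{fronhmodcan}, and the forgetful construction $\mathcal{D}$), so their composition does not depend on any choices. No further argument is needed.
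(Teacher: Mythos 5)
Your proof is correct and matches the paper's own argument exactly: the corollary is obtained by composing the Garkusha--Panin identification $\pi_0(\Sigma^\infty_{\PP^1}X_+)_0\cong \HZF(X)$ with the isomorphism $f\colon \HZF(X)\xrightarrow{\simeq}\HCor(X)$ of Theorem~\ref{hcor=hzf}. Nothing further is needed.
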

\begin{proof}
	Follows from Theorem~\ref{hcor=hzf} combined with the isomorphism
$\pi_0(\Sigma_{\PP^1}^\infty X_+)_0\cong \HZF(X)$ given by~\cite[Theorem~11.1 and Corollary~11.3]{GP}.
\end{proof}

\begin{rem}
The above corollary provides a reasonable description for the zeroth stable motivic homotopy sheaf of $X$. A similar computation with $X$ being a smooth projective variety was obtained by different means in \cite[Theorem~4.3.1]{AH} and \cite[Theorem~1.2]{Ananyevskiy}, see also \cite[Theorem~1.3]{Ananyevskiy} for the case of a smooth curve.
\end{rem}

\begin{cor}\label{cor:addmw}
Let $M$ be a homotopy invariant stable $\ZF_*$-sheaf. Then $M$ admits a unique structure of a presheaf with MW-transfers compatible with the framed structure.
\end{cor}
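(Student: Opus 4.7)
The plan is to prove the corollary by presenting $M$ as a cokernel of a map between sheaves of the form $\HZF(X)$, each of which has a canonical MW-transfer structure via the identification $\HZF(X)\cong \HCor(X)$ of Theorem~\ref{hcor=hzf}, and then transferring the structure along this presentation.

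First I would construct a two-term resolution of $M$ in the category of homotopy invariant stable $\ZF_*$-sheaves. Just as in the proof of Lemma~\ref{frhmsur}, for each $X\in\Sm_k$ and $a\in M(X)$ the element $a$ defines a morphism $\ZF_*(X)\to M$ of $\ZF_*$-presheaves, which factors through $\HZF(X)$ by homotopy invariance and stability of $M$ together with \cite[Theorem~2.1]{GPhi} to give $\rho_a\colon \HZF(X)\to M$. Summing over all $X\in\Sm_k$ and all $a\in M(X)$ yields a surjection of $\ZF_*$-sheaves $\bigoplus_{j\in J_0}\HZF(X_j)\twoheadrightarrow M$, and iterating once on the kernel produces a presentation
\[
\bigoplus_{j\in J_1}\HZF(Y_j)\xrightarrow{d}\bigoplus_{j\in J_0}\HZF(X_j)\to M\to 0.
\]

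Next I would transfer the MW-structure. Each $\HZF(X_j)$ is canonically identified with $\HCor(X_j)$ by Theorem~\ref{hcor=hzf}, and under this identification the underlying $\ZF_*$-structure obtained via $\mathcal{D}$ (Proposition~\ref{prop:functorcom}) agrees with the original framed structure. Every matrix entry of $d$ lies in $\HZF(Y_j)(X_i)\cong\HCor(Y_j)(X_i)$, which under the identification is a MW-correspondence, so $d$ lifts canonically to a morphism of sheaves with MW-transfers. The cokernel of this lifted $d$ endows $M$ with a MW-transfer structure compatible with the framed one by construction, proving existence. For uniqueness, any two compatible MW-transfer structures $\tau_1,\tau_2$ on $M$ force each $\rho_a$ to be a morphism of MW-transfer presheaves; since $\HCor(X)$ is generated as a MW-transfer presheaf by $\id_X$ (and $\rho_a$ is pinned down by $\rho_a(\overline{\id}_X)=a$), the two structures agree on the image of every $\rho_a$, and surjectivity gives $\tau_1=\tau_2$.

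The main obstacle is justifying the lifting of $d$: one must verify that under the isomorphism $\HZF\cong\HCor$ of Theorem~\ref{hcor=hzf}, morphisms of $\ZF_*$-sheaves $\HZF(Y)\to \HZF(X)$ correspond, via $\mathcal{D}$, to morphisms of MW-transfer sheaves $\HCor(Y)\to \HCor(X)$. Morally this is the naturality of the comparison isomorphism, which is built from the pairing $\cup$ of Definition~\ref{corpairing} and the functor $\mathcal{D}$, together with the identity $(1_X \cup -) = \id$ on $\HZF(X)$ (Lemma~\ref{idok}). A related subtlety is that forming the cokernel in MW-sheaves must agree with forming it in $\ZF_*$-sheaves on the underlying sheaf level; this is automatic since the underlying functor from MW-sheaves to $\ZF_*$-sheaves induced by $\mathcal{D}$ is exact on Nisnevich sheaves of abelian groups.
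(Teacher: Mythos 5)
Your argument is correct in substance, but it takes a genuinely longer route than the paper. The paper's proof is essentially a one-step observation: a structure of a presheaf with MW-transfers on $M$ compatible with the framed structure is \emph{by definition} a factorization of the action map $\ZF_*(-,X)\times M(X)\to M$ through $\mathcal{D}\times\id$; since $M$ is homotopy invariant and stable this action descends to $\HZF(X)\times M(X)\to M$, and $\mathcal{D}\colon\HZF(X)\to\HCor(X)$ is an isomorphism by Theorem~\ref{hcor=hzf}, so the factorization exists and is unique. Your uniqueness argument (pinning down $\tau(\beta,m)$ via $\rho_m$ and the class of $\beta$ in $\HCor(X)(U)\cong\HZF(X)(U)$) is really this same observation in disguise. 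Your existence argument, by contrast, goes through a free presentation $\bigoplus\HZF(Y_j)\to\bigoplus\HZF(X_j)\to M\to 0$ and transport of structure; this works, and it buys you one thing the paper's terse proof glosses over, namely that associativity of the resulting MW-action is automatic for a cokernel taken in the category of presheaves with MW-transfers. Two points you should tighten. First, the lifting of $d$ and the assertion that it forgets back to $d$ is exactly the naturality supplied by Lemma~\ref{cortozfonfr} together with Theorem~\ref{hcor=hzf}; you correctly flag this as the main obstacle, and those references close it, so cite them rather than leaving it as ``morally naturality.'' Second, your appeal to exactness of the forgetful functor to identify the two cokernels is shakier than you need: it implicitly requires that Nisnevich sheafification preserves MW-transfers. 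You can avoid this entirely by observing that your presentation is already \emph{section-wise} exact --- for every $V$ and $m\in M(V)$ the summand $\HZF(V)$ indexed by $(V,m)$ hits $m$ via $\rho_m(\overline{\id}_V)$, and likewise for the kernel --- so the presheaf cokernel of the lifted $d$ is literally $M$ and no sheafification is involved.
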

\begin{proof}
	To give $M$ a structure of a presheaf with MW-transfers compatible with the framed structure is the same thing as to construct for every $X\in\Sm_k$ a map $\phi_X$ such that the following diagram commutes
	\[
	\xymatrix{
	\ZF_*(-,X)\times M(X) \ar[r] \ar[d]_{\mathcal{D} \times \id} & M \\
	\wCor(-,X)\times M(X) \ar[ur]_(0.6){\phi_X} & 
}
	\]
	and check associativity relation. Here the horizontal arrow is given by the framed structure and the vertical one is induced by the functor $\mathcal{D}$. Since $M$ is homotopy invariant and stable then it is the same as to construct $\overline{\phi}_X$ for the following diagram.
	\[
\xymatrix{
	\HZF(X)\times M(X) \ar[r] \ar[d]_{\mathcal{D} \times \id} & M \\
	\HCor(X)\times M(X) \ar[ur]_(0.6){\overline{\phi}_X} & 
}
\]
Here the vertical morphism is an isomorphism by Theorem~\ref{hcor=hzf} so the claim follows.

\end{proof}

\begin{thm}\label{hmodeq}
The forgetful functors $\Phifr$ and $\Phi^\MW$ from Definitions~\ref{dfn:phifr} and~\ref{dfn:phimw} respectively establish equivalences of categories 
\[
\HMCor\underset{\simeq}{\xrightarrow{\Phi^\MW}}\HMfr\underset{\simeq}{\xrightarrow{\Phifr}}\HM.
\]
\end{thm}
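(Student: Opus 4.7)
The second equivalence $\Phifr$ is already given by Proposition~\ref{unfrstr}, so I would focus on proving that $\Phi^{\MW}$ is an equivalence. The plan is to exhibit $\Psi^{\MW}\circ \Phifr\colon \HMfr\to \HMCor$ as a quasi-inverse; both directions of the verification ultimately reduce to the uniqueness clauses of Corollary~\ref{cor:addmw} and Proposition~\ref{unfrstr}, together with the isomorphism $\HZF\cong \HCor$ from Theorem~\ref{hcor=hzf}.

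For essential surjectivity, given a framed homotopy module $M_*\in \HMfr$, Corollary~\ref{cor:addmw} endows each degree $M_i$ with a unique presheaf-with-MW-transfers structure compatible with its framed structure. The delicate step is verifying that the grading isomorphisms $\phi_i\colon M_i\to (M_{i+1})_{-1}$ become morphisms of sheaves with MW-transfers, so that the collection assembles into an object of $\HMCor$. Both the source and the target of $\phi_i$ carry the unique compatible MW-structure determined by their framed structure (using that $(-)_{-1}$ is functorial for both framed and MW-transfers and the two constructions intertwine under $\HZF\cong \HCor$), so the framed morphism $\phi_i$ is forced to commute with MW-transfers. For full faithfulness, given a morphism $f\colon \Phi^{\MW}M_*\to \Phi^{\MW}N_*$ in $\HMfr$, the proof of Corollary~\ref{cor:addmw} shows that on a homotopy invariant stable $\ZF_*$-sheaf the MW-action $\HCor(X)\times M(X)\to M$ is obtained from the $\ZF_*$-action $\HZF(X)\times M(X)\to M$ by transporting through the isomorphism $\mathcal{D}\colon \HZF(X)\xrightarrow{\simeq}\HCor(X)$ of Theorem~\ref{hcor=hzf}; consequently any morphism that commutes with $\ZF_*$-transfers automatically commutes with MW-transfers, so $f$ lies in the image of $\Phi^{\MW}$ on hom-sets.

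The quasi-inverse property then follows by combining the above with the two uniqueness statements: the composite $\Psi^{\MW}\circ\Phifr\circ\Phi^{\MW}$ produces the unique MW-structure on a homotopy invariant stable $\ZF_*$-sheaf compatible with the underlying framed structure, which must agree with the original one, yielding $\id_{\HMCor}$; dually, $\Phi^{\MW}\circ\Psi^{\MW}\circ\Phifr$ produces a framed structure on the underlying homotopy module, which by Proposition~\ref{unfrstr} must coincide with the original. The main obstacle I anticipate is the careful bookkeeping around the grading maps $\phi_i$ and their compatibility with MW-transfers, which reduces to checking that the identification $\HZF\cong \HCor$ of Theorem~\ref{hcor=hzf} is natural with respect to the contracting functor $M\mapsto M_{-1}$; this should be a formal consequence of the functoriality of the pairings appearing in Definitions~\ref{KMWmodule} and~\ref{corpairing}, but it will require a careful unwinding of the naturality of $\mathcal{D}$ along the Tate shift.
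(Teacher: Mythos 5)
Your proposal is correct and follows essentially the same route as the paper: $\Phifr$ is an equivalence by Proposition~\ref{unfrstr}, $\Psi^{\MW}\circ\Phifr$ serves as quasi-inverse to $\Phi^{\MW}$, and everything reduces to the uniqueness of the MW-structure compatible with a given framed structure via the identification $\HCor(X)\cong\HZF(X)$ of Theorem~\ref{hcor=hzf}, exactly as in Corollary~\ref{cor:addmw}. The compatibility of the grading maps $\phi_i$ with MW-transfers that you flag as the delicate point is handled in the paper at the level of Definition~\ref{dfn:phimw}, where $\Psi^{\MW}$ is observed to send isomorphisms of graded sheaves to isomorphisms of graded sheaves with MW-transfers.
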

\begin{proof}
The functor $\Phifr$ is an equivalence by Proposition~\ref{unfrstr}. Lemma~\ref{cortozfonfr} yields that 
\[
\Phi^\MW\circ \Psi^\MW \circ \Phifr =\id.
\]
Then it is sufficient to check that if $\Phi^\MW\circ \Phifr(M_*)\cong \Phi^\MW\circ \Phifr(M'_*)$ then $M_*\cong M'_*$, i.e. that a framed homotopy module carries a unique structure of a homotopy module with MW-transfers compatible with the framed structure. The reasoning is the same as in the proof of Corollary~\ref{cor:addmw}. The structure of a presheaf with MW-transfers is given by morphisms of presheaves $\wCor(-,X)\times M_*(X)\to M_*$ which descend to
\[
\HCor(X)\times M_*(X)\to M_*.
\]
The latter one is unique (compatible with the framed structure) since Theorem~\ref{hcor=hzf} yields an isomorphism of sheaves $\HCor(X)\cong \HZF(X)$ and the morphism
\[
\HCor(X)\times M_*(X)\cong \HZF(X)\times M_*(X)\to M_*
\]
is induced by the framed structure $\ZF(X)\times M_*(X)\to M_*$.
\end{proof}

\begin{rem}
Recall that the category of homotopy modules is equivalent to the heart of the homotopy $t$-structure on the stable $\A^1$-derived category $D_{\A^1}(k)$ (see, for example, \cite[Theorem~3.3.3]{AH}). In a similar way one can show that the category of homotopy modules with MW-transfers is equivalent to the heart of the homotopy $t$-structure on the category of MW-motives $\wDM(k)$ introduced in~\cite{DF}.
\end{rem}

\begin{cor} \label{prop:hearts}
The forgetful functor $\wDM(k)\to D_{\A^1}(k)$ induces an equivalence on the hearts of the homotopy $t$-structures.
\end{cor}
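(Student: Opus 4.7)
The plan is to reduce the statement to Theorem~\ref{hmodeq} via the two identifications of the hearts mentioned in the preceding remark. First I would record the equivalences
\[
\HM \simeq \mathrm{D}_{\A^1}(k)^\heartsuit, \qquad \HMCor \simeq \wDM(k)^\heartsuit,
\]
the first being \cite[Theorem~3.3.3]{AH} and the second being obtained by the same argument with $\wCor_k$ replacing the underlying linear structure of $\mathrm{D}_{\A^1}(k)$. Under these identifications an object of the heart on either side is a $\Z$-graded strictly homotopy invariant (Nisnevich) sheaf $M_*$, possibly with extra transfer structure, together with the contraction isomorphisms $\phi_i\colon M_i\xrightarrow{\simeq}(M_{i+1})_{-1}$.

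Next I would check that the forgetful functor $\wDM(k)\to \mathrm{D}_{\A^1}(k)$ is $t$-exact and, on the hearts, agrees (up to natural isomorphism) with the forgetful functor $\Phi^{\MW}\colon \HMCor\to\HM$ of Definition~\ref{dfn:phimw} (after composing with $\Phifr$). This is because the forgetful functor on the derived level is induced by the forgetful functor $\wCor_k\to \Sm_k$ (composed with the canonical map $\Sm_k\to \mathrm{D}_{\A^1}(k)$), and the homotopy $t$-structures on both sides are defined in the same way using the stalks at function fields and the $\Gm$-loop functor; in particular the $\pi_0(-)_*$ construction on $\wDM(k)$ produces exactly the homotopy module with MW-transfers whose underlying homotopy module is the $\pi_0(-)_*$ of the image in $\mathrm{D}_{\A^1}(k)$. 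This is a formal verification of compatibility of two constructions and should be brief.

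Once these two points are in place, the corollary follows immediately from Theorem~\ref{hmodeq}, which states that $\Phi^{\MW}\colon \HMCor\to \HMfr$ and $\Phifr\colon\HMfr\to \HM$ are equivalences, so their composite is too; this composite is precisely the functor between hearts induced by the forgetful $\wDM(k)\to\mathrm{D}_{\A^1}(k)$.

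I expect the only substantive step to be the identification $\HMCor \simeq \wDM(k)^\heartsuit$, which the paper defers to a ``same argument as in \cite{AH}'' remark; carrying this through rigorously requires one to know that $\wDM(k)$ admits a homotopy $t$-structure with analogous Gersten-type description of stalks, for which the strict homotopy invariance and the Gersten-type properties of MW-presheaves established earlier in the paper (via the pushforward/Cousin complex machinery of sections~\ref{pfmocohm} and~\ref{sect:pushgen}) are exactly what is needed. The compatibility of the two $t$-structures with the forgetful functor and the final invocation of Theorem~\ref{hmodeq} are then formal.
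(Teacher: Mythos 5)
Your proposal follows exactly the paper's route: the paper's proof is the one-line ``Follows from Theorem~\ref{hmodeq}'', relying on the preceding Remark that identifies $\HM$ and $\HMCor$ with the hearts of the homotopy $t$-structures on $D_{\A^1}(k)$ and $\wDM(k)$ respectively. Your additional care about $t$-exactness and the compatibility of the forgetful functor with $\Phi^{\MW}\circ\Phifr$ is exactly the (unwritten) content the paper sweeps into that Remark, so the proposal is correct and essentially identical in approach.
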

\begin{proof}
	Follows from Theorem~\ref{hmodeq}.
\end{proof}

\bibliographystyle{plain}

\end{document}